\documentclass[12pt]{amsart} 
\usepackage{amsfonts}
\usepackage{amssymb}
\usepackage{amsmath}
\usepackage{amsthm}
\usepackage{enumerate}
\usepackage{graphicx}
\usepackage[backend=biber, style=alphabetic]{biblatex}
\usepackage[super]{nth}

\usepackage[pdftex,pdfpagelabels,bookmarks,hyperindex,hyperfigures,hidelinks,
            pdfauthor={Thomas Gunnison Brooks},
            pdftitle={3-Manifolds with Constant Ricci Eigenvalues (lambda, lambda, 0)},
            pdfsubject={Riemannian geometry}]{hyperref}

\usepackage[hmargin=1 in, vmargin= 1.25 in]{geometry}

\addbibresource{ms.bib}

\author{Thomas G. Brooks}
\date{\today}

\title{$3$-Manifolds with Constant Ricci Eigenvalues $(\lambda, \lambda, 0)$}

\newcommand{\R}[0]{\mathbb{R}}

\newcommand{\Z}[0]{\mathbb{Z}}
\newcommand{\N}[0]{\mathbb{N}}

\newcommand{\abs}[1]{\left \vert #1 \right \vert}
\newcommand{\brak}[1]{\left [ #1 \right ]}
\newcommand{\paren}[1]{\left ( #1 \right )}
\newcommand{\curly}[1]{\left \{ #1 \right \}}
\newcommand{\norm}[1]{\left \| #1 \right \|}
\newcommand{\chevron}[1]{\left \langle #1 \right \rangle}

\newcommand{\pder}[2]{\tfrac{\partial#1}{\partial#2}}

\DeclareMathOperator{\im}{Im}

\DeclareMathOperator{\Sec}{sec}
\DeclareMathOperator{\Scal}{Scal}

\DeclareMathOperator{\trace}{tr}

\DeclareMathOperator{\grad}{grad}




\newtheorem{theorem}[equation]{Theorem}
\newtheorem*{theorem*}{Theorem}
\newtheorem{prop}[equation]{Proposition}
\newtheorem*{prop*}{Proposition}
\newtheorem{lemma}[equation]{Lemma}
\newtheorem*{lemma*}{Lemma}
\newtheorem{corollary}[equation]{Corollary}
\newtheorem*{corollary*}{Corollary}

\newtheorem*{question*}{Question}

\newtheorem*{problem*}{Problem}

\newtheorem*{conjecture*}{Conjecture}

\theoremstyle{remark}
\newtheorem{remark}[equation]{Remark}
\theoremstyle{definition}
\newtheorem{definition}[equation]{Definition}
\newtheorem{example}[equation]{Example}


\renewcommand\vec{\mathbf}

\begin{document}

\begin{abstract}
    We consider complete Riemannian $3$-manifolds whose Ricci tensors have constant eigenvalues $(\lambda, \lambda, 0)$.
    When $\pi_1$ is finitely generated, we classify the topology of such manifolds by showing that they have a free fundamental group if non-trivial and that every free group is obtained.
    We give a description up to isometry, when the metric is locally irreducible or when it is analytic.
\end{abstract}

\maketitle

A manifold $M$ is \emph{curvature homogeneous} if for any two points $p,q \in M$ there is an isometry $f: T_p M \rightarrow T_q M$ of their tangent spaces such that the curvature tensor $R_p$ is the pullback $f^*(R_q)$ of $R_q$.
The study of curvature homogeneous manifolds originates with \cite{singer} where the following question was posed.
\begin{question*}[Singer]
    Are all curvature homogeneous manifolds locally homogeneous?
\end{question*}
This was answered in the negative in \cite{sekigawa}, see \eqref{eqn:sekigawa_form}.
The following open conjecture provides a related motivation \cite{CN2_book}.

\begin{conjecture*}[Gromov]
Fix a compact manifold $M$ and a curvature tensor $R$.
Then the space of curvature homogeneous metrics on $M$ (up to isometry) which have curvature tensor $R$ is finite dimensional.
\end{conjecture*}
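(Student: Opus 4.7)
The Gromov conjecture is open in general, so any proposal here is speculative; I describe a Singer-style strategy that seems natural. Fix a model curvature tensor $R$ with orthogonal isotropy group $G \subseteq O(n)$. For each curvature homogeneous metric on $M$ with pointwise curvature model $R$, the set of frames in which the curvature tensor equals $R$ forms a reduction of the orthonormal frame bundle to $G$. The metric up to isometry is then encoded in this $G$-reduction up to bundle isomorphism over a diffeomorphism of $M$, converting the problem to one about moduli of $G$-structures on a fixed compact manifold.

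Next I would incorporate the higher jets. As in Singer's original finiteness theorem for locally homogeneous spaces, the common pointwise stabilizer $G_k$ of $(R, \nabla R, \dots, \nabla^k R)$ ought to stabilize at some $k$ depending only on $R$, giving a $G_k$-reduction of the frame bundle. If this reduction is of finite type in Cartan's sense, its automorphism pseudogroup is finite dimensional; combined with compactness of $M$ and a standard deformation argument for integrable $G$-structures of finite type, this would bound the dimension of the moduli space.

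The main obstacle is that curvature homogeneity prescribes only $R$, not any of the $\nabla^k R$. These higher jets may genuinely vary both with the metric and across $M$, as the Sekigawa example \eqref{eqn:sekigawa_form} demonstrates, so the proposed $G_k$-reduction really depends on extra functional data that differ between metrics. The hard part will be showing that the Bianchi identities, viewed as an overdetermined PDE system for this data, have only a finite-dimensional solution space once $M$ is compact. In the specific setting of the present paper, with Ricci eigenvalues $(\lambda, \lambda, 0)$ in dimension three, one should hope instead to verify the conjecture directly from the forthcoming classification up to isometry: the universal cover will be tightly constrained, and the moduli of compact quotients ought to reduce to a representation-variety question for the (free) fundamental group identified in the abstract, bypassing the general $G$-structure machinery entirely.
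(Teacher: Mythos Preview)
The paper does not prove this statement. The Gromov conjecture is presented in the introduction purely as motivation, explicitly labelled as an \emph{open conjecture}, and the paper never returns to it. There is therefore no ``paper's own proof'' to compare your proposal against.

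You correctly recognize this in your first sentence. Your Singer-style sketch is a reasonable account of why one might hope the conjecture is true, and you honestly identify the obstruction: curvature homogeneity fixes only the zeroth-order jet of $R$, so the higher $\nabla^k R$ vary freely and the $G_k$-reduction is not determined. That is indeed the essential difficulty, and nothing in the paper resolves it.

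Your final paragraph suggests verifying the conjecture in the specific $(\lambda,\lambda,0)$ case treated here. Note, however, that the paper's own results actually run \emph{against} a naive reading of finite-dimensionality: Theorem~\ref{thm:continuous_coords} and Theorem~\ref{thm:irreducible_examples} parametrize the complete simply connected examples by essentially arbitrary pairs of functions $(f,h)$, giving an infinite-dimensional moduli space in the non-compact setting. The conjecture is only about \emph{compact} $M$, and the paper shows (Theorem~\ref{thm:pi1}) that when $\widetilde M$ is irreducible the fundamental group is free, hence $M$ is never compact. So the paper's classification does not produce a test case for Gromov's conjecture; it sidesteps it by showing no compact examples arise in this curvature class outside the reducible $\mathbb{H}^2\times\R$ quotients.
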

The Sekigawa examples give two infinite dimensional moduli spaces of complete curvature homogeneous metrics, but these are not compact.

Since, in dimension 3, the Ricci tensor determines the curvature tensor, curvature homogeneous manifolds are those manifolds whose Ricci eigenvalues are constant. 
Such manifolds are well studied locally \cite{distinct_ricci_eigs, CN2_book, constant_principal_curvatures}.
In this paper, we study the global behavior of the special case where the Ricci eigenvalues are $(\lambda, \lambda, 0)$.
This is the only case where $M$ can be isometric to a product metric locally.
Since some regions may be locally irreducible while others are locally a product metric, this condition allows for a large class of examples differing in geometry as well as topology.

By scaling, we will assume from now on that $\lambda = -1, 0,$ or $1$.
If $\lambda = 0, 1$ and the metric is complete, then it is well known that the universal covers of such a manifold splits isometrically as $\mathbb{E}^2 \times \R$ or $\mathbb{S}^2 \times \R$, see Lemma~\ref{lemma:c_stays_nilpotent}.
Hence we will assume that $\lambda = -1$.

Apart from quotients of $\mathbb{H}^2 \times \R$, previous examples of complete manifolds with Ricci eigenvalues $(-1,-1,0)$ were simply connected \cite{KTV_curv_homog, szabo2}.
We classify the fundamental group, under the assumption that it is finitely generated, which classifies their topology since they are Hadamard manifolds .

\begin{theorem}
    \label{thm:pi1}
    Suppose that $M$ is complete and has Ricci eigenvalues $(-1,-1,0)$ and $\pi_1(M)$ is finitely generated.
    If its universal cover $\widetilde M$ is irreducible, then $\pi_1(M)$ is a free group.
    If $M$ is locally irreducible everywhere, then $\pi_1(M)$ is either trivial or $\Z$.
    Moreover, any countably generated free group is obtained as $\pi_1(M)$ of some such metric.
\end{theorem}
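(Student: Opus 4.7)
The plan has a common setup followed by treating the three assertions in turn.

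From the Ricci eigenvalues $(-1,-1,0)$ in dimension $3$, a direct linear-algebra calculation gives the sectional curvatures: writing $e_3$ for a unit $0$-eigenvector of $\mathrm{Ric}$ and $e_1, e_2$ for an orthonormal basis of the $(-1)$-eigenspace, one obtains $K(e_1, e_2) = -1$ and $K(e_1, e_3) = K(e_2, e_3) = 0$. In particular $M$ has nonpositive sectional curvature, so by Cartan--Hadamard $\widetilde M \cong \R^3$ and $M$ is aspherical. The $0$-eigenspace defines a canonical line field $\mathcal L$ on $M$, and because $K(\cdot, \mathcal L) \equiv 0$, a Jacobi-field computation along its integral curves shows these curves are complete geodesics. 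So $\mathcal L$ integrates to a geodesic foliation $\mathcal F$ of $M$, whose leaves lift to properly embedded copies of $\R$ in $\widetilde M$.

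For the free-group assertion, I plan to exploit the $\pi_1(M)$-action on the leaf space of $\mathcal F$. Because $\mathcal L$ is determined by the Ricci tensor, deck transformations preserve $\mathcal F$. I would first show that the leaf space $\widetilde M/\mathcal F$ is Hausdorff---hence a simply connected $2$-manifold, and thus $\R^2$---and that the $\pi_1(M)$-action descends to an action on $\R^2$ with quotient a $2$-orbifold $\Sigma$. The hypothesis that $\widetilde M$ is irreducible rules out a simultaneous trivialization giving $\widetilde M \cong \mathbb{H}^2 \times \R$, so $\Sigma$ is non-compact. Since $M$ is homotopy equivalent to $\Sigma$ and any non-compact surface (or open $2$-orbifold) has free fundamental group, $\pi_1(M)$ is free. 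The main obstacle I anticipate is the leaf-space step: one must verify the leaf space is Hausdorff and handle deck transformations that fix a leaf set-wise and translate it; this is where irreducibility of $\widetilde M$ must be mobilized.

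For the locally-irreducible case, nowhere integrability of $\mathcal L^\perp$ precludes any $\pi_1$-invariant integrable $2$-plane distribution containing $\mathcal L$; more strongly, I expect it to force the isometry group of $\widetilde M$ to be essentially the $1$-parameter flow along $\mathcal L$. Every $g \in \pi_1(M)$ preserves $\mathcal L$ and, under this rigidity, acts as a leaf-preserving translation, so $\pi_1(M)$ embeds as a discrete subgroup of $\R$, giving $\pi_1(M) \in \{1, \Z\}$.

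For realization, I use Schottky constructions: given a countable rank $r$, pick a Schottky subgroup $\Gamma \le \mathrm{Isom}(\mathbb{H}^2)$ of rank $r$ (infinite-rank Schottky groups exist and act freely, properly discontinuously on $\mathbb{H}^2$). Then $(\mathbb{H}^2/\Gamma) \times \R$ is complete with Ricci eigenvalues $(-1,-1,0)$ and $\pi_1 \cong \Gamma$, realizing every countably generated free group.
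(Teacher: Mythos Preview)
Your setup misses the paper's central geometric object. The relevant foliation $\mathcal{F}$ is not by the nullity geodesics (integral curves of $T$) but by totally geodesic flat \emph{planes} spanned by $T$ and $e_1 \in \ker C$, where $C(X)=-\nabla_X T$ is the splitting tensor; this codimension-one foliation lives on $M_{irred}$ and drives everything (the invariant $A$, the no-fixed-leaf Lemmas~\ref{lemma:two_fixed_planes}--\ref{lemma:no_fixed_nonsplit_regions}, and the Lyndon length function). Your leaf-space-of-lines approach has a concrete failure mode you do not address: in a split region $\Sigma\times\R\subset\widetilde M$, any isometry of $\Sigma$ fixing a point (composed with a translation in $\R$) acts freely on $\widetilde M$ yet fixes a nullity line, so the induced action on your $2$-dimensional leaf space has stabilizers and the quotient is a genuine orbifold; the assertion ``$M\simeq\Sigma$ and open $2$-orbifolds have free $\pi_1$'' then breaks down. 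Irreducibility of $\widetilde M$ does not visibly prevent this, and the paper needs a quite different mechanism---an integer-valued Lyndon length function $N(g)$ counting plane-leaves separating $p_0$ from $g(p_0)$, combined with a recursive refinement and Grushko's theorem---to force freeness.

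Your Part~2 expectation is backwards. In the locally irreducible case the flow along $T$ is \emph{not} by isometries: in the coordinates~\eqref{eqn:g_curv_homog} the term $(du - f(x)v\,dx)^2$ depends on $v$ whenever $f\neq 0$. The nontrivial $\Z$-actions that do occur translate in the transverse $x$-direction (when $f,h$ are periodic), and the paper's argument (Lemma~\ref{lemma:pi1_irreducible}) sends the space of plane-leaves injectively to $\R$ via $A$ and shows $\pi_1(M)$ acts there freely and properly discontinuously. Finally, your realization $(\mathbb{H}^2/\Gamma)\times\R$ has reducible universal cover, so it does not witness sharpness of the first assertion; the paper instead glues genuinely non-split strips (built from~\eqref{eqn:g_curv_homog} with $f\not\equiv 0$) onto a split core to get an irreducible example with $\pi_1=F_2$, then passes to covers.
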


Consider now the locally irreducible manifolds with Ricci eigenvalues $(-1, -1, 0)$.
\begin{theorem}
\label{thm:continuous_coords}
    Suppose that $M$ is a complete, simply connected manifold with Ricci eigenvalues $(-1, -1, 0)$ such that $M$ is locally irreducible everywhere.
    Then there exist  coordinates $(x,u,v)$ on $M$, which are locally Lipschitz and smooth in an open, dense subset on which the metric has the form, 
    \begin{equation}
        \label{eqn:g_curv_homog}
        g = (\cosh u - h(x) \sinh u)^2 dx^2 + (du - f(x) v \; dx)^2 + (dv + f(x) u \; dx)^2,
        \tag{$\star$}
    \end{equation}
    for some functions $f: \R \rightarrow \R$, $h: \R \rightarrow [-1,1]$ where $f$ is $C^{1,1}$ and $h$ is the derivative of a Lipschitz function.
    Moreover, $f,h$ are determined up to sign by a choice of a base-point in $M$.
\end{theorem}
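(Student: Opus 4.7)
The plan is to build a canonical orthonormal frame $(E_1, E_2, N)$ intrinsic to the geometry, show that it defines a foliation by totally geodesic flat $2$-planes, and then read off $(\star)$ in coordinates adapted to this foliation together with a transversal curve from $p_0$.

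Since $M$ is simply connected, the kernel of $\mathrm{Ric}$ gives a global unit field $N$, unique up to sign. The three-dimensional formula expressing $R$ from $\mathrm{Ric}$, together with the eigenvalue hypothesis, yields $R(X,Y)N = 0$ for all $X,Y$ and $R(X,N)Y = 0$ for $X, Y \in \mathcal{D} := N^{\perp}$; in particular $\nabla_N N = 0$, and the Weingarten endomorphism $S := \nabla N|_{\mathcal{D}} \colon \mathcal{D} \to \mathcal{D}$ satisfies the Riccati equation $\nabla_N S = -S^2$ along every nullity geodesic. The first key step is to force $S$ to be pointwise nilpotent: completeness of $M$ rules out real nonzero eigenvalues via the explicit solution $S(t) = S_0(I + tS_0)^{-1}$, while the Codazzi identity $(\nabla_X S)Y = (\nabla_Y S)X$ on $\mathcal{D}$ (a consequence of $R(X,Y)N = 0$), propagated transversally to the nullity, excludes complex-conjugate eigenvalues. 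Local irreducibility then precludes $S \equiv 0$ on any open set, so $\ker S$ is a canonical line in $\mathcal{D}$; I take $E_2$ unit in $\ker S$ and $E_1$ the oriented orthonormal complement in $\mathcal{D}$, determining $(E_1, E_2, N)$ up to three sign choices.

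Differentiating $S(E_2) = 0$ along $N$ and applying $\nabla_N S = -S^2$ will yield $\nabla_N E_2 \in \ker S$, hence $\nabla_N E_2 = 0$ since $|E_2| = 1$; a similar argument will give $\nabla_{E_2} E_2 = 0 = \nabla_{E_2} N$. Therefore $\mathcal{F} := \langle E_2, N\rangle$ is involutive with totally geodesic leaves, each intrinsically flat since $K(E_2, N) = 0$. Let $\gamma(x)$ be the integral curve of $E_1$ through $p_0$ parametrized by arclength $x$ (in general not a geodesic). For each $x$, endow the leaf $\Sigma_x$ of $\mathcal{F}$ through $\gamma(x)$ with Cartesian coordinates $(u,v)$ centered at $\gamma(x)$ and aligned with $E_2, N$, producing the global coordinate system $(x,u,v)$. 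Define the intrinsic scalar functions
\[
    h(x) := g(\nabla_{E_1}E_1,\,E_2)|_{\gamma(x)}, \qquad f(x) := -\,g(S(E_1),\,E_2)|_{\gamma(x)},
\]
namely the geodesic curvature of $\gamma$ and the off-diagonal entry of $S$ along $\gamma$; the three sign ambiguities in $(E_1, E_2, N)$ correspond precisely to those in $(h,f)$.

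Finally, $(\star)$ is obtained by computing $g$ in these coordinates. The flat Cartesian structure on each leaf yields $du^2 + dv^2$; the cross terms $-fv\,du\,dx$ and $fu\,dv\,dx$ record the infinitesimal rotation of $(E_2, N)$ within $\Sigma_x$ as $x$ advances, at angular rate $f(x)$. For the $dx^2$-coefficient, the canonicity of the frame together with $S(E_2) = 0$ forces the connection forms to take the shape $\omega^1_2 = \mu\,\theta^1$, $\omega^1_3 = 0$, $\omega^2_3 = -\nu\,\theta^1$ in the dual orthonormal coframe; the second structure equation $d\omega^1_2 = -\theta^1 \wedge \theta^2$ then restricts on each leaf to the scalar ODE $\partial_u \mu = 1 - \mu^2$ with initial data $\mu(x,0) = -h(x)$ along $\gamma$, whose solution $\mu = (\sinh u - h\cosh u)/(\cosh u - h\sinh u)$ integrates to the factor $(\cosh u - h\sinh u)^2$ multiplying $dx^2$. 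The main obstacle is the first step, proving pointwise nilpotency of $S$; the Lipschitz-but-not-smooth regularity of the coordinates reflects that the canonical sign choice for $E_2$ may be forced to flip across codimension-one loci of $M$, yielding $h \in L^{\infty}$ bounded by $1$ and $f \in C^{1,1}$ rather than smooth functions.
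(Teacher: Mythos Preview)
Your overall architecture---build the canonical frame from $\ker S$, obtain totally geodesic flat leaves, run a transversal integral curve of $E_1$ through $p_0$, and read off $(\star)$---is exactly the paper's strategy (your $N, E_2, E_1$ are the paper's $T, e_1, e_2$). Two of your justifications, however, are not correct as written.

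First, the nilpotency step. Completeness along nullity geodesics does exclude real nonzero eigenvalues of $S$, but ``the Codazzi identity propagated transversally'' does not by itself force $\det S = 0$, and you have not said how it would. The paper's argument is different and uses the constancy of the scalar curvature: differentiating $\mathrm{Scal}$ along $N$ via the second Bianchi identity gives $\mathrm{Scal}' = -2(\operatorname{tr} C)\,\mathrm{Scal}$, so $\operatorname{tr} C = 0$; then Riccati gives $0 = (\operatorname{tr} C)' = \operatorname{tr}(C^2) = (\operatorname{tr} C)^2 - 2\det C$, hence $\det C = 0$. Second, your account of the Lipschitz-but-not-smooth regularity is wrong. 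The issue is not a sign flip of $E_2$: on the locus $\{S=0\}$ one has $\ker S = \mathcal{D}$, so there is no canonical line whatsoever, and local irreducibility only guarantees that this locus has empty interior, not that it is empty. Your frame is simply undefined there. The paper handles this by first showing (Lemma~\ref{lemma:cont_foliation}) that the leaves of $\mathcal{F}$ extend continuously across $\{S=0\}$ via a separating-plane argument, then invoking Zeghib's theorem that codimension-one totally geodesic foliations are Lipschitz; only after that does Picard--Lindel\"of produce a $C^{1,1}$ transversal $\gamma$. The bound $|h|\le 1$ likewise needs an argument you have not supplied: along an $E_2$-integral curve, $1/a$ satisfies the Jacobi equation with curvature $-1$, and $|\beta|>1$ would force a finite-time blowup (Lemma~\ref{lemma:c_stays_nilpotent}). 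Finally, you have not addressed why $\gamma$ is defined on all of $\R$ and why $(x,u,v)\mapsto \exp_{\gamma(x)}(uE_2+vN)$ is a global bijection onto $M$; this is the content of Propositions~\ref{prop:g_curv_homog} and~\ref{prop:plane_bundle} and requires a separate convexity and monotonicity argument.
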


The following theorem gives a partial converse to this.
\begin{theorem}
    \label{thm:irreducible_examples}
    Let $f: \R \rightarrow \R$ be smooth and $H: \R \rightarrow \R$ be Lipschitz with Lipschitz constant 1 and smooth on an open dense set $S \subset \R$.
    Let $h = H'$ and assume that
    \begin{equation}
    \label{eqn:f_h_property}
    f^{(k)}(x) h^{(\ell_1)}(x) \cdots h^{(\ell_m)}(x) \rightarrow 0
    \end{equation}
    on $\R \setminus S$, for any $m, k, \ell_i \geq 0$.

    Then a metric $g$ of the form~\eqref{eqn:g_curv_homog} in Lipschitz coordinates $(x, u, v)$ is a smooth, complete metric and has Ricci eigenvalues $(-1, 1, 0)$.
\end{theorem}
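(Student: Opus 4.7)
The plan is to verify three separate assertions about the metric $g$ defined by \eqref{eqn:g_curv_homog}: that $g$ is smooth as a tensor field on $M = \R^3$, that $g$ is geodesically complete, and that its Ricci eigenvalues are $(-1,1,0)$ as asserted. Since the curvature calculation is local and essentially algebraic, I would carry it out first, restricting initially to the open dense set where $x \in S$, on which $(x,u,v)$ is a genuine smooth chart.

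On $\{x \in S\}$, the one-forms
\[
\omega^1 = (\cosh u - h(x)\sinh u)\, dx,\quad \omega^2 = du - f(x)v\, dx,\quad \omega^3 = dv + f(x)u\, dx
\]
form a smooth orthonormal coframe. I would compute each $d\omega^i$ in the coordinate basis, re-express the result in the $\omega^j$ basis, and solve Cartan's first structure equation $d\omega^i + \omega^i_j \wedge \omega^j = 0$ for the Levi-Civita connection 1-forms $\omega^i_j$ (using skew-symmetry $\omega^i_j = -\omega^j_i$). Substituting into the second structure equation yields the curvature 2-forms $\Omega^i_j$ and thus the sectional curvatures of the three coordinate planes; collecting the diagonal entries of the Ricci tensor in this frame should produce the claimed constant spectrum $(-1,1,0)$, independent of $f$ and $h$. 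Constancy of the answer is itself the indication that the calculation has been done correctly, and that the apparent dependence on $f'$ and $h$ must cancel.

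The most delicate step is showing that $g$ is a smooth tensor field on all of $M$. On $\{x \in S\}$ this is automatic. Near a point $x_0 \notin S$ the chart $(x,u,v)$ is only Lipschitz in $x$, so one cannot read smoothness off of the formula \eqref{eqn:g_curv_homog} directly. The hypothesis \eqref{eqn:f_h_property} is tailored exactly for this: every monomial in the derivatives of $f$ and $h$ tends to zero on $\R \setminus S$. I would argue that every component of $g$, of the Levi-Civita connection, and of every covariant derivative of the curvature tensor is, when expressed in this chart, a polynomial in such monomials and therefore extends continuously (indeed, with vanishing value) across $\R \setminus S$. One then builds a smooth atlas on $M$ by transporting a smooth orthonormal frame along geodesics transverse to $\R \setminus S$ and checking that the transition functions with the Lipschitz chart on $\{x \in S\}$ extend smoothly, using the vanishing of all relevant jets at the boundary.

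Once smoothness is in hand, geodesic completeness is a comparison argument. Because $|h| \leq 1$, the factor $(\cosh u - h\sinh u)^2$ is bounded below by $e^{-2|u|}$, and on any strip $|x| \leq R$ the full metric $g$ is two-sidedly bounded by an explicit complete Riemannian metric on $\R^3$ whose completeness is standard; no geodesic can leave every compact set in finite arclength. The main obstacle is clearly the smoothness step: both the Ricci computation and the completeness estimate are routine, but extracting honest $C^\infty$ regularity of a tensor field from the algebraic decay condition \eqref{eqn:f_h_property}, in a chart that is a priori only Lipschitz, is where the real content lies and where I would expect to spend most of the technical effort.
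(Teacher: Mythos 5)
Your frame computation of the Ricci eigenvalues on $\{x \in S\}$ is fine (it reproduces Lemma~\ref{lemma:g_complete}(a)), and your completeness estimate is essentially the argument of Lemma~\ref{lemma:g_complete}(f). But the smoothness step, which you yourself flag as the crux, does not work as outlined, and a genuinely different idea is needed there.

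You claim that every component of $g$ in the Lipschitz chart $(x,u,v)$ is a polynomial in the decaying monomials of \eqref{eqn:f_h_property} and therefore extends continuously with vanishing value across $\R\setminus S$. This is false already at the level of the metric itself: $g_{xx} = (\cosh u - h(x)\sinh u)^2 + f(x)^2(u^2+v^2)$ contains the term $(\cosh u - h(x)\sinh u)^2$, which carries no factor of any $f^{(i)}$ and so is not controlled by \eqref{eqn:f_h_property}; indeed it need not even be continuous in $x$, since $h$ is permitted to jump (see Examples~\ref{example:irreducible_2} and~\ref{example:irreducible_4}). Its $x$-derivative produces a term proportional to $h'(x)$ with no compensating $f$, which the hypothesis gives no grip on. (And of course $g_{uu}=g_{vv}=1$ do not vanish.) The later step of "building a smooth atlas by transporting a smooth orthonormal frame along geodesics" is also circular: you would need $g$ to be smooth before its exponential map or parallel transport is smooth.

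The paper resolves both problems at once by fixing the ambient smooth manifold to be $\mathbb{H}^2\times\R$ with its product metric $g_{\mathbb{H}^2\times\R}$, realizing $(x,u,v)$ as Fermi-type coordinates $(\exp_{\gamma(x)}(u\,e_1),\,v)$ along the $C^{1,1}$ curve $\gamma$ of Proposition~\ref{prop:foliation_H_lipschitz}, and writing $g = g_{\mathbb{H}^2\times\R} + g_f$. The troublesome factor $(\cosh u - h\sinh u)^2$ is then entirely absorbed into the background $g_{\mathbb{H}^2\times\R}$, which is smooth by construction and merely \emph{looks} irregular in the Lipschitz chart. Only the correction $g_f$, given in \eqref{eqn:g_f}, need be examined, and every component of $g_f$ against the smooth $g_{\mathbb{H}^2\times\R}$-orthonormal frame $\{e_1,e_2,e_3\}$ carries a factor of some $f^{(i)}$; the paper then shows by induction using \eqref{eqn:start_cov_derivs}--\eqref{eqn:end_cov_derivs} that all iterated frame derivatives of these components retain such a factor, hence by \eqref{eqn:f_h_property} extend by zero to a $C^\infty$ tensor on all of $\mathbb{H}^2\times\R$. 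Extracting the background metric and applying the decay condition only to the correction $g_f$ is the essential idea missing from your proposal; a direct analysis of the components of $g$ in the Lipschitz chart cannot succeed.
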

Notice that we do not require that $h$ to even be continuous or well-defined outside of $S$.
We give examples where $h$ is non-continuous and the coordinates $(x, u, v)$ are non-smooth on a Cantor set, although the metric $g$ is $C^\infty$.

As a consequence of these theorems we obtain a complete description of the case where $M$ is analytic.
\begin{corollary}
\label{cor:analytic}
Suppose $M$ is complete, analytic, and irreducible, and its Ricci eigenvalues are $(-1, -1, 0)$.
Then there exist analytic functions $f: \R \rightarrow \R$, $h: \R \rightarrow [-1, 1]$ so that $M$ is isometric to a metric as in \eqref{eqn:g_curv_homog}.
Conversely, for any such $f, h$,~\eqref{eqn:g_curv_homog} has Ricci eigenvalues $(-1, -1, 0)$, is complete and analytic, and is irreducible unless $f$ is identically zero.
\end{corollary}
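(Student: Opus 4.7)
The plan is to derive Corollary~\ref{cor:analytic} from Theorems~\ref{thm:continuous_coords} and~\ref{thm:irreducible_examples}, with the main addition being an analyticity upgrade for the functions $f$ and $h$ in the forward direction.

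For the forward direction, I would first pass to the universal cover $\widetilde M$, which remains analytic, complete, and irreducible, to reduce to the simply connected case, and apply Theorem~\ref{thm:continuous_coords} to obtain global coordinates $(x,u,v)$ smooth on an open dense $U \subset \widetilde M$, together with an \emph{a priori} only $C^{1,1}$ function $f$ and an $L^\infty$ function $h$. The crucial step is to upgrade $f$ and $h$ to real-analytic. On $U$ the metric has the explicit form~\eqref{eqn:g_curv_homog}, so inspecting its components and low-order derivatives at $u=v=0$ yields pointwise formulas such as $h(x) = -\tfrac{1}{2}\partial_u g_{xx}|_{u=v=0}$ and $f(x) = \partial_u g_{xv}|_{u=v=0}$. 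Since $g$ is analytic, these expressions are real-analytic in $x$ on the projection $\pi_x(U) \subset \R$, which is open and dense because $U$ is invariant under the flow of the Ricci-zero direction. Continuity of $f$ (from Theorem~\ref{thm:continuous_coords}) combined with analyticity on an open dense set forces $f$ to be analytic on all of $\R$, and a parallel argument, once $h$ is first upgraded from $L^\infty$ to continuous on $\pi_x(U)$ via its pointwise formula, handles $h$.

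For the converse, given analytic $f: \R \to \R$ and $h: \R \to [-1,1]$, analyticity implies smoothness, so the dense open set $S$ of Theorem~\ref{thm:irreducible_examples} can be taken to be all of $\R$ and the limit condition~\eqref{eqn:f_h_property} is vacuous. That theorem then delivers completeness, smoothness, and the Ricci eigenvalues $(-1,-1,0)$; analyticity of $g$ is immediate from its formula. For the irreducibility claim, if $f \equiv 0$ the metric reduces to $(\cosh u - h(x)\sinh u)^2\,dx^2 + du^2 + dv^2$, a global Riemannian product with the $v$-line, hence reducible. If $f \not\equiv 0$, analyticity makes $f$ non-vanishing on an open dense subset; on that subset I would exhibit a component of $\nabla R$, or equivalently a bracket-type invariant involving $[\partial_u, \partial_v]$, that is a nonzero scalar multiple of $f$, ruling out any parallel rank-$1$ or rank-$2$ subbundle and thus any local de~Rham splitting.

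The main obstacle is the irreducibility claim in the converse: the computation needs to produce a clean geometric invariant that is manifestly proportional to $f$ on $\{f \neq 0\}$, which requires some care in choosing an orthonormal frame adapted to the Ricci decomposition. The forward-direction analyticity upgrade, by contrast, is largely formal once the pointwise formulas for $f$ and $h$ are recorded, with the only minor subtlety being the density of $\pi_x(U)$ in $\R$.
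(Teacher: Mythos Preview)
Your forward direction contains a circularity. The formulas $h(x)=-\tfrac12\partial_u g_{xx}|_{u=v=0}$ and $f(x)=\partial_u g_{xv}|_{u=v=0}$ involve the components of $g$ \emph{in the $(x,u,v)$ chart}. Analyticity of $g$ as a tensor says nothing about analyticity of its components in a chart that is only Lipschitz; for $g_{xx},g_{xv}$ to be analytic functions of $(x,u,v)$ you would need the chart $\phi(x,u,v)=\exp_{\gamma(x)}(u\,e_1+v\,T)$ itself to be analytic, i.e.\ you would need $\gamma$ and $e_1$ to be analytic along $\gamma$ --- which is exactly the statement you are trying to prove. The paper avoids this by arguing directly that analyticity of the splitting tensor $C$ forces the foliation $\mathcal{F}$, and hence the frame $\{e_1,e_2,T\}$, to be analytic on all of $M$; then the integral curve $\gamma$ of $e_2$ is analytic, and $f=a\circ\gamma$, $h=\beta\circ\gamma$ (equivalently, the geodesic curvature of $\gamma$) are analytic. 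Relatedly, your extension step ``continuity of $f$ combined with analyticity on an open dense set forces $f$ to be analytic on all of $\R$'' is simply false (e.g.\ $|x|$, or any smooth non-analytic function). You also need, before invoking Theorem~\ref{thm:continuous_coords}, the observation that irreducible $+$ analytic implies $M$ is locally irreducible everywhere; this follows since $a=\trace(\Theta C)$ is analytic and would vanish identically if it vanished on an open set.

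For the converse you are working harder than necessary. With $f,h$ analytic the hypotheses of Lemma~\ref{lemma:g_complete} are satisfied, and that lemma already gives the Ricci eigenvalues, completeness (from $|h|\le 1$), and the irreducibility criterion: by part~(e) the metric is locally irreducible iff $f^{-1}(0)$ has empty interior, which for a real-analytic $f\not\equiv 0$ is automatic. No computation of $\nabla R$ or bracket invariants is needed. So what you flagged as the ``main obstacle'' is already handled in the paper's preliminary lemma, while the real issue lies in the analyticity upgrade you treated as formal.
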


We now outline the proofs of these results.
Define the splitting tensor $C(X) := -\nabla_X T$ where $T$ is the unit eigenvector of the Ricci tensor with eigenvalue $0$.
By a de Rham splitting, $C$ is $0$ in a neighborhood if and only if $M$ locally splits with an $\R$ factor.
We define $M_{split}$ to be the points in $M$ which have a neighborhood locally isometric to a product metric $\Sigma^2 \times \R$ and $M_{irred}$ to be the complement of $M_{split}$.
The main geometric structure of these manifolds, when simply connected, is a Lipschitz foliation $\mathcal{F}$ on $M_{irred}$ by complete, totally geodesic, flat planes.

To prove Theorem~\ref{thm:pi1}, we show that no leaf in $\mathcal{F}$ can be invariant under a non-trivial isometry.
From this, we construct a Lyndon length function \cite{lyndon} on $\pi_1(M)$ by counting the leaves of a discrete subset of $\mathcal{F}$ that lie between a point in $\widetilde M$ and its image under an isometry.

We prove Theorem~\ref{thm:continuous_coords} by showing that in any connected component of $M_{irred}$, there exists a $C^{1,1}$ path orthogonal to $\mathcal{F}$.
From these, we construct the coordinates \eqref{eqn:g_curv_homog}, where $h$ is the geodesic curvature of this curve and $f$ is the norm of the $C$ along the curve.

In order to prove Theorem~\ref{thm:irreducible_examples}, we start with a foliation of $\mathbb{H}^2$ by geodesics and let $\gamma$ be the $C^{1,1}$ curve orthogonal to the geodesics.
In the coordinates on $\mathbb{H}^2 \times \R$ given by $(x,u,v) \mapsto (\exp_{\gamma(x)}(u U(x)), v)$, where $U$ is a unit vector field orthogonal to $\gamma'$, we define the metric as in \eqref{eqn:g_curv_homog}.
When $f = 0$, this metric is isometric to the product metric $\mathbb{H}^2 \times \R$ and the function $f$ warps the metric to make it irreducible.

The outline of the paper is as follows.
In Section~\ref{sec:preliminaries}, we give basic definitions and background.
In Section~\ref{sec:metric_on_Mc}, we describe the metric where $C \not= 0$, which also extends to describe metrics where $C = 0$ at some points.
In Section~\ref{sec:foliation}, we describe the foliation $\mathcal{F}$ of $M_{irred}$ by complete, totally geodesic flat planes.
In Section~\ref{sec:geodesic_foliations_H2}, we characterize the geodesic line foliations of $\mathbb{H}^2$ by the curves orthogonal to them.
In Section~\ref{sec:locally_irreducible}, consider the case where $M$ is locally irreducible at each point and prove Theorems~\ref{thm:continuous_coords} and~\ref{thm:irreducible_examples}.
In Section~\ref{sec:locally_reducible}, we describe the case where $M$ has locally split regions.
In Section~\ref{sec:topology} we prove Theorem~\ref{thm:pi1}.

The results in this paper include parts of the author's Ph.D. thesis under the direction of Dr. Wolfgang Ziller.
The atuhor is deeply grateful to Dr. Ziller for his invaluable guidance and patient support throughout the development and writing of these results.

\section{Preliminaries}
\label{sec:preliminaries}

In the remainder of the paper, we will assume that $M$ is complete and that the eigenvalues of the Ricci tensor are constants $(\lambda, \lambda, 0)$ with $\lambda = \pm 1$ unless otherwise stated.
Since the eigenvalues of the Ricci tensor determine the curvature tensor $R$ at any point up to an orthogonal transformation, we know that $R$ must be pointwise the curvature tensor of $\Sigma \times \R$ for $\Sigma$ either the round sphere or the hyperbolic plane.

Observe that this implies that at a point $p \in M$, there is a unit eigenvector $T$ of the zero eigenvalue of the Ricci tensor such that $\sec(T, X) = 0$ for all $X$ and $\sec(X,Y) = \lambda$ when $\{X,Y\}$ form a basis of $T^\perp \subset T_p M$.
Defining $\ker R_p := \{X \in T_p M | R(X, \cdot) \cdot = 0\}$ we get that $T$ spans $\ker R_p$.
We may define $T$ globally on $M$ by passing to a double cover if necessary.

It is well known that for any complete manifold, the distribution $\ker R$ has complete, totally geodesic leaves on the open subset where $\dim \ker R$ is minimal, see \cite{maltz}.
Hence the integral curves of $T$ are complete geodesics.
We call these the \emph{nullity geodesics} of $M$.
Thus we have that $T^\perp$ is a parallel distribution along each nullity geodesic.

Define $C: T^\perp \rightarrow T^\perp$ to be the splitting tensor of $T$, i.e. $C(X) = - \nabla_X T$.
Note that if $C$ vanishes in an open set, then by the de Rham splitting theorem, that set is locally isometric to a product $V \times \R$ with $V$ a surface of constant curvature $\lambda$.

Along a nullity geodesic $\gamma(t)$, we can choose a parallel basis $\curly{e_1, e_2}$ of $\ker R^\perp$.
Then $C$ written in this basis is a matrix $C(t)$ along $\gamma(t)$ satisfying
\begin{equation}
\label{eqn:ricatti}
C'(t) = C^2
\end{equation}
since
\begin{align*}
    0  &= R(X,T)T 
       = \nabla_T (C(X)) + C(\nabla_X T)
       = (\nabla_T C)(X) - C(C(X)).
\end{align*}

Note that \eqref{eqn:ricatti} has solutions $C(t) = C_0(I - tC_0)^{-1}$ for some matrix $C_0 = C(0)$.
Hence any real eigenvalue of $C$ must be zero.
Since $C$ is a $2 \times 2$ matrix, it either is nilpotent or has two non-zero complex eigenvalues.

Along a nullity geodesic,
\begin{equation*}
\Scal' = - 2 \trace C \cdot \Scal.
\end{equation*}
Indeed, from the second Bianchi identity
and the fact that in our case $\Scal = 2\chevron{R(X, Y) Y, X}$ for some orthonormal basis $\{X,Y\}$ of $T^\perp$,
\begin{align*}
    \Scal'
    &=  2\chevron{(\nabla_T R)(X, Y) Y, X}
    = 2\chevron{ R(Y, \nabla_{X} T) Y, X}
    + 2\chevron{R (\nabla_{Y} T, X) Y, X} \\
    &= - 2\Scal \cdot \chevron{C(X), X} - 2\Scal \cdot \chevron{C(Y), Y}
    = - 2 \trace C \cdot \Scal
\end{align*}
Since $M$ has constant scalar curvature, it follows that $\trace C$ is zero.
Note that \eqref{eqn:ricatti} also implies that $(\trace C)' = \trace (C^2) = (\trace C)^2 - 2 \det C$ along a nullity geodesic.
Hence $\det C = 0$ as well.
Since the only real eigenvalues of $C$ are zero, it follows that $C$ is nilpotent.

Define $M_C$ to be the subset of $M$ on which $C$ is non-zero.
Note that $M_{irred}$ is the closure of $M_C$ and that $M_{split}$ is the complement of $M_{irred}$, i.e. $M_{split}$ is the set of points $p \in M$ where $C = 0$ in a neighborhood of $p$.
Note that \eqref{eqn:ricatti} implies that if a nullity geodesic intersects $M_{split}$ ($M_C$ respectively) then it is contained in $M_{split}$ ($M_C$ respectively).
By \cite{graph_manifolds} Proposition 2.1, the universal cover of any connected component of $M_{split}$ is an isometric product $\Sigma \times \R$ where $\Sigma$ is a surface of constant curvature $\lambda$.

By going to a cover if necessary, we can define an orthonormal basis $e_1, e_2, T$ at any point in $M_C$ by $T \in \ker R$ and $e_1 \in \ker C$.
Since $C' = 0$, $e_1$ and $e_2$ are parallel along nullity geodesics.
There exists a smooth function $a \not= 0$ on $M_C$ so that $C(e_2) = a e_1$.
Hence
\begin{equation}
    \label{eqn:start_cov_derivs}
    \nabla_T e_1 = \nabla_T e_2 = \nabla_T T = 0, \quad \nabla_{e_1} T = 0, \quad \nabla_{e_2} T = - a e_1
\end{equation}
\begin{equation*}
    \nabla_{e_1} e_1 = \alpha e_2, \quad \nabla_{e_2} e_2 = \beta e_1, \quad \nabla_{e_1} e_2 = - \alpha e_1, \quad \nabla_{e_2} e_1 = a T - \beta e_2
\end{equation*}
for some smooth functions $\alpha, \beta$ on $M_C$.
Thus for the curvature tensor we have
\begin{align*}
    R(e_2, e_1) e_1 &= (e_1(\beta) + e_2(\alpha) - \alpha^2 - \beta^2) e_2
    + (a \beta - e_1(a)) T \\
    R(e_1, e_2) e_2 &= (e_1(\beta) + e_2(\alpha) - \alpha^2 - \beta^2) e_2
    + \alpha a T.
\end{align*}
Since $T \in \ker R$, this implies that
\begin{equation}
    \label{eqn:end_cov_derivs}
    \alpha = 0, \quad \Scal_M = e_1(\beta) - \beta^2, \quad \mbox{ and } \quad e_1(a) = a \beta.
\end{equation}

Again, since $T \in \ker R$, we have that $T(a) = T(\beta) = 0$, i.e. $a$ and $\beta$ are constant along nullity geodesics.
We let $D$ be the distribution on $M_C$ with $D_p$ spanned by $e_1, T \in T_p M$.
Note that \eqref{eqn:start_cov_derivs} implies that this distribution is completely integrable with totally geodesic leaves, which are flat since $T \in D_p$.
We denote by $\mathcal{F}_p$ the leaf of $D$ containing the point $p$.

\begin{lemma}
    \label{lemma:c_stays_nilpotent}
    Let $M^3$ be complete with constant Ricci eigenvalues $(\lambda, \lambda, 0)$ with $\lambda \not= 0$ and $M$ not everywhere locally reducible.
    Then
    \begin{enumerate}[(a)]
        \item up to scaling, $\lambda = -1$,
        \item integral curves of $e_1$ and $T$ starting at points in $M_C$ are complete geodesics contained in $M_C$, hence leafs $\mathcal{F}_p$ of the distribution $D$ are complete, and contained in $M_C$,
        \item and on $M_C$, we have $\abs{\beta} \leq 1$.
    \end{enumerate}
\end{lemma}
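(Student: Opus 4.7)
The plan is to track $\beta$ and $a$ along an integral curve of $e_1$ starting at a point of $M_C$ (non-empty, since $M$ is not everywhere locally reducible) and exploit three things: a Riccati ODE for $\beta$, an exponential ODE for $a$, and the fact that the continuous function $|C| = |a|$ must vanish at $\partial M_C$. The $T$-direction of (b) is immediate from the preliminaries: nullity geodesics are complete, and the consequence of \eqref{eqn:ricatti} noted in the text keeps them inside $M_C$ once they enter. So the substantive content is (a), (c), and the $e_1$-direction of (b).

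First, since $\alpha = 0$ by \eqref{eqn:end_cov_derivs} we have $\nabla_{e_1} e_1 = 0$ on $M_C$, so an integral curve $\gamma$ of $e_1$ through $p \in M_C$ is a geodesic and extends to all of $\R$ by completeness of $M$. Let $I = (t_-, t_+)$ be the maximal open interval about $0$ on which $\gamma(I) \subset M_C$. On $I$ the functions $\beta(t) := \beta(\gamma(t))$ and $a(t) := a(\gamma(t))$ satisfy $\beta' = \beta^2 + \lambda$ and $a' = a\beta$ (both following from \eqref{eqn:end_cov_derivs} together with the curvature identity for $R(e_2,e_1)e_1$ displayed just above it), so $a(t) = a(0)\exp\!\bigl(\int_0^t \beta\bigr)$ is nowhere zero on $I$. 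If $t_+ < \infty$, then $\gamma(t_+) \in \partial M_C$; since $|C|$ is a continuous tensor norm and equals $|a|$ on $M_C$, we must have $a(t)\to 0$, i.e.\ $\int_0^t \beta \to -\infty$, as $t \to t_+^-$, and similarly at $t_-$ if $t_- > -\infty$.

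Next I would split on $\lambda$. If $\lambda = 1$, then $\beta' \geq 1$, so $\beta$ strictly increases, and explicitly $\beta(t) = \tan(c + t)$ blows up at a finite time $t^\star$. Either $t_+ < t^\star$, in which case $\beta$ and hence $\int_0^t \beta$ stay bounded, or $t_+ = t^\star$, in which case $\beta \to +\infty$ and an explicit computation gives $\int_0^t \beta \to +\infty$; both contradict $\int \beta \to -\infty$. Hence $\lambda = 1$ is impossible, giving $\lambda = -1$ and proving (a). With $\lambda = -1$, the same argument proves (c): the explicit solutions of $\beta' = \beta^2 - 1$ show that $|\beta(p)| > 1$ makes $\beta$ blow up (to $+\infty$ forward if $\beta(0) > 1$, to $-\infty$ backward if $\beta(0) < -1$), with $\int_0^t \beta \to +\infty$ at the blow-up time in either direction, again contradicting the boundary behavior. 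Thus $|\beta|\leq 1$ on $M_C$. Once $|\beta| \leq 1$, the ODE keeps $|\beta(t)|\leq 1$ for all $t\in\R$, so $\bigl|\int_0^t \beta\bigr|\leq |t|$ and $a$ is nowhere zero; hence $I = \R$, which is (b) for $e_1$. Combined with the $T$-direction and the fact that $[e_1, T] = \nabla_{e_1}T - \nabla_T e_1 = 0$ by \eqref{eqn:start_cov_derivs}, the commuting flows $\phi^{e_1}_s \circ \phi^{T}_t$ parametrize a complete leaf $\mathcal{F}_p$ as $\R^2 \to M$.

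The hard part is the apparent circularity between (b) and (c): it is tempting to prove $|\beta|\leq 1$ assuming the curve stays in $M_C$, but that is exactly part of (b). The escape is to reason entirely on the \emph{a priori} maximal interval $I$ and use the sign of the boundary divergence: any blow-up of $\beta$ inside $I$ pushes $|a|$ to $+\infty$, which is the opposite of what must happen at $\partial M_C$, so $|\beta|$ can never leave $[-1,1]$ and $I$ must equal $\R$.
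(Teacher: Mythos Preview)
Your argument is correct and shares the paper's overall strategy: track the ODEs along the $e_1$-integral curve through a point of $M_C$, and derive a contradiction from the boundary/blow-up behavior. The execution differs in one organizing choice. The paper makes the substitution $y = 1/a$ and observes that $y'' + \lambda y = 0$ (the Jacobi equation); for $\lambda = 1$ the sinusoidal solution is bounded (so $a$ cannot go to $0$, forcing $I = \R$) but has zeros (so $a$ must blow up, contradicting smoothness of $C$), while for $\lambda = -1$ the solution $A_0\cosh t + A_1\sinh t$ stays finite and nonzero, giving (b) directly, with (c) read off from the explicit formula \eqref{eqn:beta}. You instead keep the first-order system $\beta' = \beta^2 + \lambda$, $a' = a\beta$, write $a = a(0)\exp\bigl(\int\beta\bigr)$, and play the required boundary condition $\int\beta \to -\infty$ against the actual sign of the divergence of $\int\beta$ at a Riccati blow-up. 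The two are of course equivalent via the standard Riccati-to-linear transform $\beta = a'/a = -y'/y$. What your route buys is that the apparent circularity between (b) and (c) is resolved explicitly (you argue entirely on the maximal interval $I$ before knowing $I = \R$), whereas the paper's linear formulation makes the dichotomy between oscillatory and exponential solutions---and hence the sign of $\lambda$---visible in one line.
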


\begin{proof}
    Take $p \in M_C$.
    Then $e_1$ is well-defined in a neighborhood of $p$ in $M_C$.
    Since $\nabla_{e_1} e_1 = 0$, the integral curve of $e_1$ is a geodesic $\eta$ which is defined as long as $C \not= 0$.
    We first show that the complete geodesic $\eta$ lies in $M_C$.

    Writing a dot to indicate $e_1$ derivatives, we get
    \begin{equation*}
        \paren{\frac 1 a}^{\cdot \cdot} = - \paren{\frac {\dot a}{a^2} }^\cdot = - \paren{ \frac \beta a}^{\cdot} = - \frac{ (\Scal + \beta^2)} a + \frac {\beta^2} a.
    \end{equation*}
    Hence
    \begin{equation*}
        \paren{\frac 1 a}^{\cdot \cdot} + \frac 1 a \Scal = 0
    \end{equation*}
    and so $\tfrac 1 a$ satisfies the Jacobi equation.
    This equation holds only in $M_C$.
    We must then show that $a$ cannot go to zero along $\eta$.

    We can scale the metric so that $\lambda = +1$ or $\lambda = -1$.
    If $\Scal = 2$ is positive, then $\tfrac 1 a$ has solutions of the form
    $\frac 1 a = A_0 \cos t + A_1 \sin t$ which is bounded and hence $a$ never goes to zero.
    Therefore $\eta$ remains in $M_C$.
    But then there is a zero of $\frac 1 a$ in finite time which implies that $a$ diverges.
    This is a contradiction since $C$ is well-defined on all of $M$.
    Hence we may assume that $\lambda = -1$.

    Thus the solutions of \eqref{eqn:ricatti} are of the form $\tfrac 1 a = A_0 \cosh (t) + A_1 \sinh(t)$.
    Hence $a \rightarrow 0$ only as $t \rightarrow \pm \infty$ and therefore $C$ remains non-zero along $\eta$ for all time.
    Since \eqref{eqn:ricatti} implies that $C$ is constant along nullity geodesics as well, $C$ cannot go to zero on any leaf of the span of $\{e_1, T\}$ and hence the leaf is complete.

    Since $\beta = e_1(a)/a = -a e_1(1/a)$, we have that
    \begin{equation} \label{eqn:beta} \beta(t) = - \frac{A_0 \sinh(t) + A_1 \cosh(t)}{A_0 \cosh(t) + A_1 \sinh(t)} = - \frac{ \tanh(t) - \beta(0)}{ 1 - \beta(0) \tanh(t)}. \end{equation}
    This implies that $\abs{\beta} \leq 1$ since otherwise $\beta$ has a singularity in finite time along the complete geodesic $\eta$.
\end{proof}
Notice that  \eqref{eqn:beta} implies that if $\beta = \pm 1$ at any point, then it is $\pm 1$ along the entire nullity geodesic through that point.
Furthermore, if $M$ is simply connected, then the leafs of $D$ are isometric to $\R^2$ since $\exp$ is a diffeomorphism.

Finally, we observe that $a$ is a smooth function with $a = 0$ whenever $C = 0$.
To see this, let $\Theta$ be the rotation of $TM$ by $\pi/2$ about $T$ which takes $e_1$ to $e_2$ at points where $C \not= 0$.
This is smoothly defined (at least locally) on $M$ since $T$ is smooth.
Then $\Theta C$ has eigenvalues $a$ and $0$ since $\Theta C(e_2) = \Theta (a e_1) = a e_2$.
Hence the characterstic polynomial of $\Theta C$ is $t^2 - at$, and so $a$ is smooth on all of $M$.

\section{Description of the Metric on \texorpdfstring{$M_C$}{MC}}
\label{sec:metric_on_Mc}

The form of metrics with Ricci eigenvalues $(-1,-1,0)$ is well-known locally at points where $C \not= 0$ \cite{szabo2, KTV_curv_homog, KTV_new_examples}.
This form is a special case of the metric due to Sekigawa \cite{sekigawa} given by
\begin{equation}
\label{eqn:sekigawa_form}
g = p(x,u)^2 dx^2 + (du - v \; dx)^2 + (dv + u \; dx)^2.
\end{equation}
Specifically, if $M$ Ricci eigenvalues $(-1,-1,0)$, then $p(x,u) = f_1(x) \cosh(u) + f_2(x) \sinh(u)$.

We will work with a different parametrization (in the $x$ coordinate) which then enables us to include points where $C = 0$.
The metrics will be of the form of \eqref{eqn:g_curv_homog}, see Proposition~\ref{prop:g_curv_homog}.
Moreover, we will show that this form holds in a  ``global'' sense: that such a coordinate chart covers an entire connected component of $M_C$ when $M$ is simply connected and complete.
Choosing $f(x) = 0$ and $h(x) = 0$ makes the $u,v$ coordinates a standard parametrization of the product metric on $\mathbb{H}^2 \times \R$.
Moreover, if $f(x) = 0$ with any function $h$, the metric is locally isometric to $\mathbb{H}^2 \times \R$, and, as follows from the lemma below, is complete if $\abs{h(x)} \leq 1$ for all $x$.

We begin with two technical lemmas.
The first considers the basic properties, particularly completeness, of metrics which have the form of \eqref{eqn:g_curv_homog}.
\begin{lemma}
    \label{lemma:g_complete}
    Suppose $g$ is a metric on $V = (a_1,a_2) \times \R^2$, with coordinates $x \in (a_1,a_2)$ and $u,v \in \R^2$, of the form \eqref{eqn:g_curv_homog} with $f, h:(a_1, a_2) \rightarrow \R$ smooth.
    Then
    \begin{enumerate}[(a)]
        \item $g$ has Ricci eigenvalues $(-1,-1,0)$,
        \item $T = \pder{}{v}$ and $e_1 = \pder{}{u}$ and each leaf $\mathcal{F}_p$ is given by a plane with $x$ constant,
        \item $a(x,u,v) = f(x) (\cosh u - h(x) \sinh u)^{-1}$,
    \item if $f(x) \not= 0$ then $\beta = (h(x) \cosh u - \sinh u)(\cosh u - h(x) \sinh u)^{-1}$, and $e_2 = (\cosh u - h(x) \sinh u)^{-1} \paren{\pder{}{x} + v f(x) \pder{}{u} -  u f(x) \pder{}{v}}$,
        \item $g$ is locally irreducible if and only if $f^{-1}(0)$ contains no open subsets,
        \item $V$ is complete if and only if $(a_1,a_2) = (-\infty, \infty)$ and $\abs{h(x)} \leq 1$ for all $x$, and
        \item $h$ is the geodesic curvature of the path $u = v =0$.
    \end{enumerate}
\end{lemma}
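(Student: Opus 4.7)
My plan is to extract the natural orthonormal frame dual to the coframe $\theta_0 = p\,dx$, $\theta_1 = du - fv\,dx$, $\theta_2 = dv + fu\,dx$, where $p(x,u) := \cosh u - h(x)\sinh u$. Solving the duality relations reads off $e_1 = \partial_u$, $T = \partial_v$, and $e_2 = \frac{1}{p}(\partial_x + fv\,\partial_u - fu\,\partial_v)$, which establishes (b) and the formula for $e_2$ in (d). A short computation then yields the Lie brackets $[e_1, T] = 0$, $[T, e_2] = \frac{f}{p}\, e_1$, and $[e_1, e_2] = -\frac{p_u}{p}\, e_2 - \frac{f}{p}\, T$, where $p_u = \sinh u - h\cosh u$.

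Applying the Koszul formula to these brackets produces covariant derivatives of the form \eqref{eqn:start_cov_derivs}--\eqref{eqn:end_cov_derivs}, with explicit values $a = f/p$ and $\beta = -p_u/p = (h\cosh u - \sinh u)/(\cosh u - h\sinh u)$, which establishes (c) and the formula for $\beta$ in (d). For (a) I compute the curvature tensor from this connection: the identity $p_{uu} = p$ gives $e_1(\beta) - \beta^2 = -1$, and a direct calculation gives $e_1(a) = a\beta$, so that $R(e_1, e_2)e_2 = -e_1$, $R(e_1, e_2)T = 0$, and every curvature component with a $T$-argument vanishes; hence the Ricci eigenvalues are $(-1, -1, 0)$ with $\ker R$ spanned by $T$. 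For (e), $p > 0$ wherever the metric is defined, so $C$ vanishes at a point iff $a = f/p = 0$ there iff $f = 0$; an open region of vanishing $C$ therefore corresponds to an open subinterval of $f^{-1}(0)$. Part (g) is immediate: $(x_0 + t, 0, 0)$ has $p(x,0) = 1$ and tangent $e_2$, hence is unit speed, and $\nabla_{e_2} e_2$ restricted to $u = v = 0$ equals $\beta(x, 0)\, e_1 = h(x)\, e_1$.

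The main obstacle is (f). Necessity is quick: if $(a_1, a_2) \neq \R$ the unit-speed curve $(x_0+t, 0, 0)$ exits in finite time, witnessing a non-convergent Cauchy sequence; and if $|h(x_0)| > 1$, then $p(x_0, u_0) = 0$ at some $u_0$, so $g$ fails to be Riemannian. For sufficiency I plan to exploit the $x$-dependent rotation $\xi = u\cos F(x) - v\sin F(x)$, $\eta = u\sin F(x) + v\cos F(x)$ with $F'(x) = f(x)$: the combination $du^2 + dv^2 + 2f(u\,dv - v\,du)\,dx + f^2(u^2+v^2)\,dx^2$ collapses in polar form to $dr^2 + r^2(d\theta + f\,dx)^2$, which becomes $d\xi^2 + d\eta^2$ after substituting $\psi = \theta + F(x)$. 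Globally on $\R^3$ this rewrites $g = p(x,\, \xi\cos F + \eta\sin F)^2\,dx^2 + d\xi^2 + d\eta^2$. For a unit-speed geodesic $\gamma : [0, \tau) \to V$ with maximal finite $\tau$, the bounds $|\xi'|, |\eta'| \le 1$ force $|u(t)| \le |\xi(0)| + |\eta(0)| + 2\tau$, and the elementary inequality $p \ge e^{-|u|}$ (valid when $|h| \le 1$) yields $|x'| \le 1/p \le e^{|\xi(0)| + |\eta(0)| + 2\tau}$. Hence $\gamma$ stays in a compact subset of $V$, so ODE theory extends it past $\tau$, contradicting maximality.
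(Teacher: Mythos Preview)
Your proposal is correct and follows essentially the same approach as the paper: direct computation for (a)--(d), (e), and (g), and for (f) the identical rotation $(\xi,\eta) = (u\cos F - v\sin F,\; u\sin F + v\cos F)$ with $F' = f$ to put the metric in the diagonal form $p^2\,dx^2 + d\xi^2 + d\eta^2$, followed by the bound $p \ge e^{-|u|}$ to trap a finite-length path in a compact set. The only minor difference is your necessity argument for $|h|\le 1$: you observe directly that $|h(x_0)|>1$ forces $p(x_0,u_0)=0$ somewhere, so the hypothesis ``$g$ is a metric on $(a_1,a_2)\times\R^2$'' already rules this out, whereas the paper appeals to Lemma~\ref{lemma:c_stays_nilpotent} (which uses completeness); your observation is cleaner and shows the $|h|\le 1$ clause in (f) is in fact automatic from the standing hypothesis.
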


\begin{proof}
    Parts (a-d) and (g) follow by direct computation.

    Note that (c) implies that if $a \not= 0$ at some point of an $x = const.$ plane then it is non-zero at every point on that plane.
    Thus (e) follows since $M$ is locally reducible at a point if and only if $C = 0$ on an open neighborhood.

    Now consider part (f).
    From (d) and Lemma~\ref{lemma:c_stays_nilpotent}, it follows that $\abs{h(x)} \leq 1$ is necessary for completeness.

    Define $A(x) = \int_0^x f(X) dX$ (see also Lemma~\ref{lemma:def_A} for a related function).
    We make a change of coordinates by
    \begin{equation*}
        (x,y,z) = (x, u \cos A(x) - v \sin A(x), u \sin A(x) + v \cos A(x)).
    \end{equation*}
    This performs a rotation in each $u$-$v$ plane by an amount that depends on $x$.
    In these coordinates, $g$ has the form
    \begin{equation*}
        g = p(x,y,z)^2 dx^2 + dy^2 + dz^2
    \end{equation*}
    where $p(x,y,z) = \cosh(u) - h(x) \sinh(u)$ with $u(x,y,z) = y \cos A(x) + z \sin A(x)$.
    This is an explicit form of the metric described in Theorem 2.5 of \cite{szabo2}.

    We now prove that $g$ is not complete if and only if the interval $(a_1,a_2)$ has $a_1$ or $a_2$ finite.
    Suppose that $g$ is not complete.
    Then there is a path $\gamma$ of finite length which has no limit in $V$.
    Let $\gamma(t) = (x(t), y(t), z(t))$.
    Then $\int \abs{y'(t)} dy$ and $\int \abs{z'(t)} dt$ are both lower bounds for the length of $\gamma$.
    Hence $y(t)$ and $z(t)$ are bounded.
    In particular, this shows that $\abs{u} \leq R$ for some $R \in \R$.
    Since $\abs{h(x)} \leq 1$,
    \begin{equation*}
        \abs{ \cosh(u) - h(x) \sinh(u) } \geq 2 e^{-R}
    \end{equation*}
    and $p(x,y,z) \geq 2e^{-R}$ at any point of $\gamma$.

    Since $\int \abs{p(x,y,z)} \abs{x'} dt$ is also a lower bound for the length of $\gamma$, $\int \abs{x'} dt$ is also  finite.
    So either $a_1$ or $a_2$ must be finite.

    For the other direction, either $a_1$ or $a_2$ is finite.
    Without loss of generality, we will assume that $a_2 \geq 0$ is finite and $a_1 \leq 0$.
    Then consider the path $\gamma(x) = (x,0,0)$ for $x \in [0,a_2)$.
    This path has length $\int_0^{a_2} dx$ which is finite but has no limit in $V$.
    Hence $V$ is not complete.
\end{proof}

Observe that in the metric \eqref{eqn:g_curv_homog}, if $f(x) = 0$ on an interval $I$, then $I \times \R^2 \subset M_{split}$.
Thus this description of the metric allows us to glue a split region to a non-split part of the metric.
This is the first example with this property.
Next we describe the metric on $M_C$.

\begin{prop}
    \label{prop:g_curv_homog}
    Suppose that $M^3$ is a complete, simply connected Riemannian manifold with Ricci eigenvalues $(-1, -1, 0)$.
    Then any connected component of $M_C$ has smooth coordinates $(x,u,v) \in (a_1, a_2) \times \R^2$ (with $a_i$ possibly $\pm \infty$) with metric of the form \eqref{eqn:g_curv_homog}
    for some smooth functions $f, h: (a_1, a_2) \rightarrow \R$ with $f(x) \not= 0$ and $\abs{h(x)} \leq 1$.
    The boundaries of this component are complete, flat, totally geodesic planes, one for each $a_i$ that is finite.
\end{prop}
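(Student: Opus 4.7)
The plan is to construct the chart by exponentiating out from a curve transverse to the foliation. Fix $p_0$ in the given connected component $U$ of $M_C$ and let $\gamma\colon (a_1,a_2) \to U$ be the maximal integral curve of the smooth unit vector field $e_2$ through $p_0$. By Lemma~\ref{lemma:c_stays_nilpotent} and the remark following it, each leaf $\mathcal{F}_{\gamma(x)}$ of $D = \mathrm{span}\{e_1, T\}$ is a complete, flat, totally geodesic plane isometric to $\R^2$ via $\exp_{\gamma(x)}$, and the functions $f(x) := a(\gamma(x))$, $h(x) := \beta(\gamma(x))$ are smooth on $(a_1, a_2)$ with $f \neq 0$ and $|h| \leq 1$.

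Then I would define
\[
\Phi\colon (a_1, a_2) \times \R^2 \to M, \qquad \Phi(x, u, v) := \exp_{\gamma(x)}\bigl(u\, e_1|_{\gamma(x)} + v\, T|_{\gamma(x)}\bigr).
\]
The relations \eqref{eqn:start_cov_derivs}--\eqref{eqn:end_cov_derivs} imply that $e_1, T, e_2$ are parallel along every radial geodesic inside a leaf of $D$, so $d\Phi(\partial_u) = e_1$ and $d\Phi(\partial_v) = T$ at $\Phi(x, u, v)$. The vector $d\Phi(\partial_x)$ is the Jacobi field along such a geodesic with $J(0) = e_2|_{\gamma(x)}$ and $J'(0)$ read off from \eqref{eqn:start_cov_derivs}; writing $J = A\,e_1 + B\,T + C\,e_2$, the identities $R(\cdot, T)\cdot = 0$ and $R(e_2, e_1)e_1 = -e_2$ decouple the Jacobi system into $A'' = B'' = 0$ and $C'' - u^2 C = 0$, yielding $A(1) = -v f(x)$, $B(1) = u f(x)$, and $C(1) = \cosh u - h(x)\sinh u$. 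This last factor is strictly positive since $|h(x)| \leq 1$, so $d\Phi$ has full rank; comparing inner products of the basis images then shows $\Phi^* g$ is exactly the metric \eqref{eqn:g_curv_homog}.

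It remains to show $\Phi$ is a diffeomorphism onto $U$. The image is open by the local diffeomorphism property. For injectivity, a hypothetical revisit $\gamma(x_1), \gamma(x_2)$ to the same leaf (a copy of $\R^2$) would close up with a straight-line segment in that leaf into a loop in $U$, and using simple-connectedness of $M$ together with the orthogonality $\gamma' = e_2 \perp \mathcal{F}$, one derives a contradiction. For surjectivity, the image is also relatively closed in $U$: given $q \in U$ a limit of $\Phi(x_n, u_n, v_n)$, one uses completeness of the leaves together with the uniform lower bound on $\cosh u - h(x)\sinh u$ to extract a convergent subsequence $(x_n, u_n, v_n) \to (x_\infty, u_\infty, v_\infty)$ with $\Phi(x_\infty, u_\infty, v_\infty) = q$. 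Finally, if $a_i$ is finite, unit-speed $\gamma$ extends by completeness of $M$ to a limit $q \in \partial U$; maximality forces $C(q) = 0$, so $f(x) = a(\gamma(x)) \to 0$, and because $C$ is constant along nullity geodesics the limiting set of leaves $\mathcal{F}_{\gamma(x)}$ is a complete, flat, totally geodesic plane with $C \equiv 0$, which is the claimed boundary leaf.

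I expect the main obstacle to be the global bijectivity of $\Phi$ onto the full connected component $U$. The Jacobi-field calculation giving the local diffeomorphism and the metric form is mechanical from \eqref{eqn:start_cov_derivs}--\eqref{eqn:end_cov_derivs} and Lemma~\ref{lemma:c_stays_nilpotent}, but establishing injectivity and surjectivity requires carefully combining simple-connectedness of $M$, the planar structure of the leaves of $\mathcal{F}$, and the transverse behavior of the integral curves of $e_2$ to rule out non-trivial covering phenomena.
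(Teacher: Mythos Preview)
Your overall strategy---exponentiate off a maximal integral curve of $e_2$, compute $d\Phi(\partial_x)$ as a Jacobi field, and then argue global bijectivity---is exactly the paper's. The Jacobi computation and the resulting metric form are right (with the minor remark that in your equation $C'' - u^2 C = 0$ the symbol $u$ is really the fixed target coordinate $u_0$, not the radial parameter). The gaps are precisely where you anticipated them, in the bijectivity step, and both of your sketches need a sharper mechanism.

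For injectivity, ``close up the loop and invoke simple-connectedness'' is not a proof: a null-homotopic loop is no contradiction by itself. What is actually needed is a monotone quantity along integral curves of $e_2$. The paper uses the signed distance $t$ to a fixed leaf $\mathcal{F}_0$ (well-defined since $M$ is Hadamard and $\mathcal{F}_0$ is a totally geodesic plane). Along $\gamma$ one has $\tfrac{d}{ds}(t\circ\gamma) = \langle \grad t, e_2\rangle$, and if this ever vanished then $\grad t$ would lie in $T\mathcal{F}_{\gamma(s)}$, forcing the geodesic realizing the distance to lie in that totally geodesic leaf and hence $\mathcal{F}_{\gamma(s)}\cap\mathcal{F}_0\neq\emptyset$---impossible for distinct leaves of the foliation on $M_C$. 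So $t\circ\gamma$ is strictly monotone and $\gamma$ meets each leaf at most once; combined with the fact that $\Phi|_{\{x\}\times\R^2}$ is the exponential of a flat plane in a Hadamard manifold, this gives injectivity of $\Phi$.

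For surjectivity, the phrase ``uniform lower bound on $\cosh u - h(x)\sinh u$'' cannot do the work: that quantity is only $\ge e^{-|u|}$, which controls nothing until $|u|$ is already bounded, and it says nothing about why $x_n$ cannot drift to an endpoint $a_i$. The correct invariant is $|a|$. The paper shows $\Phi$ has the path-lifting property: for any path $\mu\subset U$ one has $|a\circ\mu|\ge\delta>0$ by compactness, and since $|a|$ is preserved by the local isometry the same bound holds on any partial lift $\tilde\mu$. Because $f(x)=a(\gamma(x))\to 0$ as $x\to a_i$ (maximality of $\gamma$) and $|a(x,u,v)| = |f(x)|/(\cosh u - h(x)\sinh u)$, the bound $|a|\ge\delta$ together with a length bound on $(u,v)$ forces $x(\tilde\mu)$ into a compact subinterval of $(a_1,a_2)$; then equality of lengths bounds $(u,v)$ (most transparently after the rotation to $(x,y,z)$ coordinates in Lemma~\ref{lemma:g_complete}, where $g\ge dy^2+dz^2$). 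Hence lifts extend, $\Phi$ is a covering onto its image, and the same estimate applied to a path joining any $q\in U$ to the image shows $\Phi(N)=U$.
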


\begin{proof}
    Recall that \eqref{eqn:g_curv_homog} is
    \begin{equation*}
        g = (\cosh(u) - h(x) \sinh(u))^2 dx^2 + (du - v f(x) \; dx)^2 + (dv + u f(x) \; dx)^2.
    \end{equation*}
    Let $V$ be a connected component of $M_C$.
    Fix a maximal integral curve $\gamma: (a_1, a_2) \rightarrow M$ of the vector field $e_2$ on $V$.
    By maximality, $a(\gamma(t)) \not= 0$ for $t \in (a_1, a_2)$ and $\lim_{t \rightarrow a_i} a(\gamma(t)) = 0$ if $a_i$ is finite.
    Let $N$ be the manifold defined by one coordinate chart with $(x,u,v) \in (a_1, a_2) \times \R^2$
    and metric of the form in \eqref{eqn:g_curv_homog} where $f(x) = a(\gamma(x))$ and $h(x) = -\beta(\gamma(x))$,
    By Lemma~\ref{lemma:c_stays_nilpotent} this implies that $\abs{h} \leq 1$.

    The manifold $N$ is simply connected (but may not be complete).
    Define $\phi: N \rightarrow M$ by
    \begin{equation*}
        \phi(x,u,v) = \exp_{\gamma(x)} (u e_1 + v T).
    \end{equation*}
    Notice that by Lemma~\ref{lemma:c_stays_nilpotent}, $\phi(N) \subset M_C$ and we will show that $\phi$ is in fact an isometry onto $V$.

    We first show that $\phi$ is a local isometry.
    Note that $\pder{\phi}{u} = e_1$ and $\pder{\phi}{v} = T$.
    We next compute $\pder{\phi}{x}$.
    Fix $(x_0, u_0, v_0)$.
    Consider the family of geodesics $\alpha_s(t) = \phi(x_0 + s, t u_0, t v_0)$.
    Define $J(t)$ to be the Jacobi field along $\alpha_0$ corresponding to the variation $\alpha$.
    Then 
    \begin{equation}
        J(0) = \gamma'(0) =  e_2, J'(0) = \frac{D}{ds} \pder{\alpha_s}{t}\big|_{s=0,t=0} = \nabla_{e_2} (u_0 e_1 + v_0 T) = u_0 (a T - \beta e_2) - v_0 a e_1.
    \end{equation}
    It follows from \eqref{eqn:start_cov_derivs}--\eqref{eqn:end_cov_derivs} that the Jacobi field with these initial conditions is given by
    \begin{equation*}
    J(t) =
        - v_0 a(\gamma(x_0)) t e_1
        + \brak{ \cosh(u_0 t) - \beta(\gamma(x_0)) \sinh(u_0 t) } e_2
        + u_0 a(\gamma(x_0)) t T.
    \end{equation*}
    Since $f(x) = a(\gamma(x))$ and $h(x) = \beta(\gamma(x))$, we see that
    \begin{equation*}
        \left.\pder{\phi}{x}\right|_{(x_0,u_0,v_0)} = J(1) = \paren{\cosh u_0 - h(x_0) \sinh u_0} e_2 - v_0 f(x) e_1 + u_0 f(x) T.
    \end{equation*}
    Now it is easy to compute that $\phi$ is a local isometry.

    We now prove that $\phi$ is a covering map $N \rightarrow \phi(N)$.
    It suffices to show that $\phi$ has the path lifting property.
    Let $\mu: [0,1] \rightarrow \phi(N) \subset V$ be a path.
    Then there exists $\delta >0$ such that $\abs{a(\mu(t))} \geq \delta$ for all $t$.
    If $\tilde \mu: [0, t_0) \rightarrow N$ is a lift of $\mu$, then $\abs{a(\tilde \mu|_{[0,t_0)})} \geq \delta$ as well since $a$ is, up to sign, an isometry invariant.
    Thus $x(\tilde \mu|_{[0, t_0)}) \in [b_1, b_2] \subset (a_1, a_2)$ for some $[b_1, b_2]$.
    Since $\mbox{length}(\tilde \mu|_{[0,t]}) = \mbox{length}(\mu|_{[0,t]})$ for all $t < t_0$, the $u, v$ coordinates along $\tilde \mu$ are bounded as well, and hence $\lim \tilde \mu$ lies in compact set which implies that $\mu$ can be lifted past $t = t_0$.

    The same argument implies that $\phi(N) = V$ since for any point $p \in V$, we can choose a path from $p$ to $p^* \in \phi(N)$, which by the above argument has a lift.

    In order to show that $\phi$ is injective, let $\tilde \gamma$ be the integral curve of $e_2$ through a point $\tilde p \in N$.
    Since $\phi$ takes each leaf of $\mathcal{F}$ in $N$ isometrically to a leaf of $\mathcal{F}$ in $M$, it suffices to show that $\phi \circ \tilde \gamma$ intersects each leaf of $\mathcal{F}$ at most once.
    Let $\mathcal{F}_0$ the leaf through $p = \phi(\tilde p)$.
    Since $\sec \leq 0$ and $M$ is simply connected, we have a globally defined signed distance function $t: M \rightarrow \R$ to the leaf $\mathcal{F}_0$.
    The integral curves of $\grad t$ are the geodesics orthogonal to $\mathcal{F}_0$.
    Hence $\frac{d}{ds}( t \circ \gamma(s)) = \chevron{\grad t, \gamma'} \not= 0$ since otherwise $\grad t \in T \mathcal{F}_{\gamma(s)}$ and so $\mathcal{F}_{\gamma(s)}$ intersects $\mathcal{F}_0$.
    Thus $t$ is monotonic on $\gamma$ which implies that $\phi$ is injective and hence an isometry onto $V$.

\end{proof}

\section{Foliation by Flat Planes}
\label{sec:foliation}

We now discuss the properties of the foliation $\mathcal{F}$ on $M_C$.
We will see that it extends to a Lipschitz foliation on the closure $M_{irred}$ of $M_C$.
Furthermore, there are $C^{1,1}$ curves everywhere orthogonal to the foliation, and that the connected components of $M_{irred}$ are plane bundles over these curves.
We assume until Section~\ref{sec:topology} that $M$ is simply connected.

\begin{lemma}
    \label{lemma:cont_foliation}
    Let $M^3$ be a complete, simply connected Riemannian manifold with Ricci eigenvalues $(-1, -1, 0)$.
    Then $\mathcal{F}$ extends to a continuous foliation on $M_{irred}$ whose leaves are complete, flat, totally geodesic planes.
\end{lemma}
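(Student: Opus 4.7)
The strategy is to extend $\mathcal{F}$ across $\partial M_C$ by adjoining, for each connected component of $M_C$, the totally geodesic flat planes already provided as its finite-coordinate boundaries in Proposition~\ref{prop:g_curv_homog}, and then to check that this yields a continuous partition of $M_{irred}$.

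First, decompose $M_C = \bigsqcup_\alpha V_\alpha$ into its connected components. By Proposition~\ref{prop:g_curv_homog} each $V_\alpha$ admits global coordinates $(x,u,v) \in (a_1^\alpha, a_2^\alpha) \times \R^2$ in which the metric has the form~\eqref{eqn:g_curv_homog}, and by Lemma~\ref{lemma:g_complete}(b) the leaves of $\mathcal{F}|_{V_\alpha}$ are exactly the planes $\{x = \text{const.}\}$ with tangent distribution $\text{span}(\partial_u, \partial_v)$. For each finite endpoint $a_i^\alpha$ the proposition supplies a complete, flat, totally geodesic plane $P_i^\alpha \subset M$ bordering $\overline{V_\alpha}$. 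I would define the extended foliation $\bar{\mathcal{F}}$ on $M_{irred}$ to consist of the original leaves of $\mathcal{F}$ on $M_C$ together with all such boundary planes $P_i^\alpha$.

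Next I would verify three items. (i) Every $p \in M_{irred} \setminus M_C$ lies on some $P_i^\alpha$: pick $p_n \in M_C$ with $p_n \to p$; since $M$ is Hadamard (simply connected, complete, $\sec \leq 0$) and every leaf of $\mathcal{F}$ in $M_C$ is an embedded closed totally geodesic $\R^2$, after passing to a subsequence lying in a single $V_\alpha$ one uses that the leaf through $(x_n, 0, 0)$ is isometric to $\R^2$ with $d_M((x_n, 0, 0), (x_n, u_n, v_n)) = \sqrt{u_n^2 + v_n^2}$ to force $(u_n, v_n)$ bounded; then $x_n$ must converge to a finite endpoint $a_i^\alpha$, placing $p \in P_i^\alpha$. (ii) The leaves of $\bar{\mathcal{F}}$ are pairwise disjoint: two boundary planes meeting at a point would share the same tangent plane and, as complete embedded totally geodesic $2$-planes in the Hadamard manifold $M$, would coincide; a boundary plane cannot meet a leaf inside $M_C$ since $a \equiv 0$ on the former and $a \neq 0$ on the latter. (iii) The tangent $2$-plane distribution is continuous on $M_{irred}$: on each $V_\alpha$ it is the smooth span of $\partial_u, \partial_v$ in the coordinates~\eqref{eqn:g_curv_homog}, these coordinates extend continuously to $x = a_i^\alpha$ where the distribution agrees with $T P_i^\alpha$, and a plane $P_i^\alpha = P_j^\beta$ shared between two adjacent components receives the same tangent plane from both one-sided limits.

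The main obstacle is the extraction in (i): ruling out configurations in which $p_n$ hops among infinitely many distinct components $V_{\alpha_n}$ so that no subsequence lies in a single $V_\alpha$. I would handle this by using the continuity of $T$ together with the fact that each $P_i^\alpha$ is a complete embedded plane transverse to $T$; a small geodesic ball around $p$ can be met by only finitely many such planes (otherwise their accumulation would contradict the smoothness of $T$ and of the distribution $D$ on $M_C$), forcing the $V_{\alpha_n}$ eventually to coincide with one of finitely many components. Once this local-finiteness is secured, the continuity and disjointness assertions reduce to direct inspection in the coordinates~\eqref{eqn:g_curv_homog}.
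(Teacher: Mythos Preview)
Your approach has a genuine gap at exactly the point you flagged. The local-finiteness claim is false: take the metric of Example~\ref{example:irreducible_4}, where $f$ vanishes precisely on a Cantor set $K$. Then the connected components of $M_C$ are the complementary intervals of $K$ crossed with $\R^2$, and infinitely many of them accumulate at every point of $K \times \R^2$. The justification you propose cannot work either: the planes $P_i^\alpha$ are spanned by $e_1$ and $T$, so they \emph{contain} $T$ rather than being transverse to it, and both $T$ (globally) and $D$ (on $M_C$) remain perfectly smooth in this example despite the accumulation.

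Worse, the gap is not just in the extraction of a subsequence but in the very definition of $\bar{\mathcal{F}}$: a point $p = (x_0,0,0)$ with $x_0$ a non-endpoint of the Cantor set lies in $M_{irred} \setminus M_C$ yet lies on \emph{no} boundary plane $P_i^\alpha$, since $x_0$ is not the endpoint of any complementary interval. So ``original leaves plus boundary planes'' does not partition $M_{irred}$. The paper sidesteps all of this by arguing pointwise: given $p_k \to p$ with $p_k \in M_C$, it shows directly that $(e_1)_{p_k}$ converges (up to sign). If not, two subsequential limits $X \neq \pm Y$ would produce two distinct totally geodesic planes $Q,R$ through $p$, each separating the Hadamard manifold $M$; nearby leaves $\mathcal{F}_{q_k}$ and $\mathcal{F}_{r_k}$ would then be forced to intersect and hence coincide, contradicting $X \neq \pm Y$. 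This defines the leaf through $p$ as $\exp_p(\mathrm{span}(X,T))$ without ever enumerating components of $M_C$.
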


\begin{proof}
    By Lemma~\ref{lemma:c_stays_nilpotent}, through every point $p \in M_C$ there is a complete, flat, totally geodesic leaf $L_x$, which is given by $\exp_{p}(u e_1 + v T)$ for $u,v \in \R$.
    Consider a sequence of points $p_k \rightarrow p$ with $p_k \in M_C$ and $p \not\in M_C$.
    Since $T$ is smooth on all of $M$, $T_{p_k} \rightarrow T_{p}$.

    Next, suppose that $(e_1)_{p_k}$ does not converge to a unit vector at $p$.
    Then there must be two subsequences $q_k \rightarrow p$ and $r_k \rightarrow p$ with $(e_1)_{q_k} \rightarrow X$ and $(e_1)_{r_k} \rightarrow Y$ with $X \not= \pm Y$.
    Since $e_1$ is always perpendicular to $T$, then $X, Y$ are orthogonal to $T$.
    Defining $Q = \curly{\exp_{p}(uX + vT)| u,v \in \R}$ and $R = \curly{\exp_{p}(uY + vT)| u,v \in \R}$, then $\mathcal{F}_{q_k} \rightarrow Q$ and $\mathcal{F}_{r_k} \rightarrow R$.
    So $Q$ and $R$ intersect at $p$ and each separate $M$ into two halves.
    Then points for large $k$, $\mathcal{F}_{q_k}$ and $\mathcal{F}_{r_k}$ intersect and hence are equal.
    This contradicts $X \not= \pm Y$.
    Therefore $e_1, e_2$ and $\mathcal{F}$ extend to $M_{irred}$.

\end{proof}

In \cite{zeghib}, it is shown that every codimension one geodesic foliation of a smooth (not necessarily complete) manifold is locally Lipschitz.
Due to this, the Picard-Lindel\"of existence and uniqueness theorem for ODEs implies that there exists a unique $C^{1,1}$ curve orthogonal to the foliation through each point.
We apply this to our foliation $\mathcal{F}$ of $M_{irred}$.
\begin{prop}
\label{prop:plane_bundle}
    Suppose that $M$ is a complete, simply connected $3$-manifold with Ricci eigenvalues $(-1, -1, 0)$.
    For any point $p \in M_{irred}$, there exists a unique, maximal (in $M_{irred}$) $C^{1,1}$ integral curve $\gamma$ of $e_2$ which is orthogonal to $\mathcal{F}$ at every point.
    Furthermore, $\gamma$ intersects exactly once each leaf of $\mathcal{F}$ in the connected component of $M_{irred}$ containing $p$.
\end{prop}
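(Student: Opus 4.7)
The plan is to first obtain existence, uniqueness, and $C^{1,1}$ regularity of $\gamma$ from the cited Lipschitz regularity of $\mathcal{F}$ combined with Picard--Lindel\"of, then to prove injectivity via a signed-distance argument modeled on the end of the proof of Proposition~\ref{prop:g_curv_homog}, and finally to obtain surjectivity by showing that $\gamma$ sweeps through every connected component of $M_C$ inside the component $W$ of $M_{irred}$ containing $p$, extending across the flat boundary planes. For the first step, by Zeghib's theorem the codimension-one foliation $\mathcal{F}$ is locally Lipschitz on $M_{irred}$, so the unit vector field $e_2$ orthogonal to $\mathcal{F}$ (extending the smooth $e_2$ on $M_C$) is Lipschitz; Picard--Lindel\"of then provides a unique maximal integral curve $\gamma$ through any $p \in M_{irred}$, and $\norm{e_2} \equiv 1$ upgrades its regularity to $C^{1,1}$. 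Orthogonality to $\mathcal{F}$ holds by construction.

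For injectivity, let $W$ be the connected component of $M_{irred}$ containing $p$ and fix any leaf $\mathcal{F}_0 \subset W$. Because the Ricci eigenvalues $(-1,-1,0)$ force $\sec \leq 0$ and $M$ is simply connected, $M$ is a Hadamard manifold, $\mathcal{F}_0$ separates $M$, and the signed distance $t_0$ to $\mathcal{F}_0$ is smooth with unit gradient off $\mathcal{F}_0$. Repeating the argument at the end of the proof of Proposition~\ref{prop:g_curv_homog}: if $(t_0 \circ \gamma)'(s) = \chevron{\grad t_0, e_2}$ vanished at some $\gamma(s) \not\in \mathcal{F}_0$, then $\grad t_0 \in T\mathcal{F}_{\gamma(s)}$ would force the minimizing geodesic from $\gamma(s)$ to $\mathcal{F}_0$ to lie inside the totally geodesic leaf $\mathcal{F}_{\gamma(s)}$, making $\mathcal{F}_{\gamma(s)}$ and $\mathcal{F}_0$ intersect, a contradiction. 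Hence $t_0 \circ \gamma$ is strictly monotonic and $\gamma$ meets $\mathcal{F}_0$ in at most one point.

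For surjectivity, in any connected component $V$ of $M_C$ contained in $W$ I would use the Proposition~\ref{prop:g_curv_homog} coordinates $(x,u,v) \in (a_1,a_2) \times \R^2$. From the formula for $e_2$ in Lemma~\ref{lemma:g_complete}(d), any integral curve of $e_2$ satisfies $\dot x = (\cosh u - h(x)\sinh u)^{-1}$, which is positive and bounded below, together with $\tfrac{d}{dt}(u^2+v^2) = 0$. Hence $u$ and $v$ stay bounded while $x$ traces the full range $(a_1, a_2)$, so $\gamma \cap V$ meets every leaf of $V$. At each finite endpoint $a_i$, $\gamma$ approaches a boundary flat plane lying in $W$, and by Lipschitz continuation it extends across this plane into the adjacent component of $M_C$ in $W$ if one exists. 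Iterating and using that any complete totally geodesic plane in the Hadamard manifold $M$ has exactly two sides, $\gamma$ exhausts all of $W$.

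The main obstacle is this last surjectivity step: while the conservation law $u^2+v^2 = \mathrm{const}$ controls $\gamma$ within any single $M_C$-component, it remains to show that the Lipschitz extension across the boundary planes threads through every $M_C$-component and every boundary plane contained in $W$, which requires a careful topological argument using two-sidedness of totally geodesic planes and the connectedness of $W$. An alternative, cleaner wrap-up would be to argue that the set $A \subseteq W$ of points lying on a leaf met by $\gamma$ is both open (by foliated-chart transversality) and closed (via the above exhaustion across $M_C$-components and boundary planes), giving $A = W$ by connectedness.
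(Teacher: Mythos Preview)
Your existence/uniqueness step and your injectivity argument via monotonicity of the signed distance to a fixed leaf are correct; the paper's injectivity argument is slightly different (it locates a maximum of the unsigned distance and derives the same leaf-intersection contradiction) but yours is arguably cleaner. Your surjectivity argument within a single connected component $V$ of $M_C$ is also correct and is a nice use of the Proposition~\ref{prop:g_curv_homog} coordinates: the conservation law $u^2+v^2 = \mathrm{const}$ keeps $|u|$ bounded, so $\cosh u - h(x)\sinh u \leq e^{|u|}$ is bounded above and $\dot x$ is bounded below by a positive constant, forcing $\gamma$ to sweep through every $x$-level.

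The genuine gap is the one you flag, but it is not purely topological. A connected component $W$ of $M_{irred}$ can contain \emph{infinitely many} components $V_1, V_2, \ldots$ of $M_C$ whose boundary planes accumulate onto a leaf $P \subset W$ (e.g.\ when $f$ vanishes on an accumulating sequence of isolated points). Your conservation law is chart-dependent: the constant $R_i^2 = u^2+v^2$ is measured relative to whatever reference curve was used to build the Proposition~\ref{prop:g_curv_homog} chart on $V_i$, and these reference curves need not match up across the $V_i$. So you have no uniform lower bound on $\dot x$, and the traversal times need not sum to a finite value; $\gamma$ could be defined for all time yet never reach $P$. Your open-and-closed alternative has exactly the same gap: openness is immediate, but closedness requires ruling out that the times $t_k$ at which $\gamma$ meets leaves approaching $P$ escape to infinity.

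The paper handles surjectivity differently. It proves $\im(\exp^\perp_\gamma)$ is closed via a first-variation estimate: with $y(t) = d(\gamma(t), P)$, the Lipschitz bound on $\mathcal{F}$ gives $\langle e_2, \eta_t'\rangle \geq 1 - cy$, hence $y' \leq -1 + cy$, so $y \to 0$ in finite time. This is precisely the quantitative input your iteration is missing. The paper then proves convexity of $W$ and combines it with closedness to finish; if you insert the first-variation estimate to get closedness, your open-and-closed argument would actually conclude without needing the convexity step.
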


\begin{proof}

    Consider the maximal integral curve $\gamma$ of $e_2$ at some point $p \in M_{irred}$.
    We can assume that $\gamma$ is maximal in the connected component $V$ of $M_{irred}$ that contains $p$.
    Since $\gamma$ has unit speed, the domain of $\gamma: I \rightarrow V$ is a closed interval $I$ (possibly infinite or half-infinite).
    Define $\exp^\perp_{\gamma}$ to be $\exp$ restricted to the subset of $TM$ where $(x,V) \in TM$ is such that $x = \gamma(t)$ and $V$ is perpendicular to $\gamma'(t)$ for some $t$.
    We claim that $\exp^\perp_{\gamma}$ is onto $V$, i.e. that $\gamma$ intersects each leaf of $\mathcal{F}$ in $V$ once.
    We first prove that $\im(\exp^\perp_{\gamma})$ is closed.
    If not, then there exists a point $q \in V \setminus \im(\exp^\perp_\gamma)$ and a sequence of points $q_k \in \im(\exp^\perp_\gamma)$ with $q_k \rightarrow q$.
    Then $\mathcal{F}_{q_k} \rightarrow \mathcal{F}_q$.
    For each $q_k$, let $\gamma(t_k)$ be a point on $\gamma$ through $\mathcal{F}_{q_k}$.
    Since $I$ is closed, if $t_k$ is bounded, then the $t_k$ have a limit point $t_*$ in $I$, which implies that $\mathcal{F}_{t_*} = \mathcal{F}_q$, which is a contradiction.
    So we may assume that $t_k \rightarrow \infty$.
    Then $\mathcal{F}_{\gamma(t)} \rightarrow \mathcal{F}_q$ as $t \rightarrow \infty$.

    Let $\eta_t$ be the shortest path from $\gamma(t)$ to $\mathcal{F}_q$ and $y(t)$ be the length of $\eta_t$.
    Note that since $\mathcal{F}$ is Lipschitz, we have that $\chevron{e_2, \eta_t'} \geq 1 - c y(t)$ for some constant $c$, for any $\gamma(t)$ sufficiently close to $\mathcal{F}_q$.
    Considering the variation of geodesics $\eta_x$, the first arc-length variation formula shows that
    \begin{equation*}
        \frac{d}{dt} y = -\chevron{\gamma', \eta_t'} = - \chevron{e_2, \eta_t'} \leq -1 + c y.
    \end{equation*}
    When $0 < y < 1/(2c)$, it follows that $\frac{d}{dt} y < - 1/2$, and hence $y(t) \rightarrow 0$ in finite time.
    Since $I$ is closed, we again get a contradiction that $\mathcal{F}_q$ must be in $\exp^\perp_\gamma$.
    Hence $\im(\exp^\perp_\gamma)$ is closed.

    Now we want to show that $\im(\exp^\perp_\gamma)$ is all of $V$.

    First we argue that $V$ is convex.
    Suppose that there exists a geodesic $\mu:[a,b] \rightarrow \R$ with endpoints in $V$ but completely not contained in $V$.
    Then there exists a $t_0$ such that $\mu(t_0) \not \in V$.

    Since $V$ is closed, there exists a leaf $P \in \mathcal{F}$ which is the last leaf of $\mathcal{F}$ before $\mu(t_0)$.
    Define $U$ to be a subset of the unit vectors at $\mu(t_0)$ by
    \begin{equation*}
        U := \{X \in T_{\mu(t_0)}^1 M | \exp_{\mu(t_0)}( t X) \in P \mbox{ for some } t > 0 \}.
    \end{equation*}
    Note that $U$ is connected, and non-empty.
    It is open since $\exp_{\mu(t_0)}(t X)$ for $X \in U$ must be transverse to $P$ since otherwise the fact that $P$ is totally geodesic would imply that $\mu(t_0) \in P$.

    Let $U_M =  \curly{\exp(s v) | v \in U, s > 0 }$, which is open.
    Then its boundary 
    \[ \partial U_M = \curly{\exp(s v) | v \in \partial U, t \geq 0}. \]
    Suppose that $\partial U_M$ is not disjoint from $V$.
    Then there is a $Q \in \mathcal{F}$ intersecting $\partial U_M$.
    Since $Q$ is totally geodesic and does not contain $\mu(t_0)$, $Q$ must be transverse to $\partial U_M$.
    So $Q$ also intersects $U_M$.
    Then there is a geodesic from $\mu(t_0)$ to a point on $P$ that intersects $Q$ transversely.
    Since a geodesic in a $\sec \leq 0$ space can only cross a transverse geodesic hyperplane once, $Q$ must separate $\mu(t_0)$ from a point on $P$.
    Therefore it separates $\mu(t_0)$ from all of $P$ since $Q$ and $P$ are disjoint.
    This contradicts the fact that no plane of $V$ lies on $\mu$ between $P$ and $\mu(t_0)$.

    Therefore $U_M$ is an open subset of $M$ whose boundary does not interesct $V$.
    Then $V \cap U_M$ and $V \cap (M \setminus U_M)$ are two disjoint open sets covering $V$ which is a contradiction with $V$ being connected.
    Therefore $\im(\exp^\perp_\gamma)$ must be convex.

    Now we show that $\im(\exp^\perp_\gamma)$ is onto $V$.
    Suppose there is $x \in V$ and $x \not\in \im(\exp^\perp_\gamma)$.
    Then the geodesic from $\gamma(0)$ to $x$ stays in $V$ and let $L$ be the last leaf of $\mathcal{F}$ in $\im(\exp^\perp_\gamma)$ it passes through.
    Then $L = \exp^\perp_{\gamma(t_0)}$ for some $t_0$.
    This contradicts maximality of $\gamma$ since $L$ (and hence $\gamma(t_0)$) must be in the interior of $V$.

    We will now see that $\gamma$ intersects each leaf of the foliation at most once.
    Take $L_0$ to be the leaf of $\mathcal{F}$ through $\gamma(t_0)$, for some time $t_0$.
    Suppose that $\gamma$ intersects $L_0$ at some time $t_1 > t_0$.
    Let $t_*$ be the time in $[t_0, t_1]$ where $\gamma$ is maximally far from $L_0$.
    Then $\gamma$ must be orthogonal to the shortest geodesic $\eta$ from $L_0$ to $\gamma(t_*)$.
    Since $\gamma$ is orthogonal to the leaves of $\mathcal{F}$, the totally geodesic leaf $L_*$ through $\gamma(t_*)$ must contain the geodesic $\eta$.
    Note that $L_* \not= L_0$ since $\gamma(t_*)$ is maximally far from $L_0$ and $\gamma$ is orthogonal to $L_0$ at $t_0$.
    But $L_*$ intersects $L_0$, a contradiction.

    Hence $\gamma$ intersects each leaf of $\mathcal{F}$ exactly once.
\end{proof}

\section{Geodesic Foliations of \texorpdfstring{$\mathbb{H}^2$}{H2}}
\label{sec:geodesic_foliations_H2}
We now consider a lower-dimensional analog of $\mathcal{F}$, namely foliations of $\mathbb{H}^2$ by complete geodesics.
This will be used in Theorem~\ref{thm:irreducible_examples}.
Again, these foliations are Lipschitz by \cite{zeghib} and have $C^{1,1}$ curves orthogonal to them.
We study some basic properties of these curves.

Let $H$ be the turning angle of $\gamma$, i.e. the angle between $\gamma'(x)$ and $V$ where $V$ is a parallel translation of $\gamma'(0)$ along $\gamma$.
The following lemma about turning angles is no doubt well-known, but we provide a proof for completeness since we were unable to find a reference for it.

\begin{lemma}
    Suppose that $H: \R \rightarrow \R$ is locally Lipschitz and $\Sigma$ is a complete surface.
    For any starting point $p_0$ and initial unit vector $v_0$, there exists a unique arc-length parametrized $C^{1,1}$ curve $\gamma: \R \rightarrow \Sigma$ whose turning angle at $\gamma(t)$ is $H(t)$.
    \label{lemma:turning_angle_curve}
\end{lemma}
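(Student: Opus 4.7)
The plan is to recast the condition that $\gamma$ have prescribed turning angle $H$ as an ODE on the unit tangent bundle $T^1\Sigma$, solve it with Picard--Lindel\"of in each local chart, and extend globally using completeness of $\Sigma$.

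Concretely, I fix an oriented orthonormal basis $\{v_0, J v_0\}$ of $T_{p_0}\Sigma$ (passing to the oriented double cover if $\Sigma$ is non-orientable), and let $V(t)$ denote the parallel transport of $v_0$ along the curve $\gamma$ to be constructed. Using the convention that $H$ is measured with respect to this orientation, the turning angle condition becomes
\[
\gamma'(t) \;=\; \cos(H(t))\, V(t) \;+\; \sin(H(t))\, J V(t),
\]
together with the parallel transport equation $D_t V(t) = 0$. In any local chart on $\Sigma$, writing $V$ in terms of an angle $\theta$ relative to a fixed smooth local frame reduces both equations to a first-order ODE system for $(p(t), \theta(t))$. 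The right-hand side is continuous in $t$ (since $H$ is continuous) and Lipschitz in the state variables, with local Lipschitz constant depending only on local bounds of $H$ and of the Christoffel symbols of $g$. Picard--Lindel\"of then yields a unique $C^1$ solution on a maximal open interval containing $0$.

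To extend to all of $\R$, I use that $\gamma$ has unit speed, so over any finite time interval $[-T, T]$ the image $\gamma([-T, T])$ lies in the closed ball $\overline{B}(p_0, T) \subset \Sigma$, which is compact by Hopf--Rinow. Since $V(t)$ remains in the compact fiber of $T^1\Sigma$, the solution stays in a compact subset of $T^1\Sigma$ on any bounded time interval and can be continued past any finite endpoint of its maximal interval; hence the interval of existence is all of $\R$. Uniqueness in the original statement is then inherited from uniqueness in the ODE.

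For $C^{1,1}$ regularity, I differentiate $\gamma' = \cos(H) V + \sin(H) J V$ covariantly along $\gamma$ and use $D_t V = 0$ to obtain $D_t \gamma'(t) = H'(t)\, J \gamma'(t)$ for a.e.\ $t$. Since $H$ is locally Lipschitz, $H'$ is in $L^\infty_{\mathrm{loc}}$, so $D_t \gamma'$ is locally essentially bounded and $\gamma'$ is locally Lipschitz, i.e.\ $\gamma \in C^{1,1}$. The main subtlety is keeping the turning angle well-defined, which I handle by working with the frame bundle rather than assuming global orientability; once that is set up, the rest is a standard ODE argument.
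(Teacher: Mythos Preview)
Your proposal is correct and follows essentially the same approach as the paper: both recast the turning-angle condition as a time-dependent ODE on the (unit) tangent bundle using a parallel reference vector and the rotation by $H(t)$, invoke Picard--Lindel\"of locally, and then extend globally via completeness and the unit-speed bound. The only cosmetic differences are that the paper tracks the parallel vector $Y$ in $TU$ rather than an angle $\theta$ in $T^1\Sigma$, and argues global existence via a quantitative Picard--Lindel\"of time bound on balls $B_{2R}$ rather than the compactness-of-solution-set argument you use.
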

\begin{proof}
    Choose local coordinates $\vec{x}$ of a neighborhood $U \subset \Sigma$ of $p_0$.
    We proceed by modifying the standard argument for the existence of the geodesic flow on $T U$.
    Choose coordinates $(\vec{x}, \vec{y})$ on $T U$ such that $\vec{y} = \sum_{i=1}^2 y_i \pder{}{x_i}$.
    For two vectors $\vec{y}, \vec{z}$ in $T_{\vec{x}} U$, define $\vec{v}_{\vec{x}}(\vec{y},\vec{z})= - \sum_{i,j} \Gamma^{k}_{ij} y_i z_j \pder{}{y_k}$ where $\Gamma^{k}_{ij}$ are the Christoffel symbols at $\vec{x}$.

    Define $\Theta_{\vec{x}, r}: T_{\vec{x}}U \rightarrow T_{\vec{x}}U$ to be the rotation of of $T_\vec{x} U$ by angle $r$ at each $\vec{x}$ and $W$, a time-dependent vector field on $T U$ given by
    \begin{equation*}
        W(\vec{x}, \vec{y}, t) = (\Theta_{\vec{x}, H(t)}(\vec{y}), \vec{v}_{\vec{x}}(\vec{y}, \Theta_{H(t)}(\vec{y})).
    \end{equation*}

    Then the ODE defined by $\frac{d}{dt} (\vec{x}, \vec{y}) = W(\vec{x}, \vec{y}, t)$ is continuous in $t$ and smooth in $(\vec{x}, \vec{y})$.
    By the standard Picard-Lindel\"of theorem, there exists a unique solution $(\gamma(t), Y(t))$ where $\gamma$ is a $C^{1,1}$ curve and $Y$ a vector field along that curve.
    We choose initial conditions so that $\gamma(0) = p_0$ and $Y(0) = \gamma'(0) = v_0$.

    Now we compute $\nabla_{\gamma'} Y$.
    Writing $Y = (y_1, y_2)$ and $\gamma' = (z_1, z_2)$ we get that
    \begin{align*}
        \nabla_{\gamma'} Y
            &= \sum_{k} \paren{\sum_{ij} \Gamma^k_{ij} y_i z_j + \gamma'(y_k)} \pder{}{x_k} \\
            &= -\vec{v}_{\vec{x}}(Y, \gamma'(t)) + \sum_{k} \frac{d}{dt}(y_k) \pder{}{x_k}\\
            &= -\vec{v}_{\vec{x}}(\gamma'(t), Y) + \vec{v}_{\vec{x}}(\gamma'(t), Y) = 0
    \end{align*}
    so $Y$ is parallel along $\gamma(t)$ and $\norm{\gamma'} = 1$.
    By the ODE, $\gamma' = \Theta_{H(t)}(Y)$, and $\gamma$ has turning angle $H(t)$.

    Lastly, note that Picard-Lindel\"of gives existence of $\gamma$ and $Y$ for at least time $1/C$ where $C$ is the maximum derivative of components of $W$ on $U$.
    For $R > 0$, there exists such a $C$ on the ball $B_{2R}$ centered at $\gamma(0)$.
    Therefore we may repeatedly extend the existence of $\gamma$ until either it exists for at least time $R$ or it has left the ball $B_R$.
    Since $\gamma$ has unit speed, the second case cannot occur without also existing until at least time $R$.
    Taking $R \rightarrow \infty$, $\gamma$ exists for all time.
\end{proof}

Suppose that $\gamma: \R \rightarrow \mathbb{H}^2$ is a $C^{1,1}$ curve that is arc-length parameterized.
Let $X$ be a unit vector field along $\gamma$ that is perpendicular to $\gamma'$ everywhere.
Define $\exp^\perp: \R^2 \rightarrow \mathbb{H}^2$ by
\begin{equation*}
\exp^\perp(s,t) = \exp_{\gamma(s)}(t X).
\end{equation*}

\begin{prop}
    \label{prop:foliation_H_lipschitz}
    Fix a point $p \in \mathbb{H}^2$ and a vector $V \in T_p \mathbb{H}^2$.
    There is a bijection between Lipschitz functions $H: \R \rightarrow \R$ with Lipschitz constant $1$ and arc-length parameterized curves $\gamma$  such that
    \begin{enumerate}[(a)]
        \item $\gamma(0) = p$,
        \item $\gamma'(0) = V$, and
        \item the curves $\eta_s: t \mapsto exp^\perp(s,t)$ form a foliation of $\mathbb{H}^2$.
    \end{enumerate}
    In particular, $H$ is the turning angle of the corresponding $\gamma$.
\end{prop}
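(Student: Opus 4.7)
The bijection is $H \mapsto \gamma$ via Lemma~\ref{lemma:turning_angle_curve} with inverse $\gamma \mapsto$ turning angle; these are mutual inverses by construction, so the content is that property~(c) holds precisely when $H$ has Lipschitz constant $1$. The key calculation is the Jacobi field $J(s,t) := \partial_s \exp^\perp(s,t)$ along $\eta_s$. If $(V_\parallel, W_\parallel)$ is a parallel frame along $\gamma$ with $\gamma' = \cos H \cdot V_\parallel + \sin H \cdot W_\parallel$ and $X = -\sin H \cdot V_\parallel + \cos H \cdot W_\parallel$, one computes $\nabla_{\gamma'} X = -H'(s)\,\gamma'(s)$, so $J$ has initial data $J(s,0) = \gamma'(s)$ and $\nabla_t J(s,0) = -H'(s)\,\gamma'(s)$, both in the direction $E_2(t)$, the parallel transport of $\gamma'(s)$ along $\eta_s$. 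The Jacobi equation in curvature $-1$ then yields
\begin{equation*}
J(s,t) = \bigl(\cosh t - H'(s)\sinh t\bigr)\, E_2(t),
\end{equation*}
and by the Gauss lemma the pulled-back metric is $(\exp^\perp)^*g_{\mathbb{H}^2} = dt^2 + (\cosh t - H'(s)\sinh t)^2\,ds^2$.

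\textbf{Forward direction.} Suppose $H$ is Lipschitz with constant $1$, so $|H'| \leq 1$ a.e. Then $\cosh t - H'(s)\sinh t \geq e^{-|t|} > 0$ everywhere, so the pulled-back metric is uniformly positive-definite. I would show it is complete on $\R^2$ by bounding metric balls in coordinates: any path of length $L$ from $(s_0, t_0)$ has $|t - t_0| \leq L$ (from the $t$-projection), and on the resulting strip $|t| \leq |t_0| + L$ the coefficient of $ds^2$ is at least $e^{-(|t_0|+L)}$, so also $|s - s_0| \leq L e^{|t_0|+L}$. Hence closed balls are coordinate-compact and the pulled-back metric is bi-Lipschitz to the Euclidean metric on them, so Cauchy sequences converge. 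A local isometry from a complete length space onto its image covers a connected Riemannian manifold, and since $\mathbb{H}^2$ is simply connected, $\exp^\perp : \R^2 \to \mathbb{H}^2$ is a homeomorphism, which is property~(c).

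\textbf{Backward direction.} Conversely, assume the $\eta_s$ foliate $\mathbb{H}^2$. By \cite{zeghib}, codimension-one geodesic foliations are locally Lipschitz, so $\gamma$ is $C^{1,1}$ and $H$ is Lipschitz. Suppose for contradiction that $|H'(s_0)| > 1$ at some Lebesgue point $s_0$; then $J(s_0, t_0) = 0$ at $t_0 = \operatorname{arctanh}(1/H'(s_0))$, so $\partial_s \exp^\perp$ vanishes at $(s_0, t_0)$. A fold-singularity analysis near the evolute of $\gamma$ on $\{|H'| > 1\}$ (to which nearby normals are tangent) produces two distinct $s_1, s_2$ near $s_0$ with $\eta_{s_1} \cap \eta_{s_2} \neq \emptyset$, contradicting the foliation property. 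Therefore $|H'| \leq 1$ a.e., i.e., $H$ is Lipschitz with constant $1$.

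\textbf{Main obstacle.} The principal difficulty is in the backward direction: because $\gamma$ is only $C^{1,1}$, the derivative $H'$ exists only almost everywhere and may be discontinuous, so the classical fold theorem for smooth maps does not apply directly. One must work with Lebesgue density points of $\{|H'| > 1\}$ and promote the infinitesimal vanishing of $J$ to an actual coincidence of two orthogonal geodesics. An alternative is a Gauss--Bonnet argument on a region bounded by $\gamma|_{[s_0,s_1]}$ and two orthogonal geodesics, comparing the signed total turning $H(s_1) - H(s_0)$ against the length $s_1 - s_0$ through the enclosed area.
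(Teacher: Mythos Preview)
Your forward direction takes a genuinely different route from the paper. You pull back the hyperbolic metric through $\exp^\perp$ and run a completeness-plus-covering argument; the paper instead fixes an arbitrary $q \in \mathbb{H}^2$ and shows that $L(s) := \cosh d(q,\gamma(s))$ satisfies $L'' = \cosh d + h\sinh d\,\langle\grad d,(\gamma')^\perp\rangle \geq e^{-d} > 0$ almost everywhere (from $|h|\le 1$), so $L$ is strictly convex with a unique minimum, and hence $q$ lies on exactly one $\eta_s$. That argument needs only that $L'$ is absolutely continuous, which holds since $\gamma$ is $C^{1,1}$, so the a.e.\ inequality integrates cleanly to strict convexity. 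Your approach has a gap at this regularity: the pulled-back ``metric'' $dt^2 + (\cosh t - H'(s)\sinh t)^2\,ds^2$ has merely $L^\infty$ coefficients (since $H'$ need not be continuous), and the statement ``a local isometry from a complete space is a covering map'' is formulated for continuous Riemannian metrics. Concretely, you have not shown that $\exp^\perp$ is a local homeomorphism, which the covering argument requires; a Lipschitz map whose differential is a.e.\ invertible is not automatically locally injective, and the path-lifting ODE you would solve has a discontinuous right-hand side. This is probably repairable, but the paper's convexity trick sidesteps the issue entirely and is worth knowing.

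For the backward direction your outline coincides with the paper's: invoke \cite{zeghib} to get $H$ Lipschitz, find $|h(s_0)| > 1$, and exhibit a zero of the normal Jacobi field at $\tanh t_0 = 1/h(s_0)$. The obstacle you flag---promoting the infinitesimal degeneration to an actual intersection of two normals when $h$ is only $L^\infty$---is a legitimate concern, and the paper's proof is equally brief here: it simply asserts that the focal point prevents the orthogonal geodesics from foliating. So on this half you are not missing anything the paper supplies; your proposed Gauss--Bonnet alternative (comparing $H(s_1)-H(s_0)$ to $s_1-s_0$ on a region bounded by $\gamma$ and two normals) would in fact make this step more honest than the paper's version.
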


\begin{proof}
    Lemma~\ref{lemma:turning_angle_curve} shows that $\gamma$ is determined uniquely by its starting conditions and turning angle $H$.

    Next, we assume that $\gamma$ satisfies (a) and (b) and has turning angle $H$.
    To see that (c) holds, it suffices to show that each point $p \in \mathbb{H}^2$ has a unique closest point $q$ on $\gamma$.
    Then $p$ lies on the orthogonal geodesic at $q$ and uniqueness implies that the orthogonal geodesics are all disjoint, and hence foliate $\mathbb{H}^2$.

    Let $\delta(q) = d(p,q)$ be the distance function to $p$.
    By standard hyperbolic trigonometry,
    \[ \nabla_{X} \grad \delta = \coth(\delta) \chevron{X, \grad \delta^\perp} \grad \delta^\perp \]
    where $\grad \delta^\perp$ is a unit vector orthogonal to $\grad \delta$.
    Note that almost everywhere $\nabla_{\gamma'} \gamma' = h(t) (\gamma')^\perp$ where $h = H'$ and $(\gamma')^\perp$ is a unit vector orthogonal to $\gamma'$.

    The case where $\gamma$ is smooth is shown in \cite{ferus_geod_foliations}, and we follow the same strategy.
    Define $L(q) = \cosh(\delta(q))$.
    Since $H$ is Lipschitz, $\gamma$ is in-fact $C^{1,1}$ and is twice-differentiable almost everywhere.
    So $L \circ \gamma$ is twice differentiable a.e. and, therefore 
    \begin{align*}
    (L \circ \gamma)'  &= \chevron{\grad L, \gamma'} = \sinh(\delta) \chevron{ \grad \delta, \gamma'},\\
    (L \circ \gamma)'' 
        &= \cosh(\delta) \chevron{\grad \delta, \gamma'}^2 
                + \sinh(\delta) \brak{
                    \coth(\delta) \chevron{\grad \delta^\perp, \gamma'}^2
                    + \chevron{\grad \delta, h(t) (\gamma')^{\perp}}
                    } \\
        &= \cosh(\delta) + h(t) \sinh(\delta) \chevron{\grad \delta, (\gamma')^{\perp}}.
    \end{align*}

    Since the Lipschitz constant of $H$ is 1,  $\abs{h} \leq 1$, and therefore $(L \circ \gamma)'' \geq e^{-\delta} > 0$ a.e..
    Therefore $L$ is convex and has a unique minimum.
    Since $L$ is monotone in $\delta$, $\delta$ too has a unique minimum and (c) holds.

    For the converse, we assume that $\gamma$ has turning angle $H$ that does not have Lipschitz constant 1.
    In \cite{zeghib} it is shown that any co-dimension one geodesic foliation of a smooth manifold is locally Lipschitz.
    So $H$ is differentiable a.e.
    If $H$ does not have Lipschitz constant 1, then there is a point $x$ where $\abs{h(x)} > 1$, where $h = H'$.
    Then it is possible to choose $\epsilon$ such that $\cosh \epsilon + h \sinh \epsilon = 0$.
    Note that the Jacobi field of geodesics orthogonal to $\gamma$ is $(\cosh t + h(x) \sinh(t)) X$ (where $X$ is the parallel transport of $\gamma'$ along the orthogonal geodesic).
    Then  $\gamma$ has a focal point at distance $\epsilon$ from $\gamma(x)$ and so the orthogonal geodesics do not foliate $\mathbb{H}^2$.
\end{proof}

Recall that the geodesic curvature of a curve is the derivative of its turning angle $H(t)$.
\begin{corollary}
    If the orthogonal geodesics of $\gamma$ foliate $\mathbb{H}^2$, then $\gamma$ is $C^{1,1}$ and its geodesic curvature $h$ satisfies $\abs{h} \leq 1$ almost everywhere.
    \label{cor:foliating_curves}
\end{corollary}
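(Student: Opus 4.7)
The plan is to derive the corollary directly from the contrapositive of the reverse direction of Proposition~\ref{prop:foliation_H_lipschitz}, after first verifying that the hypotheses of that proposition apply.

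First, I would invoke Zeghib's theorem (cited from \cite{zeghib}) to conclude that the codimension-one geodesic foliation of $\mathbb{H}^2$ given by the orthogonal geodesics of $\gamma$ is locally Lipschitz. This means that the unit vector field $X$ tangent to the leaves (chosen consistently on a neighborhood) is locally Lipschitz, and therefore its orthogonal unit field, which is $\pm \gamma'$ along $\gamma$, is locally Lipschitz as well. Since $\gamma$ is an integral curve of this locally Lipschitz field, $\gamma$ is $C^{1,1}$, and its turning angle $H$ is a well-defined locally Lipschitz function; in particular the geodesic curvature $h = H'$ exists almost everywhere.

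Next, I would apply Proposition~\ref{prop:foliation_H_lipschitz}, with initial data $p = \gamma(0)$ and $V = \gamma'(0)$, to the turning angle $H$ of $\gamma$. The reverse direction of that proposition shows that if $H$ fails to have Lipschitz constant $1$ -- that is, if $|h(x)| > 1$ at some point of differentiability $x$ -- then one can choose $\epsilon$ with $\cosh\epsilon + h(x)\sinh\epsilon = 0$, and the Jacobi-field computation there shows that $\gamma$ has a focal point at distance $\epsilon$, so the orthogonal geodesics cannot foliate $\mathbb{H}^2$. Contrapositively, since by hypothesis the orthogonal geodesics do foliate $\mathbb{H}^2$, we must have $|h| \leq 1$ almost everywhere, completing the proof.

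There is no real obstacle here; the content of the corollary is already contained in Proposition~\ref{prop:foliation_H_lipschitz} and the preceding Lipschitz regularity result. The only subtlety worth stating carefully is why $\gamma$ itself is $C^{1,1}$ a priori (rather than just continuous), which is where the appeal to \cite{zeghib} is used so that turning angles and geodesic curvature are even meaningful notions to which the proposition can be applied.
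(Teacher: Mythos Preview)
Your proposal is correct and matches the paper's intent exactly: the corollary is stated without proof because it is an immediate consequence of Proposition~\ref{prop:foliation_H_lipschitz} together with Zeghib's regularity result, and you have identified precisely those two ingredients. The only point to note is that the paper already records, in the paragraph preceding the proposition and again in the converse portion of its proof, both the appeal to \cite{zeghib} for the $C^{1,1}$ regularity and the Jacobi-field/focal-point argument for $|h|\le 1$, so your write-up simply makes explicit what the paper leaves implicit.
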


\section{Locally Irreducible Metrics}
\label{sec:locally_irreducible}

In this section, we consider the case where $M$ has Ricci eigenvalues $(-1, -1, 0)$ and is simply connected and locally irreducible, i.e. $M = M_{irred}$.

We first show that the metric has the form as desired in  Theorem~\ref{thm:continuous_coords}.
\begin{proof}[Proof of Theorem~\ref{thm:continuous_coords}]
By Proposition~\ref{prop:plane_bundle}, there exists a maximal, unit speed $C^{1,1}$ curve $\gamma: \R \rightarrow M$ everywhere orthogonal to $\mathcal{F}$ such that each point of $M$ lies on $\mathcal{F}_{\gamma(t)}$ for some $t$.
Define $f(x) := a(\gamma(x))$ and $h(x) = \beta(\gamma(x))$, where $h(x)$ is defined only on the set $\{x \in \R: f(x) \not= 0\}$.

Define $x(p)$ such that $\mathcal{F}_{\gamma(x)} = \mathcal{F}_p$, and $u(p),v(p)$ such that $p = \exp_{\gamma(x)} (u e_1 + v T)$.
Then $(x,u,v)$ are smooth coordinates on each connected component of $M_C$ and they are Lipschitz since $\mathcal{F}$ is Lipschitz.

By Proposition~\ref{prop:g_curv_homog}, the metric on $M_C$ has the desired form and since $\overline{M_C} = M$, the theorem follows.
\end{proof}

\begin{corollary}
\label{cor:smooth_foliation}
If $\mathcal{F}$ is a smooth foliation, then $f, h$ are also smooth.
\end{corollary}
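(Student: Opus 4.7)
The plan is to upgrade the regularity of the frame $\{e_1, e_2, T\}$, and thereby of $\gamma$ and the structural functions $a, \beta$, once we assume the distribution tangent to $\mathcal{F}$ is smooth.

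First, I would observe that smoothness of the foliation $\mathcal{F}$ means precisely that its tangent distribution $D = \operatorname{span}(e_1, T)$ is a smooth rank-$2$ subbundle of $TM$. The orthogonal complement $D^\perp = \operatorname{span}(e_2)$ is then also smooth. The nullity field $T$ was already shown to be globally smooth in Section~\ref{sec:preliminaries}, and $T$ is a section of $D$; choosing a smooth unit section of $D$ pointwise orthogonal to $T$ produces a smooth vector field $\tilde e_1$ on $M$ (perhaps only up to sign ambiguity, which at worst requires passing to a double cover, and which only affects $f, h$ by the sign ambiguity already stated in Theorem~\ref{thm:continuous_coords}) that coincides with $e_1$ on $M_C$. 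Similarly a smooth unit section of $D^\perp$ gives a smooth extension $\tilde e_2$ of $e_2$.

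Next, by Proposition~\ref{prop:plane_bundle} the curve $\gamma$ is the maximal integral curve of $\tilde e_2$. Since $\tilde e_2$ is now a smooth vector field, the standard regularity theory for ODEs promotes $\gamma$ from $C^{1,1}$ to $C^\infty$. The function $a$ was shown in Section~\ref{sec:preliminaries} to be a smooth function on all of $M$ (via the characteristic polynomial of $\Theta C$), so $f(x) = a(\gamma(x))$ is smooth as a composition of smooth maps.

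It remains to treat $h$, which a priori is only defined on $\{x : f(x) \neq 0\}$. On $M_C$ the identity $\nabla_{e_2} e_2 = \beta e_1$ from \eqref{eqn:start_cov_derivs} gives $\beta = \chevron{\nabla_{e_2} e_2, e_1}$, so the formula $\tilde\beta := \chevron{\nabla_{\tilde e_2} \tilde e_2, \tilde e_1}$ defines a smooth function on all of $M$ that agrees with $\beta$ on $M_C$. Then $h(x) := \tilde\beta(\gamma(x))$ is smooth on $\R$ and agrees with $\beta \circ \gamma$ wherever $f \neq 0$, which uniquely identifies it as the $h$ of Theorem~\ref{thm:continuous_coords} (since $\{f \neq 0\}$ is dense by local irreducibility and $h$ is only determined a.e.\ as the derivative of a Lipschitz function). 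The only obstacle is the sign/orientability bookkeeping for $\tilde e_1, \tilde e_2$, which is harmless given the sign ambiguity already inherent in $f$ and $h$; no analytic work beyond the smooth ODE theory and the smooth extension of the frame is required.
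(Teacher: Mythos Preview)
Your argument is correct and follows exactly the reasoning the paper intends: the paper states this corollary without proof, but the paragraph immediately after (justifying Corollary~\ref{cor:analytic}) spells out the same chain of implications in the analytic category---smoothness of $\mathcal{F}$ gives smoothness of the frame $e_1, e_2, T$, hence of the integral curve $\gamma$ of $e_2$, hence of $f = a\circ\gamma$ and $h = \beta\circ\gamma$. Your treatment of $h$ via $\tilde\beta = \chevron{\nabla_{\tilde e_2}\tilde e_2, \tilde e_1}$ is a clean way to make the extension across $\{f=0\}$ explicit, which the paper leaves implicit.
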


An immediate corollary of these results is Corollary~\ref{cor:analytic}, which gives a classification of the case where $M$ is irreducible and analytic.
Since $M$ is analytic, $C$ is analytic and hence $\mathcal{F}$ is analytic on $M$.
Hence $T$, $e_1$ and $e_2$ are analytic and so too are $f, h$.

We will next present some examples of Theorem~\ref{thm:irreducible_examples} and then give its proof.

\begin{example}
    \label{example:irreducible_1}
    Suppose $H$ is as in Theorem~\ref{thm:irreducible_examples} and is smooth.
    Then the metric $g$ is just given by \eqref{eqn:g_curv_homog}.
    Let $\gamma$ be the path in $\mathbb{H}^2$ with turning angle $H$ from Proposition~\ref{prop:foliation_H_lipschitz}.
    Then the orthogonal curves of $\gamma$ foliation $\mathbb{H}^2$ and we can put $g$ on $\mathbb{H}^2 \times \R$  using coordinates with $(x,u,v)$ is just $(\exp_{\gamma(x)} (u (\gamma')^\perp), v)$.
    In the case where $f = 0$, this $g$ becomes the standard metric on $\mathbb{H}^2 \times \R$.
    This gives the strategy for the proof of the theorem even when $H$ is not smooth.
\end{example}

\begin{example}
    \label{example:irreducible_2}
    Each $H$ corresponds, by Proposition \ref{prop:foliation_H_lipschitz}, to a $C^{1,1}$ curve $\gamma$ in $\mathbb{H}^2$ and $h$ is the geodesic curvature of $\gamma$.
    In Figure~\ref{fig:irreducible_examples}, we see three examples of curves $\gamma$ with their corresponding orthogonal geodesics, in the Poincar\'e disk model of $\mathbb{H}^2$.
    The first two are smooth curves, with $H(x) = 0$ and $H(x) = x$, so they have $h(x) = 0$ and $h(x) = 1$, respectively.
    The third curve is only $C^{1,1}$ and has one non-smooth point $\gamma(0)$.
    On the left half, it has $h(x) = 1$ and on the right $h(x) = -1$.
    Any choice of smooth $f(x)$ works for the first two examples.
    For the last example, any smooth $f(x)$ works as long as $f^{(k)}(0) = 0$ for all $k$.
    Therefore this demonstrates that $h$ need not even be continuous.
\end{example}

\begin{example}
    \label{example:irreducible_4}
    One may also choose $H$ such that $h$ is non-smooth on a Cantor set.
    Consider the trinary expansion of numbers in $[0,1]$.
    Define $h(t)$ to be $0$ if $1$ never occurs in the expansion, and $(-1)^n$ if the first $1$ occurs in the $n$th digit.
    Defining $h(t) = 0$ outside of $[0,1]$, $h(t)$ has discontinuities at a Cantor set.
    As $h(t)$ is the difference of two indicator functions, it is Lebesgue integrable.
    Let $H(t) = \int_0^t h(s) ds$.
    Then take any choice of $f$ which goes to zero to infinite order on the Cantor set and is nonzero elsewhere.
    See Figure~\ref{fig:cantor_set_example} for the corresponding foliation in $\mathbb{H}^2$.
\end{example}

\begin{proof}[Proof of Theorem~\ref{thm:irreducible_examples}]
    First, we apply Lemma~\ref{lemma:turning_angle_curve} to get a $C^{1,1}$ curve $\gamma$ in $\mathbb{H}^2$ with turning angle $H$.

    We proceed by defining $g_f$ a smooth symmetric tensor on $M = \mathbb{H}^2 \times \R$ such that $g = g_{\mathbb{H}^2 \times \R} + g_f$ is the desired metric, where $g_{\mathbb{H}^2 \times \R}$ is the product metric.
    Note that we can embed $\gamma$ into $M$ by $(\gamma(x),0)$, and we call this embedding $\gamma$ as well for simplicity.
    Let $e_1$ be a unit vector field along $\gamma$ which is orthogonal to $\gamma'$ in $\mathbb{H}^2$, and let $e_3$ be a unit vector field in the $\R$ factor of $M$.
    There are $C^0$ coordinates $(x,u,v)$ of $M$ such that  $p \in M$ has coordinates $(x,u,v)$ if $p = \exp_{\gamma(x)}( u e_1 + v e_3)$.
    Define $e_2$ to be a unit vector field  orthogonal to $\{e_1, e_3\}$.

    Let $S \subset \R$ be the set of $x$ values such that $\gamma$ is locally smooth at $\gamma(x)$.
    Then there is a subset $S_M \subset M$ of points $p$ such that $x(p) \in S$.
    $S_M$ is the set of points where the $(x,u,v)$ coordinates are locally smooth.

    Note that $(M,g_{\mathbb{H}^2 \times \R})$ has Ricci eigenvalues $(-1,-1,0)$ with $C = 0$ and on $S_M$ has a smooth foliation by complete totally geodesic planes and hence the metric is of the form \eqref{eqn:g_curv_homog} on $S_M$ by Proposition~\ref{prop:g_curv_homog}.
    Therefore, the vector $e_3 = T$ and $\{e_1,e_2,T\}$ satisfy all the equations of \eqref{eqn:start_cov_derivs}-\eqref{eqn:end_cov_derivs} when taking covariant derivatives with the Levi-Civita connection of $g_{\mathbb{H}^2 \times \R}$ where $f(x) = 0$ in those equations and $h(x) := \chevron{\nabla_{\gamma'} \gamma', e_1}$.
    Moreover, the contents of Lemma~\ref{lemma:g_complete} also apply in $S_M$.
    (Our goal is to modify $g_{\mathbb{H}^2 \times \R}$ to make our choice of $f(x)$ the one that occurs in these covariant derivatives.)

    \begin{figure}[h]
    \centering
    \includegraphics[scale=0.5]{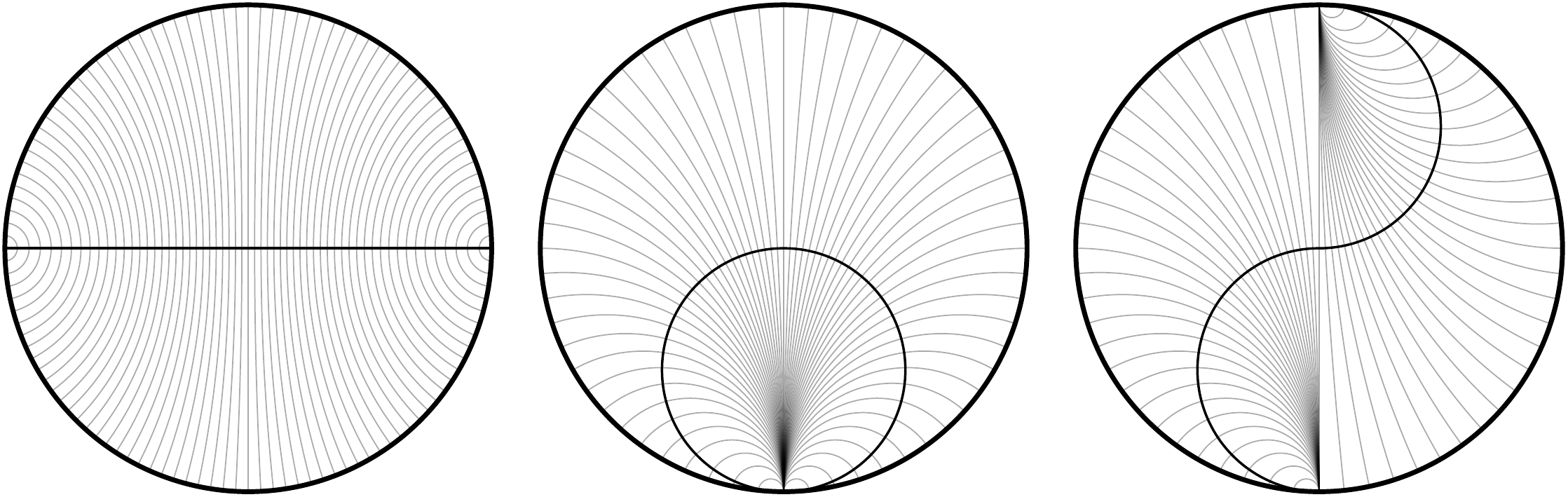}
    \caption{Three examples of possible choices of $H$ for Example~\ref{example:irreducible_2}, given by their corresponding paths in the Poincar\'e disk model of $\mathbb{H}^2$ along with their orthogonal, geodesic foliation.
    Note that the third example demonstrates that $h$ may be non-continuous.}
    \label{fig:irreducible_examples}
    \end{figure}

    \begin{figure}[h]
    \centering
    \includegraphics[scale=0.3]{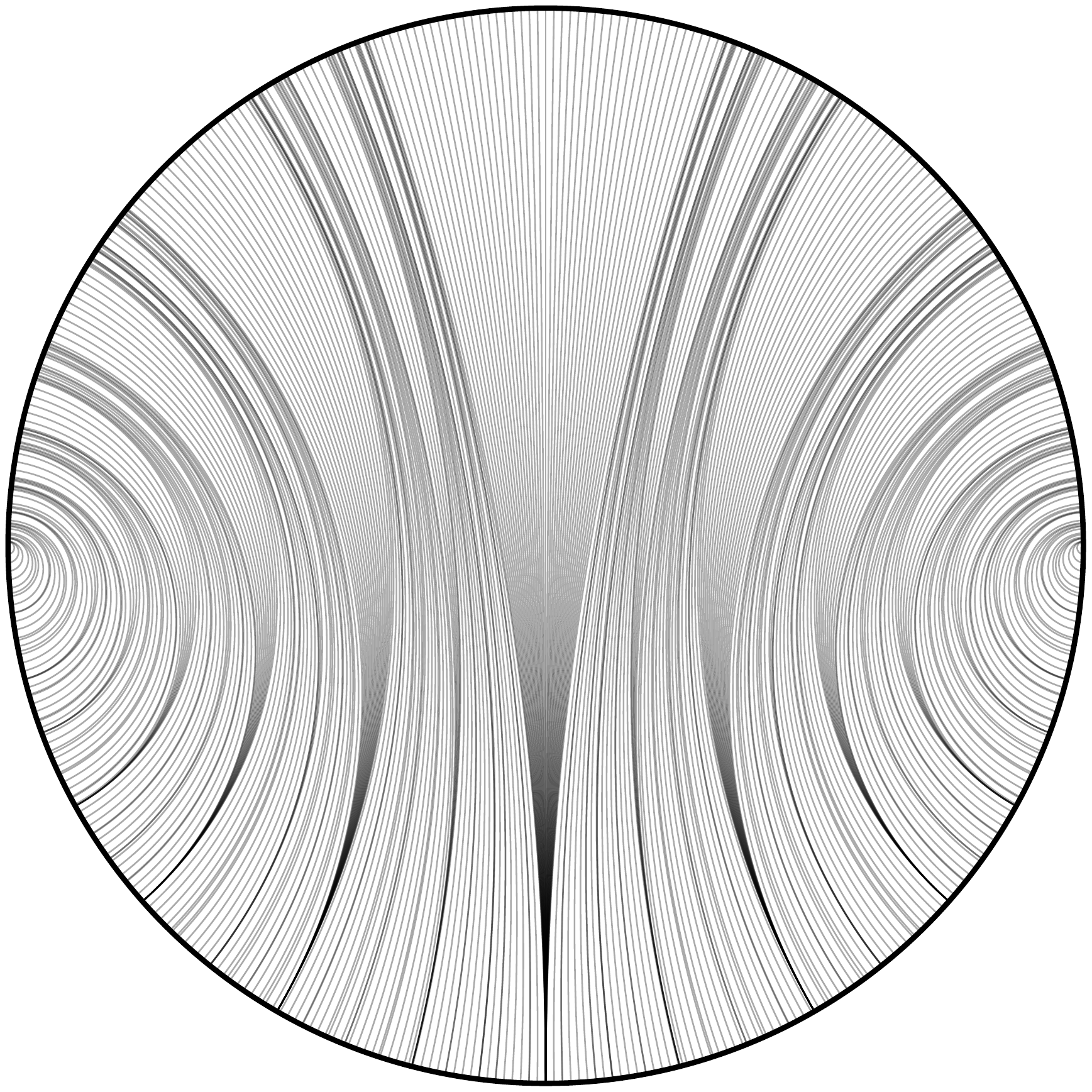}
        \caption{Geodesic foliations of $\mathbb{H}^2$ where the set of discontinuities of $h$ forms a Cantor set.}
    \label{fig:cantor_set_example}
    \end{figure}

    Define the symmetric 2-tensor $g_f$ point-wise on $M$ at points $p \in S_M$ by
    \begin{equation*}
        g_f = -2 f(x) v (dx \; du + du \; dx) + 2 f(x) u (dx \; dv + dv \; dx) + f(x)^2 (u^2 + v^2) dx^2
    \end{equation*}
    and by $g_f = 0$ for $p \not\in S_M$.
    Using that $e_1 = \pder{}{u}$, $e_2 = \pder{}{x} (\cosh u - h(x) \sinh u)^{-1}$, and $e_3 = \pder{}{v}$, we get that
    \begin{align}
    \label{eqn:g_f}
    g_f(X_1,X_2) &= -2 f(x) (\cosh u - h(x) \sinh u)^{-1} v \paren{\chevron{X_1, e_1} \chevron{X_2, e_2}
                + \chevron{X_1, e_2} \chevron{X_2, e_1}} \\
                &\quad + 2 f(x)(\cosh u - h(x) \sinh u)^{-1}  u \paren{\chevron{X_1, e_3} \chevron{X_2, e_2} + \chevron{X_1, e_2} \chevron{X_2, e_3}} \\
                &\quad + f(x)^2(\cosh u - h(x) \sinh u)^{-2}  (u^2 + v^2) \chevron{X_1, e_2} \chevron{X_2, e_2}
    \end{align}
    where $\chevron{\cdot, \cdot}$ is the inner product with respect to $g_{\mathbb{H}^2 \times \R}$.

    Fix any smooth vector fields $X_1, X_2$, and define $F := g_f(X_1, X_2)$, a function on $M$.
    By the above expression for $F$, we can observe that it has the following properties on $S_M$.
    \begin{enumerate}[(a)]
    \item $F$ is a rational function of functions of the following forms: $u$, $v$, $\cosh u$, $\sinh u$, $f^{(i)}(x)$, $h^{(i)}(x)$ (for $i = 0,1,2, \ldots$), or $\chevron{X, e_j}$ (for $j = 1,2,3$) where $X$ is a smooth vector field,
    \item the denominator of this rational function is bounded away from $0$ on any set where $\abs{u}$ is bounded,
    \item each term in the numerator of this rational function has a positive power of some $f^{(i)}(x)$.
    \end{enumerate}

    We can see that $F$ satisfies part $(b)$ since, by Proposition~\ref{prop:foliation_H_lipschitz}, $\abs{h(x)} \leq 1$ and hence 
    \[ \abs{\cosh u - h(x) \sinh u} \geq e^{\abs{u}}. \]
    Furthermore, the derivatives $e_{j_1} \cdots e_{j_k}(F)$ on $S_M$, also satisfies these three properties.
    This follows from directly computing the derivatives of functions of this form by using equations \eqref{eqn:start_cov_derivs}-\eqref{eqn:end_cov_derivs} and using that 
    \begin{align*}
        a(x,u,v) &= 0 \\ 
        \beta(x,u,v) &= (h(x) \cosh u - h(x) \sinh u)/(\cosh u - h(x) \sinh u).
    \end{align*}
    Hence all derivatives $Y_1 \cdots Y_k(F)$ for any smooth vector fields $Y_1, \ldots, Y_k$ on $S_M$ satisfies (a)-(c).

    We next claim that for any function $G$ that satisfies these three properties (a)-(c), $G$ extends continuously to all of $M$ with $G = 0$ on $M \setminus S_M$.
    Take a sequence of points $(x_k, u_k, v_k)$ in $S_M$ that converge to a point in $(x_*,u_*,v_*)$ in $M \setminus S_M$.
    By property (c) and our assumption on $f$, every term of the numerator must go to zero since all factors other than $h^{(i)}(x)$ are bounded as $k \rightarrow \infty$.
    By property (b), the denominator stays bounded away from 0 as $k \rightarrow \infty$.
    Hence $G(x_k, u_k, v_k) \rightarrow 0$ as $k \rightarrow \infty$.

    Since $F$ and all partial derivatives of $F$ on $S_M$ satisfy (a)-(c), $F$ extends smoothly to all of $M$ with $F = 0$ and $Y_1 \ldots Y_k(F) = 0$ on $M \setminus S_M$.
    Hence $g_f(X_1, X_2)$ is a smooth function for any fixed smooth vector fields $X_1, X_2$.
    Since $g_f(X_1, X_2)$ is bilinear in $X_1, X_2$, $g_f$ is a smooth tensor on $M$.

    Therefore $g = g_{\mathbb{H}^2 \times \R} + g_f$ is smooth and is of the form \eqref{eqn:g_curv_homog} on $\tilde S$.
    On $M \setminus \tilde S$, $g = g_{\mathbb{H}^2 \times \R}$ and hence $M$ has Ricci eigenvalues (-1,-1,0) everywhere.

\end{proof}

\begin{remark}
Since $h$ is bounded ($H$ is Lipschitz), we only need to check for the condition when all $\ell_i \geq 1$.
\end{remark}
\begin{remark}
If instead of a smooth metric, we wanted $g$ to be $C^K$, then the condition on $f$ and $h$ in equation \eqref{eqn:f_h_property} is needed only when $k + \sum_{i=1}^m \ell_i \leq K$.
In particular, for a $C^2$ metric, we need that $f, f', f'', fh', f(h')^2, fh'',$ and $f'h'$ go to zero at $x \not\in S$.
\end{remark}

\begin{remark}
By Proposition~\ref{prop:plane_bundle}, for any complete, simply connected $M$ with Ricci eigenvalues $(-1,-1,0)$ that is locally irreducible everywhere, there exists a $\gamma: \R \rightarrow M$ orthogonal to $\mathcal{F}$ and $f(x) := a(\gamma(x))$.
Let $H$ be the turning angle of $\gamma$.
This gives a candidate for a converse to Theorem~\ref{thm:irreducible_examples}.
However, it is not clear that such an $f$ and $h$ must satisfy the assumptions in equation~\eqref{eqn:f_h_property}.
\end{remark}

\section{Manifolds with Locally Reducible Points}
\label{sec:locally_reducible}

We now describe the structure of complete, simply connected manifolds $M$ which have Ricci eigenvalues $(-1, -1, 0)$ that may not be locally irreducible everywhere.
\begin{prop}
\label{prop:decomposition$V_i$}
Suppose that a complete, simply connected manifold $M$ has Ricci eigenvalues $(-1, -1, 0)$.
Then $M$ is decomposed as a union of disjoint regions $\{U_i\}$ such that each $U_i$ is either an open connected component of $M_{split}$ or a closed connected component of $M_{irred}$.
These satsify:
\begin{enumerate}[(A)]
    \item in the first case, we call $U_i$ a \emph{split region}, and $U_i$ is isometric to $\Sigma \times \R$ for $\Sigma \subset \mathbb{H}^2$ a connected subset of the hyperbolic plane whose boundary components are complete geodesics,
    \item in the second case, we call $U_i$ a \emph{non-split region}. In $U_i$, every point is locally irreducible and $C \not= 0$ on a dense, open subset. Furthermore, $U_i$ admits a Lipschitz foliation by the leafs of $\mathcal{F}$ and a  path $\gamma_i$ orthogonal to $\mathcal{F}$ which intersects every leaf exactly once.
\end{enumerate}
\end{prop}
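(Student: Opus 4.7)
The plan is to treat the decomposition as essentially definitional and then verify (A) and (B) by assembling results already established in the paper. Since the zero Ricci eigenvalue is simple (the other two being $-1$), $T$ is smooth globally on $M$ and the splitting tensor $C=-\nabla T$ is smooth; hence $M_{split}$ is open (points with a neighborhood on which $C\equiv 0$) and $M_{irred}=M\setminus M_{split}$ is closed with $M_{irred}=\overline{M_C}$. I take $\{U_i\}$ to be the union of the connected components of $M_{split}$ with the connected components of $M_{irred}$.

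For part (B), all three claims are essentially formal. A point $p\in U_i\subset M_{irred}$ is locally irreducible by the very definition of $M_{irred}$; the set $M_C$ is open and dense in $M_{irred}$, so $U_i\cap M_C$ is open and dense in $U_i$; and the Lipschitz planar foliation $\mathcal{F}$ together with the orthogonal $C^{1,1}$ path $\gamma_i$ are exactly what Lemma~\ref{lemma:cont_foliation} and Proposition~\ref{prop:plane_bundle} provide.

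For part (A), I first note that eigenvalues $(-1,-1,0)$ force sectional curvature $\leq 0$, so $M$ is Hadamard. Fix $p_0\in U_i$. Because $C\equiv 0$ on $U_i$, the field $T$ is parallel on $U_i$ and the distribution $T^\perp$ is integrable with totally geodesic leaves of curvature $-1$. Let $\Sigma_0$ be the connected component in $U_i$ of the $T^\perp$-leaf through $p_0$ and define
\[
    \Phi:\Sigma_0\times\R\longrightarrow U_i,\qquad \Phi(q,t)=\exp_q(tT_q).
\]
Parallelism of $T$ makes $\Phi$ a local isometry onto its image with respect to the product metric; the Hadamard property of $M$ forces $\Phi$ and the map $\Sigma_0\hookrightarrow\mathbb{H}^2$ built from $\exp_{p_0}$ restricted to $T_{p_0}\Sigma_0$ to be injective; and any point of $U_i$ can be reached from $p_0$ by concatenating a $T^\perp$-arc (staying in $U_i$ by definition of $\Sigma_0$) with a nullity geodesic arc (staying in $U_i$ by the Riccati equation and Lemma~\ref{lemma:c_stays_nilpotent}), giving surjectivity. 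This yields $U_i\cong \Sigma_0\times\R$ with $\Sigma_0\subset\mathbb{H}^2$.

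The main obstacle is two-fold: confirming that $\Sigma_0$ is simply connected so that the developing map $\Sigma_0\to\mathbb{H}^2$ is a genuine embedding, and characterizing the topological boundary of $\Sigma_0$ in $\mathbb{H}^2$. I expect simple connectivity to follow from convexity of $U_i$ inside the Hadamard manifold $M$ by an argument parallel to the convexity argument inside the proof of Proposition~\ref{prop:plane_bundle}, applied with $U_i$ in place of the non-split region. For the boundary, any connected component of $\partial\Sigma_0$ corresponds via $\Phi$ to a boundary component of $U_i$ contained in a leaf $L$ of the extended foliation $\mathcal{F}$, which is a complete totally geodesic flat plane everywhere tangent to $T$; in the product coordinates $L$ meets $\overline{\Sigma_0\times\R}$ in a set of the form $\sigma\times\R$, which forces $\sigma$ to be a geodesic in $\mathbb{H}^2$, complete because $L\cong\R^2$ is complete.
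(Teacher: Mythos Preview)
Your handling of (B) matches the paper's. For (A), however, your direct construction has two real gaps, and both trace back to the simple connectivity of $U_i$, which you postpone. First, the surjectivity of $\Phi$ is asserted, not proved: saying that every $p\in U_i$ is reached by a $T^\perp$-arc followed by a nullity geodesic is exactly the claim that the nullity geodesic through $p$ meets $\Sigma_0$, which \emph{is} the de Rham splitting. The usual route makes $\Phi(\Sigma_0\times\R)$ open and closed in $U_i$, but closedness needs the $T^\perp$-leaves to be closed in $U_i$, equivalently the parallel $1$-form $g(T,\cdot)$ to be exact on $U_i$---i.e., $\pi_1(U_i)=0$. Second, ``the map $\Sigma_0\hookrightarrow\mathbb{H}^2$ built from $\exp_{p_0}$ restricted to $T_{p_0}\Sigma_0$'' is not well-posed: $\exp_{p_0}$ lands in $M$, not in a model $\mathbb{H}^2$, and its restriction is not an isometry into one. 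What you need is the developing map of the $K=-1$ surface $\Sigma_0$, and its injectivity again hinges on $\Sigma_0$ being simply connected (and then still requires an argument, e.g.\ via convexity, since a local isometry from a simply connected incomplete surface into $\mathbb{H}^2$ need not be injective).

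The paper avoids this circularity. It quotes the de Rham--type splitting from \cite{graph_manifolds} (already cited in Section~\ref{sec:preliminaries}) to get that the universal cover of $U_i$ is $\Sigma\times\R$; proves $U_i$ simply connected by a one-line covering argument (a nontrivial cover of $U_i$, glued to copies of $M\setminus U_i$ along the boundary planes, would cover $M$ nontrivially); and embeds $\Sigma$ in $\mathbb{H}^2$ by doubling $\Sigma$ across its geodesic boundary to a complete $K=-1$ surface, whose universal cover is $\mathbb{H}^2$, and lifting the inclusion of the simply connected $\Sigma$. Your convexity idea for $U_i$ can also be made to work and would then repair both gaps at once, but note that the convexity proof inside Proposition~\ref{prop:plane_bundle} used the foliation $\mathcal{F}$ filling the non-split region; a split region has no such internal foliation, so you must rerun the argument using only its boundary leaves, which is a genuine (if modest) additional step rather than a direct analogy.
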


\begin{proof}
    For each connected component of $M_{split}$ and each connected component of $M_{irred}$, we have a set $U_i$.
    Since $M \setminus M_{irred}$ is $M_{split}$, $M$ is the union of these disjoint sets.
    If $U$ is a non-split region, then its structure is given by Proposition~\ref{prop:plane_bundle}.

    It remains to be shown that a split region $U$ is isometric to $\Sigma \times \R$ for some simply connected $\Sigma \subset \mathbb{H}^2$.
    By the de Rham-type splitting result of \cite{graph_manifolds, twisted_products}, we know that $U$ is isometrically the product of $\Sigma \times \R$ for some surface $\Sigma$ with Gaussian curvature $-1$.
    Each boundary component of $U$ is also a boundary component of a non-split region.
    Since non-split regions have complete, flat, totally geodesic boundary components, so too must $U$.
    Since $M$ is simply connected, we have that $U$ is simply connected, since otherwise there would be a non-trivial covering of $M$ (obtained by gluing copies of $M \setminus U$ to the non-trivial cover of $U$).
    Hence $\Sigma$ is simply connected.
    To see that $\Sigma \subset \mathbb{H}^2$, we can consider its double $\Sigma \cup \Sigma$ glued along the geodesic boundary components.
    This is a complete surface with $K = -1$ and hence its universal cover is $\mathbb{H}^2$.
    Since $\Sigma$ is simply connected, its inclusion into the double then lifts to an inclusion in $\mathbb{H}^2$, as desired.
\end{proof}

\begin{figure}
\centering
\includegraphics[scale=0.8]{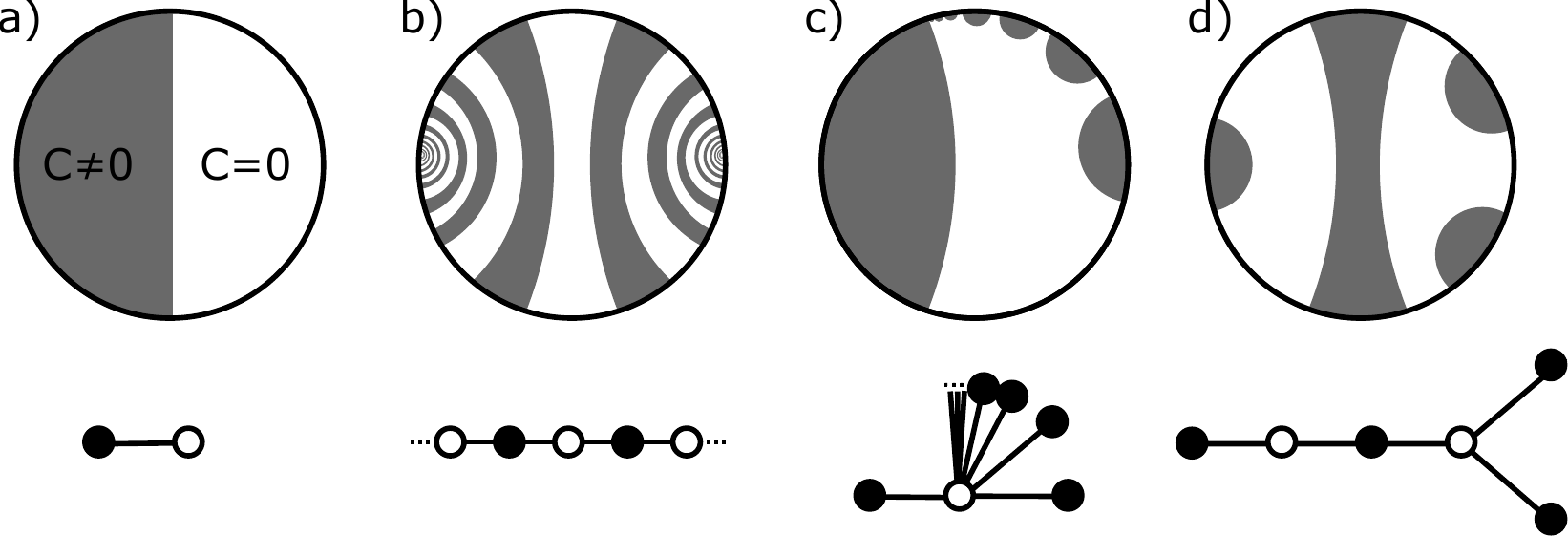}
\caption{Schematic examples of manifolds with Ricci eigenvalues $(-1, -1, 0)$ built off of trees.}
\label{fig:curv_homog_examples}
\end{figure}

\begin{example}
Figure~\ref{fig:curv_homog_examples} shows four possibilities for $M$, modelled after trees.
Each $M$ is drawn schematically in the Poincar\'e disk model of $\mathbb{H}^2$ where the split regions are white and the non-split regions are shaded.
Each non-split region may have any number of boundary components, including infinitely many.
Note that each split region has at most two (and possibly only one) boundary component.
We can construct these examples by taking non-split regions of the form \eqref{eqn:g_curv_homog} with $f(x) = 0$ outside of some interval.
Then these metrics are split outside of a strip and hence can be glued along their split regions.
\end{example}

\section{Topology}
\label{sec:topology}

In this section, we consider $M$ with Ricci eigenvalues $(-1, -1, 0)$ that may not be simply connected.
Since $\Sec \leq 0$, the universal cover $\widetilde M$ is always diffeomorphic to $\R^3$ and the topology of $M$ is determined by the fundamental group alone.

Our main result in this section will be Theorem~\ref{thm:pi1}, which states that any manifold with Ricci eigenvalues $(-1, -1, 0)$ and finitely generated fundamental group, has fundamental group that is a free group (unless $\widetilde M$ is split, i.e. isometric to $\mathbb{H}^2 \times \R$).

\begin{lemma}
    \label{lemma:def_A}
    Define $A(p,q)$ for $p,q \in \widetilde M$ by
    \[A(p,q) = \int \norm{C(\gamma')} dt = \int \abs{a(\gamma(t)) \chevron{\gamma', e_2}} dt \]
    integrating over the unique geodesic segment $\gamma$ from $p$ to $q$.
    Then $A(p,q)$ depends only upon the leaves $\mathcal{F}_p, \mathcal{F}_q$ and not on the choice of points on those leaves.
    Moreover, $A$ is an isometry invariant.
\end{lemma}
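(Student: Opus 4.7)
The plan is to compute the integrand in local coordinates, reduce $A$ to an integral of $\abs{f(x)}\,dx$ over monotone sojourns in each connected component of $M_C$, and then use the Hadamard structure of $\widetilde M$ to see that this data is determined by $(\mathcal{F}_p, \mathcal{F}_q)$ alone.

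First, I would observe that on $M_{split}$ we have $a \equiv 0$, so the integrand of $A$ vanishes there and only the portion of $\gamma$ in $M_C$ contributes. On a connected component $U$ of $M_C$, Proposition~\ref{prop:g_curv_homog} provides coordinates $(x,u,v)$ with metric \eqref{eqn:g_curv_homog}. Since $C$ is nilpotent with $C(e_1) = 0$, $C(e_2) = a e_1$ and $T \in \ker C$, we have $\norm{C(\gamma')} = \abs{a \chevron{\gamma', e_2}}$. Decomposing $\gamma'$ in the orthonormal frame $\{e_1, e_2, T\}$ and using Lemma~\ref{lemma:g_complete}(c,d), a direct computation yields
\begin{equation*}
\chevron{\gamma', e_2} = (\cosh u - h(x)\sinh u)\, \dot x, \qquad a = \frac{f(x)}{\cosh u - h(x)\sinh u},
\end{equation*}
so that $\norm{C(\gamma')} = \abs{f(x(t))\, \dot x(t)}$ along the sojourn of $\gamma$ in $U$.

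Next, I would show that $x \circ \gamma$ is monotone on each maximal time interval spent in $U$. Since $\sec \leq 0$ and $\widetilde M$ is simply connected, $\widetilde M$ is Hadamard. Any complete, totally geodesic $2$-plane in a Hadamard manifold meets a geodesic $\gamma$ in at most one point: two intersection points would, by uniqueness of geodesics in a Hadamard manifold, force the sub-arc of $\gamma$ between them to coincide with a geodesic lying in the plane, contradicting that $\gamma$ exits the plane. Applied to the leaves of $\mathcal{F}$, this makes $x \circ \gamma$ injective and hence monotone on each sojourn, so the contribution from that sojourn equals $\int \abs{f(x)}\, dx$ taken between the entry and exit $x$-values $x_1, x_2$ (or between $x(\mathcal{F}_p)$ and $x(\mathcal{F}_q)$ when $p$ or $q$ lies in $\overline U$). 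Furthermore, each leaf $L$ separates the Hadamard manifold $\widetilde M \cong \R^3$ into two half-spaces; any other leaf is connected and disjoint from $L$, so it lies entirely on one side, and $\gamma$ crosses $L$ if and only if $\mathcal{F}_p$ and $\mathcal{F}_q$ lie on opposite sides of $L$. Consequently the full set of leaves crossed by $\gamma$, and in particular the entry and exit leaves for each component $U$ traversed, is determined by $(\mathcal{F}_p, \mathcal{F}_q)$, and summing the contributions yields the first claim.

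For isometry invariance, any isometry $\phi$ preserves $\mathrm{Ric}$ and hence $\ker \mathrm{Ric}$, so $\phi_* T = \pm T$; differentiating gives $\phi_* C(X) = \pm C(\phi_* X)$, so $\norm{C}$ is preserved as a pointwise norm on $TM$. Since $\phi$ also sends the geodesic from $p$ to $q$ to the geodesic from $\phi(p)$ to $\phi(q)$, the integrals defining $A(p,q)$ and $A(\phi(p), \phi(q))$ agree. The main obstacle is the monotonicity of $x \circ \gamma$: without it one cannot summarize the contribution from each component $U$ purely in terms of the two leaves at which $\gamma$ meets $\partial U$. The Hadamard--totally-geodesic argument above handles this cleanly, after which the claim reduces to bookkeeping about the separation structure of $\widetilde M$ by the leaves of $\mathcal{F}$.
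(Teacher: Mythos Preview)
Your proof is correct and takes essentially the same approach as the paper: compute the integrand as $|f(x)\,\dot x|$ in the local coordinates on each component of $M_C$, use the Hadamard structure to see that $x\circ\gamma$ is monotone on each sojourn, and conclude via the separation of $\widetilde M$ by the totally geodesic leaves that only $\mathcal{F}_p,\mathcal{F}_q$ determine the traversed components and their entry/exit $x$-values. The paper differs only cosmetically, reparametrizing $x$ so that the integrand becomes $|\dot x|$ and leaving the monotonicity step implicit; your treatment of that point is in fact a bit more careful.
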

\begin{proof}
    First observe that the value inside the integral is well-defined since $e_2$ is defined except where $a = 0$, and the integral exists because $\abs{\chevron{\gamma', e_2}}$ is bounded by $\norm{\gamma'}$.
    Moreover, the definition is invariant of the choice of parametrization of $\gamma$.
    Since $\abs{a}$ is an isometry invariant, $A(p,g) = A(gp, gq)$ for any isometry $g$.

    Take points $p_2 \in \mathcal{F}_p$ and $q_2 \in \mathcal{F}_q$ and let $\gamma_2$ be the geodesic between the two points.
    Observe that every leaf of $\mathcal{F}$ that intersects $\gamma$ must also intersect $\gamma_2$, since each such leaf must have $\mathcal{F}_{p}$ and $\mathcal{F}_q$ on opposite sides of it.
    So the intervals where $a \not= 0$ on $\gamma$ are in bijection $\gamma$ with those on $\gamma_2$.
    To compute that integral in $A(p,q)$ we can restrict to the sum of the integrals over each interval where $\abs{a(\gamma(t))}$ is positive, and these regions are in bijection between $\gamma$ and $\gamma_2$.
    
    To show that $A(p,q) = A(p_2, q_2)$ we now need to show that the integral over these corresponding intervals of $\gamma$, $\gamma_2$ are equal.
    On a connected component with $a \not= 0$, there are coordinates $(x,u,v) \in (x_0, x_1) \times \R^2$ such that
    \begin{equation}
        \label{eqn:coordinates_a_nonzero}
        g = a^{-2}\; dx^2 + (du - v\; dx)^2 + (dv + u \; dx)^2
    \end{equation}
    where $a = C(x) (\cosh u - h(x) \sinh u)$ for some $C(x) \not= 0$, $\abs{h(x)} \leq 1$.
    In these coordinates, $T = \pder{}{v}, e_1 = \pder{}{u}$, the vector $e_2 = \abs{a} \paren{\pder{}{x} + v \pder{}{u} - u \pder{}{v}}$, and the leaves of $\mathcal{F}$ are the sets where $x$ is constant.

    We claim that for paths in this $a \not= 0$ region, $\int \abs{a(\gamma(t)) \chevron{\gamma', e_2}} dt$ is independent of the path taken between its starting and ending leaves, so long as it is increasing in $x$.
    Assume that the domain of $\gamma$, restricted to one such region, is $[0,1]$, then
    \begin{align*}
    \int_0^1 \abs{a \chevron{\gamma'(t), e_2}} dt
        &= \int_0^1 \abs{a^2 \chevron{\gamma'(t), \pder{}{x} + v \pder{}{u} - u \pder{}{v}}} dt \\
        &= \int_0^1 \abs{a^2 a^{-2} dx(\gamma'(t))} dt \\
        &= x(\gamma(1)) - x(\gamma(0))
    \end{align*}
    where the second equality follows by applying~\eqref{eqn:coordinates_a_nonzero}.
    Since the $x$ coordinate is constant on the leaves of $\mathcal{F}$, this integral depends only upon the leaves of the end points and hence $A(p,q) = A(p_2, q_2)$.
\end{proof}

\begin{remark}
    \label{remark:A}
    We think of $A$ as measuring a distance between any two leaves of $\mathcal{F}$.
    This distance measures the amount of rotation of the $T$ vector field between the two leaves.
    However, $A$ is only a pseudometric on $\mathcal{F}$.

    If $a$ is nonzero everywhere, then $A$ gives essentially the $x$-coordinate in the coordinates of the form in \eqref{eqn:coordinates_a_nonzero}.
    This allows $A$ to act as an extension of the $x$ coordinate to any non-split region, even those with $a = 0$ at some leaves.

    Moreover, the path integral in $A$ does not need to be a geodesic and a similar strategy shows that any path between $p,q$ will give the same value for $\int \abs{a \chevron{\gamma', e_2}} dt$, so long as there is no back-tracking, i.e. it never intersects the same leaf twice.
\end{remark}

We now work towards the proof of Theorem~\ref{thm:pi1} with lemmas that restrict the isometries that stabilize leaves of $\mathcal{F}$, non-split regions, and split regions of $M$, as well as a lemma for the case where $M$ is locally irreducible everywhere.

Let $\widetilde M$ be the universal cover of $M$.
Recall that since $\sec \leq 0$, if $G$ acts on $\widetilde M$ fixed point freely, then $G$ cannot have torsion.
If $g \in G$ has an invariant plane, i.e. $g(L) \subset L$ for some leaf $L$ in $\mathcal{F}$, then we will show that $g$ is trivial.
To do so, we may assume that $g$ acts by translations on the leaf.
Certainly $g$ acts by isometries on $L$ and has no fixed point and hence is either a translation or a glide reflection.
In the latter case, $g^2$ is a translation, so we may pass to $g^2$ instead of $g$, if necessary, since $g^2$ is trivial then so is $g$.
Similarly, if $g$ fixes a finite number of leaves, we will assume it acts by translations on all of them at  once.
We make use of these assumptions in the proof of the following lemmas.

Suppose that $G$ is any group of isometries acting fixed point freely on $\widetilde M$.
\begin{lemma}
    \label{lemma:two_fixed_planes}
    If $G$ fixes two distinct leaves $L_0, L_1 \in \mathcal{F}$, i.e. $G(L_i) \subset L_i$,
    then either $G$ is trivial or $L_0, L_1$ are boundary leaves of a split region.
\end{lemma}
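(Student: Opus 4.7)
My plan is to derive a contradiction from the assumption that some non-trivial $g \in G$ fixes both $L_0$ and $L_1$, thereby showing $G$ must be trivial (the stronger of the two conclusions allowed by the lemma). By the discussion immediately preceding the lemma, I may replace $g$ by $g^2$ if necessary and assume $g$ acts as a translation on each of $L_0$, $L_1$. Let $V_0$ be the connected component of $M_{irred}$ containing $L_0$; this is a non-split region by Proposition~\ref{prop:decomposition$V_i$}, and $g(V_0) = V_0$ because $g$ preserves the decomposition into split and non-split components and $V_0$ is the unique such component containing $L_0$.

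Next I would argue that, after possibly replacing $g$ by $g^2$ a second time, $g$ preserves the transverse orientation at $L_0$. Indeed, if $g$ reversed transverse orientation, it would swap the two sides of $L_0$ inside $V_0$ (or force $L_0$ to be a boundary leaf of $V_0$, incompatible with $g(V_0) = V_0$), so $g$ could fix no other leaf of $\mathcal{F}$ in $V_0$; the argument below applied to $g^2$ would then yield $g^2 = \mathrm{id}$, and since $G$ is torsion-free (as noted just before the lemma) this forces $g = \mathrm{id}$. With transverse orientation preserved, the boundary leaves of $V_0$ being complete totally geodesic planes makes $V_0$ convex in the Hadamard manifold $\widetilde M$, so Lemma~\ref{lemma:def_A} applies along geodesics staying in $V_0$. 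Since $V_0$ is non-split, $|f|$ is positive on a dense subset of the leaf parameter, so the pseudometric $A(L_0, \cdot)$ is strictly monotonic on each side of $L_0$ within $V_0$. Isometry invariance of $A$ then forces $gL = L$ for every leaf $L$ in $V_0$.

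The main step is to apply the coordinates of Proposition~\ref{prop:g_curv_homog} on an $M_C$-component $U \subset V_0$ abutting $L_0$. In these coordinates $(x,u,v)$ with metric~\eqref{eqn:g_curv_homog}, $g$ preserves each leaf $\{x = c\}$ and acts there by Euclidean translation; using that $g$ also preserves the family of $C^{1,1}$ orthogonal curves of Proposition~\ref{prop:plane_bundle} pins down $g$ in the form $(x,u,v) \mapsto (x, u + A(x), v + B(x))$ where $A' = fB$ and $B' = -fA$. Matching the $dx^2$ coefficient of $g^*g$ against $g$ produces the identity
\[ (\cosh u - h(x)\sinh u)^2 = (\cosh(u + A(x)) - h(x)\sinh(u + A(x)))^2 \]
for all $(x,u)$; using $|h(x)| \le 1$ from Lemma~\ref{lemma:c_stays_nilpotent}, a direct hyperbolic trigonometric computation forces $A(x) \equiv 0$. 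Then $B$ is a constant $b_0$, and the $du\,dx$ cross term forces $f(x)\,b_0 \equiv 0$; since $f$ is nonzero on a dense subset of the chart, $b_0 = 0$. Therefore $g|_U = \mathrm{id}$, so by the rigidity of isometries on the connected manifold $\widetilde M$, $g = \mathrm{id}$, contradicting $g \neq \mathrm{id}$.

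The main obstacles are the transverse-orientation bookkeeping (in particular the reduction of the orientation-reversing case to $g^2$, together with the fact that this reduction never introduces a second element whose square we need to take) and the hyperbolic identity forcing $A \equiv 0$; the latter splits into the cases $|h(x)| < 1$ and $|h(x)| = 1$ but delivers $A = 0$ uniformly. Together these steps give the stronger conclusion that $G$ is trivial, which in particular implies the disjunction stated in the lemma.
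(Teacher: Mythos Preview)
Your argument is correct, but it proceeds quite differently from the paper's. The paper works geometrically via the flat strip theorem: the translation axes of $g$ on $L_0$ and on $L_1$ are parallel geodesics in $\widetilde M$ (by convexity of $d(\cdot,L_1)$ and $g$-invariance), hence bound a flat totally geodesic strip; such a strip must be ruled by nullity geodesics, forcing $C\equiv 0$ between $L_0$ and $L_1$, so the region they bound is split. Thus the paper establishes the \emph{second} disjunct whenever $g\neq e$. You instead prove the \emph{first} disjunct outright, using only $L_0$: isometry invariance of the pseudometric $A$ of Lemma~\ref{lemma:def_A} (injective on leaves inside a non-split region) pins $g$ to each leaf, and then the explicit computation in the $(x,u,v)$ chart of Proposition~\ref{prop:g_curv_homog}---the identity $(\cosh(u+A)-h\sinh(u+A))^2=(\cosh u-h\sinh u)^2$ together with $A'=fB$, $B'=-fA$ on a chart where $f\neq 0$---forces $g|_U=\mathrm{id}$, hence $g=\mathrm{id}$ by rigidity of isometries. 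Your route is more computational and tied to the special form~\eqref{eqn:g_curv_homog} of the metric, whereas the paper's route uses only standard nonpositive-curvature tools; on the other hand, yours yields a genuinely sharper statement (the second disjunct is vacuous, and the hypothesis on $L_1$ is never used), in effect absorbing part of Lemma~\ref{lemma:no_fixed_nonsplit_regions} into this step.
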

\begin{proof}
    Assume there is some $g \not= e$ in $G$.
    First note that since the restriction $g|_{L_i}$ is an isometry of the flat leaf $L_i \simeq \R^2$, we may assume that $g$ acts by translation on each $L_i$, passing to $g^2$ if not.
    Pick a point $p_0 \in L_0$ and let $\gamma_0$ be the geodesic along which $g$ translates $L_0$.
    Recall $L_1$ is a totally geodesic plane, and hence is a convex subset of $\widetilde M$.
    Since $\sec \leq 0$, we have that $d(\cdot, L_1)$ is a convex function along any geodesic and in particular along $\gamma_0$.

    Moreover, $d(g^k(p_0), L_1) = d(p_0, L_1)$ and hence $d(\cdot, L_1)$ is constant along $\gamma_0$.
    Let $p_1$ be the unique point on $L_1$ closest to $p_0$ and $\gamma_1$ the geodesic in $L_1$ along which $g$ translates the point $p_1$.
    Then $g^k(p_1)$ is the unique point on $L_1$ closest to $g^k(p_0)$ and then $\gamma_0$ and $\gamma_1$ are parallel in the sense of having bounded (in fact constant) distance.
    Since $\sec \leq 0$, the union of all geodesics parallel to any geodesic is a convex subset isometric to $N \times \R$ for some closed convex subset $N$ of $M$,  see Lemma 2.4 in \cite{ballman}.
    So the geodesics $\gamma_0$ and $\gamma_1$ bound a flat strip, i.e. a totally geodesic submanifold isometric to $[0,\ell] \times \R$.

    Since this strip is flat, it must contain the nullity geodesics through each point of the strip.
    Since the nullity geodesics are complete, they are parallel in the strip, so $C(X) = 0$ for any vector $X$ in the strip.
    Note that the strip must be transverse to $e_1$ since if $e_1$ was in its tangent plane at one point, the strip would be contained in a single leaf of $\mathcal{F}$.
    Hence $C = 0$ on the strip.
    Since this holds for any $p_0 \in L_0$, we then have $C = 0$ on any point on a geodesic from $L_0$ to $L_1$, and so $L_0$ and $L_1$ must bound a split region.
\end{proof}

\begin{lemma}
    \label{lemma:no_fixed_nonsplit_regions}
    Suppose $V$ is any subset of $\widetilde M_{irred}$, with $V$ a strict subset of $\widetilde M$.
    If $G(V) \subset V$, then $G$ is trivial.
\end{lemma}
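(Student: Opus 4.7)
The plan is to assume for contradiction that there is a non-trivial $g \in G$ with $g(V) \subset V$. Pick a point $p \in V$ and let $U$ be the connected component of $\widetilde M_{irred}$ containing $p$. Since $g(p) \in V \cap g(U)$ and distinct components of $\widetilde M_{irred}$ are disjoint, the structurally interesting case is when $V$ lies in a single component $U$; then $g(U) = U$ and $g$ restricts to an isometry of $U$. (If $V$ meets several components, one first shows that the stabilizer of $U$ in $G$ must be trivial by the argument below and then deduces $G$ trivial from the forced compatibility of $g$ with the tree-like decomposition in Proposition~\ref{prop:decomposition$V_i$}.)

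By Proposition~\ref{prop:plane_bundle}, the non-split region $U$ admits a $C^{1,1}$ curve $\gamma: I \to U$ orthogonal to $\mathcal{F}$ that intersects each leaf of $\mathcal{F}|_U$ exactly once, so leaves of $U$ are parametrized by an interval $I$. Then $g$ induces a homeomorphism $\sigma: I \to I$ by permuting leaves, and I would use the isometry invariance of the pseudodistance $A$ from Lemma~\ref{lemma:def_A} to show that $\sigma$ is an isometry of $I$ equipped with the parametrization $\tilde A(t) = \int_{t_0}^t |a(\gamma(\tau))|\,d\tau$. Consequently $\sigma$ corresponds to either the identity, a non-trivial translation, or a reflection of $\tilde A(I) \subset \R$.

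Because $V \subsetneq \widetilde M$ and $V \subset U$, we have $U \neq \widetilde M$, so by Proposition~\ref{prop:g_curv_homog} the interval $I$ has at least one finite endpoint $a_i$ corresponding to a boundary leaf of $U$. At this endpoint $a \to 0$ (and $a$ is smooth with first-order vanishing there), so $\tilde A$ remains bounded on that side; in particular $\sigma$ cannot act as a non-trivial translation of $\tilde A(I)$, because any such translation would shift a finite endpoint to a non-endpoint. Thus $\sigma$ is the identity or a reflection. In the identity case, $g$ fixes every leaf of $U$; pick any two distinct leaves $L_1, L_2 \subset U$: because the geodesic between them stays inside the non-split region $U$, they cannot bound a split region, so Lemma~\ref{lemma:two_fixed_planes} forces $g = e$, contradicting $g$ non-trivial. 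In the reflection case, $\sigma^2 = \mathrm{id}$, so $g^2$ fixes every leaf of $U$; the same application of Lemma~\ref{lemma:two_fixed_planes} yields $g^2 = e$, and since $\widetilde M$ is Hadamard with fixed-point-free $G$-action, $G$ is torsion-free and hence $g = e$, again a contradiction.

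The hard part will be verifying cleanly that $\tilde A(I)$ is bounded on the side of a boundary leaf and that $\sigma$ transports finite endpoints to finite endpoints (i.e., that boundary leaves are $g$-invariant as a set); this uses that $a$ is a smooth isometry invariant with $a = 0$ precisely on the boundary leaves, shown in Section~\ref{sec:preliminaries}. The secondary obstacle is the reduction step when $V$ meets several non-split components: for this I would invoke the bipartite tree structure of the non-split/split decomposition and show that a non-trivial tree isometry either fixes a component (reducing to the case above) or admits an axis passing through components on which the strict-subset hypothesis $V \subsetneq \widetilde M$ becomes incompatible with $g(V) \subset V$.
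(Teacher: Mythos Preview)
Your approach and the paper's are essentially the same: both reduce to Lemma~\ref{lemma:two_fixed_planes} via the isometry-invariance of $A$. The paper splits into two cases (if $U$ has two boundary leaves then $g$ or $g^2$ fixes both and Lemma~\ref{lemma:two_fixed_planes} applies directly to them; if $U$ has one boundary leaf $L_0$, then $A(L_0,\cdot)$ is used to locate a second fixed leaf nearby), whereas you handle both at once by classifying the isometries of the interval $\tilde A(I)$.

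You have, however, inverted which steps need work. The ``hard parts'' you flag are trivial: the endpoint of $I$ corresponding to a boundary leaf is \emph{finite} because $\gamma$ is unit-speed and $I$ is closed (Proposition~\ref{prop:plane_bundle}), so $\tilde A$ is bounded there simply as the integral of a continuous function over a compact interval---no vanishing order of $a$ enters, and ``first-order vanishing'' is neither proved nor generally true; and $\sigma$ preserves endpoints just because $g$, being an isometry preserving $U$, preserves $\partial U$. Conversely, the step you pass over silently is the one that requires an argument: for $\sigma$ to be conjugate to an isometry of $\tilde A(I)$ you need $\tilde A$ strictly monotone, i.e.\ $\int_s^t |a\circ\gamma|>0$ for all $s<t$ in $I$. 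This holds because $M_C$ is open and dense in $M_{irred}$, so $\{a\circ\gamma\neq 0\}$ is open and dense in $I$ and therefore contains an open subinterval of every $(s,t)$. Finally, your ``secondary obstacle'' dissolves in one line, as in the paper: since $g$ maps non-split regions to non-split regions, $G(V)\subset V$ forces $G$ to preserve the non-split region containing $V$, so one may take $V$ to be that region; no tree argument is needed (and in every application in the paper $V$ is a single leaf anyway).
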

\begin{proof}
    Note that $G$ fixing $V$ implies that $G$ also fixes the entire non-split region (i.e., connected component of $M_{irred}$) containing $V$, so we may assume that $V$ is a non-split region rather than a subset of one.

    First, we consider the case where $V$ has two distinct boundary components, $L_0$ and $L_1$ which are leaves of $\mathcal{F}$.
    Pick $g \in G$.
    Either $g$ stabilizes each boundary leaves or $g^2$ does.
    Then Lemma~\ref{lemma:two_fixed_planes} shows that $g$ is trivial.

    Instead, suppose $V$ has exactly one boundary component $L_0$.
    (We allow the possibly that $V$ has no interior, so $V = L_0$.)
    Then $G(L_0) \subset L_0$.
    Since $V$ is a non-split region and so is in $M_{irred}$, we must have a sequence of points with $a \not= 0$ converging to $L_0$.
    In particular, infinitely many of those points lie on one side of $L_0$ and we will consider the leaves of $\mathcal{F}$ near $L_0$ on that side.
    Using $A$ defined in Lemma~\ref{lemma:def_A}, we consider $A(L_0, \mathcal{F}_p)$.
    Since $A(\cdot,\cdot)$ and $L_0$ are invariant under $G$, $A(L_0, \cdot)$ must be as well.

    Take $g \in G$ and $p_0 \in L_0$.
    Then consider the geodesics $\gamma, \mu$ starting at $p_0$ and $gp_0$, respectively, and orthogonal to $L_0$.
    There are infinitely many leaves of $\mathcal{F}$ intersecting $\gamma$, which limit to $p_0$.
    Since these leaves do not intersect $L_0$, they must limit towards being parallel to $L_0$.
    Therefore, there is an $\epsilon > 0$ so that all leaves within $\epsilon$ of $p_0$ along $\gamma$ also intersect $\mu$ near $gp_0$.
    The same is true for $\mu$, so that there is an $\epsilon > 0$ where the leaves along $\gamma$ all intersect $\mu$ and the leaves along $\mu$ all intersect $\gamma$.
    Taking $A(L_0, \cdot)$ along $\gamma$ and $\mu$ we then have that, for $s,t < \epsilon$, if $A(L_0, \mu(t)) = A(L_0, \gamma(s))$ then the leaves through $\mu(t)$ and $\gamma(s)$ must be the same.
    Since $A$ is an isometry invariant, $g$ must map such leaves to themselves.
    So $g$ fixes two leaves, and by \ref{lemma:two_fixed_planes}, $g$ is trivial.
\end{proof}

\begin{lemma}
    \label{lemma:split_region_free}
    Suppose that $U$ is a split region of $\widetilde{M}$ with at least one boundary component.
    If $G(U) \subset U$, then $G$ is a free group.
\end{lemma}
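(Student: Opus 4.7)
The plan is to exploit the product structure $U \cong \Sigma \times \R$ from Proposition~\ref{prop:decomposition$V_i$}, where $\Sigma \subset \mathbb{H}^2$ is simply connected convex with pairwise disjoint complete geodesic boundary components, at least one of which exists by hypothesis. Since the $\mathbb{H}^2$ and $\R$ factors have distinct sectional curvatures, every isometry of $U$ preserves the de Rham splitting, so the action gives an injection $G \hookrightarrow \mathrm{Isom}(\Sigma) \times \mathrm{Isom}(\R)$. My goal is to identify the image of the projection $\pi\colon G \to \mathrm{Isom}(\Sigma) \subset \mathrm{Isom}(\mathbb{H}^2)$ as a discrete torsion-free subgroup acting freely on $\Sigma$, so that $\Sigma/G$ is a $2$-manifold with nonempty boundary realizing $G$ as its fundamental group.

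The first step is to show that the $G$-stabilizer of any boundary leaf $L = \gamma \times \R$ of $U$ is trivial. Such $L$ is contained in a non-split region $V$; since $U$ is nonempty and disjoint from $V$, $V$ is a proper subset of $\widetilde M$, and any $g$ fixing $L$ setwise fixes the connected component $V$ as well, so Lemma~\ref{lemma:no_fixed_nonsplit_regions} applied to $\langle g \rangle$ gives $g = e$. Every element of $\ker \pi$ acts as $(\mathrm{id}, h)$ and therefore fixes every leaf $\gamma \times \R$ setwise, so $\ker \pi$ is trivial.

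The crux is proving discreteness of $\pi(G)$ in $\mathrm{Isom}(\mathbb{H}^2)$. I would first rule out accumulation of boundary geodesics in $\mathbb{H}^2$: suppose distinct $\gamma_n \in \partial \Sigma$ converge to some boundary geodesic $\gamma$. Each $\gamma_n$ is disjoint from $\gamma$ and contained in $\overline{\Sigma} \subset \overline H$, where $H$ is the half-plane bounded by $\gamma$ on the $\Sigma$-side, so $\gamma_n \subset H$. Convexity of $\Sigma$ together with $\Sigma$ having points arbitrarily close to $\gamma$ forces $\Sigma$ into the component $A_n$ of $H \setminus \gamma_n$ whose closure contains $\gamma$. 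However, for any $q \in H$ at positive distance $d_0$ from $\gamma$, once $\gamma_n$ is close enough to $\gamma$ it crosses the perpendicular segment $\overline{pq}$ (where $p$ is the foot of $q$ on $\gamma$) transversely, separating $q$ from $\gamma$, so $q \notin A_n$. Hence no point of $H$ lies in $\bigcap_n A_n$, contradicting $\emptyset \neq \Sigma \subset A_n$ for every $n$. Given this non-accumulation, any $f_n \in \pi(G) \setminus \{\mathrm{id}\}$ with $f_n \to \mathrm{id}$ satisfies $f_n(\gamma) \to \gamma$ for any chosen boundary geodesic $\gamma$; if infinitely many $f_n$ stabilize $\gamma$ they stabilize $\gamma \times \R$ and equal the identity by the previous paragraph, while otherwise $\{f_n(\gamma)\}$ contains a distinct accumulating family---either way a contradiction.

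Finally, $G$ is torsion-free from its fixed-point-free action on $\widetilde M \cong \R^3$, and a discrete torsion-free subgroup of $\mathrm{Isom}(\mathbb{H}^2)$ contains no elliptic element (irrational rotations generate non-discrete cyclic subgroups, rational rotations are torsion). Hence $\pi(G)$ acts freely and properly discontinuously on $\Sigma$, so $\Sigma/G$ is a Hausdorff $2$-manifold whose universal cover is the simply connected $\Sigma$, giving $\pi_1(\Sigma/G) \cong G$. Since $G$ preserves the nonempty $\partial \Sigma$, the quotient has nonempty boundary and deformation retracts onto a graph, so $G$ is free. The main obstacle is the convexity argument ruling out accumulation of boundary geodesics; the naive \emph{thin strip} intuition fails in hyperbolic geometry since the region between two close geodesics can have infinite area, and one must argue pointwise via the perpendicular segment from $q$ to produce the separating crossing $\gamma_n \cap \overline{pq}$.
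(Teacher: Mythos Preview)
Your proof is correct and follows essentially the same approach as the paper: both reduce to the induced action on $\Sigma$, use Lemma~\ref{lemma:no_fixed_nonsplit_regions} to show that stabilizers of boundary geodesics are trivial, invoke local finiteness of the pairwise disjoint complete boundary geodesics in $\mathbb{H}^2$, and conclude that $\Sigma/G$ is a non-compact surface with free fundamental group. The only organizational difference is that you package the middle step as discreteness of $\pi(G)$ in $\mathrm{Isom}(\mathbb{H}^2)$ (and are more careful about justifying the non-accumulation of boundary geodesics), whereas the paper verifies freeness and proper discontinuity on $\Sigma$ directly via a closest-boundary-geodesic pigeonhole argument.
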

\begin{proof}
    Since $C = 0$ on $U$, $U$ is isometric to $\Sigma \times \R$ with $\Sigma$ a subset of $\mathbb{H}^2$ with complete geodesics for its boundary components.
    Suppose for contradiction that there is a non-trivial $g \in G$ such that $g$ fixes a point $p \in  \Sigma$.
    Then let $r = \inf_{\gamma_j} d(p, \gamma_j)$ where $\curly{\gamma_j}$ is the set of boundary components of $\Sigma$.
    There must be at least one boundary component that realizes the infimum and, moreover, only finitely many do, since the boundary components are complete geodesics in $\mathbb{H}^2$.

    Then $g$ must act on the set $\curly{\gamma_j | d(p, \gamma_j) = r}$ of those boundary components.
    Since the set is finite, there is some $k > 0$ such that some $\gamma_j$ is invariant under $g^k$.
    But then the  boundary $\gamma_j \times \R$ of $U$ is invariant under $g^k$.
    By the previous lemma, we know the only such isometries are trivial.
    Then $g$ has order at most $k$, but $G$ cannot have torsion.
    So $G$ must act fixed-point freely on $\Sigma$

    Similarly, we can see that $G$ acts properly discontinuously on $\Sigma$.
    Suppose that $p_0 \in U$ and there is a sequence of distinct points $p_i = g_i(p_0)$, $g_i \in G$ with $g_i \not= g_j$ with $p_i \rightarrow p_* \in U$.
    Then let $\gamma_*$ be any geodesic of minimal distance to $p_*$ and let $D = d(\gamma_*, p_*)$.
    For $\epsilon > 0$, choose $N$ so that $d(p_i, p_*) < \epsilon$ for $i > N$.
    Each $g_i$ has $g_i^{-1}(\gamma_*)$ a boundary geodesic and in particular since $d(p_i, p_*) < \epsilon$, $d(g_i^{-1}(\gamma_*), p_0) < D + \epsilon$.
    The set of boundary geodesics that are distance at most $D + \epsilon$ from $p_0$ is finite.
    Hence there exists $j > k > N$ such that there is a geodesic $\gamma_0$ of distance at most $D + \epsilon$ from $p_0$ so that $g_j(\gamma_0)$ and $g_k(\gamma_0)$ are both $\gamma_*$.
    Then $g_j^{-1} g_k$ must fix $\gamma_0$.
    This is a contradiction with the previous lemma.
    So $G$ must act properly discontinuously as well as fixed-point freely.

    Now $\Sigma$ is an open surface that is contractible (since it is a convex subset of $\mathbb{H}^2$) and $G$ acts on it fixed-point freely and properly discontinuously.
    So $G$ is the fundamental group of $\Sigma / G$, a non-compact surface.
    Hence $G$ is a free group, by a well-known fact that the fundamental group of any non-compact surface is free.
    See Section 4.2.2 of \cite{stillwell} for reference.
\end{proof}

\begin{lemma}
    \label{lemma:pi1_irreducible}
    Suppose that a complete manifold $M$ has constant Ricci eigenvalues $(-1, -1, 0)$ and is everywhere locally irreducible.
    Then $\pi_1(M)$ is either trivial or $\Z$.
    \end{lemma}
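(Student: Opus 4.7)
The plan is to analyze the action of $G := \pi_1(M)$ on the leaf space $X$ of $\mathcal{F}$ on the universal cover $\widetilde M$ and show that $G$ embeds into the isometries of an interval in $\R$, forcing $G$ to be trivial or cyclic. Passing to $\widetilde M$, the group $G$ acts freely, properly discontinuously, and isometrically, preserving $\mathcal{F}$; by Proposition~\ref{prop:plane_bundle}, the $C^{1,1}$ orthogonal curve $\gamma:\R\to\widetilde M$ meets each leaf exactly once, identifying $X$ with $\R$. Because each leaf is complete, totally geodesic, and separates $\widetilde M$ into two convex half-spaces, any distance-realizing geodesic between $L_{x_0}$ and $L_{x_2}$ must cross any intermediate $L_{x_1}$; this makes the leaf-to-leaf Riemannian distance $d_X$ additive and strictly monotonic along the parameter. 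The signed map $L_x\mapsto\pm d_X(L_0,L_x)$ therefore identifies $(X,d_X)$ isometrically with an interval $I\subseteq\R$, and the $G$-action descends to a homomorphism $\sigma: G\to\operatorname{Isom}(I)$.

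Next, $\sigma$ has trivial kernel: any element fixing every leaf fixes two distinct ones, and Lemma~\ref{lemma:two_fixed_planes} would then produce a split region, impossible under $M = M_{irred}$. Since $\widetilde M$ is Hadamard, $G$ is torsion-free and avoids the order-$2$ reflections in $\operatorname{Isom}(I)$. If $I$ is bounded or a half-line, $\operatorname{Isom}(I)\subseteq\Z/2$ gives $G = \{1\}$ immediately. Otherwise $I=\R$, and $\sigma$ embeds $G$ into the translation subgroup $(\R,+)$.

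The main obstacle, which I expect to be the hardest step, is to show $\sigma(G)\subseteq\R$ is discrete: proper discontinuity on $\widetilde M$ does not directly preclude small translations on $X$, since an isometry may translate leaves by a tiny Riemannian amount while displacing individual points far along them. The plan is to prove the orbit $GL_0$ is closed in $\widetilde M$, which is equivalent to closedness of $\sigma(G)$ in $\R$. Given $\sigma(g_n)\to c^*$, additivity of $d_X$ yields points $p_n\in L_{c^*}$ and $q_n \in g_nL_0$ realizing $d_X(L_{c^*}, g_nL_0)\to 0$, so $d(p_n, q_n) \to 0$; setting $s_n := g_n^{-1}(q_n) \in L_0$, the technical crux is to show that the $p_n$, hence the $s_n$, remain in bounded subsets. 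Using the uniqueness of the common perpendicular between two totally geodesic leaves in a Hadamard manifold, combined with the Lipschitz continuity of $\mathcal{F}$ (Lemma~\ref{lemma:cont_foliation}), one controls how the common-perpendicular feet on $g_nL_0$ vary as $c_n\to c^*$; a subsequential limit $s_n\to s^*$ then yields $g_n(s^*)\to p^*\in L_{c^*}$, and proper discontinuity forces the $g_n$ to be eventually constant, so $c^*\in\sigma(G)$. A closed countable subgroup of $\R$ is either $\{0\}$ or $c\Z$, so $\sigma(G)$, and hence $G$, is trivial or $\Z$.
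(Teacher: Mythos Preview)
Your overall architecture---let $G=\pi_1(M)$ act on the leaf space of $\mathcal{F}$, embed $G$ into $\operatorname{Isom}(\R)$, kill reflections by torsion-freeness, then prove discreteness---is exactly the paper's. The paper, however, parametrizes the leaf space not by the Riemannian leaf distance $d_X$ but by the invariant $A$ of Lemma~\ref{lemma:def_A}, and this is not a cosmetic choice: your $d_X$ can be genuinely degenerate. Take $h\equiv 1$ and $f\equiv 1$ in~\eqref{eqn:g_curv_homog}; this is locally irreducible by Lemma~\ref{lemma:g_complete}(e), yet in the $(x,y,z)$ coordinates of the proof of Lemma~\ref{lemma:g_complete} the $dx^2$ coefficient is $e^{-2u}$ with $u=y\cos x+z\sin x$, and one can push $u\to\infty$ uniformly over any fixed $x$-interval of length $<\pi$. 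Thus $d_X(L_{x_0},L_{x_1})=0$ for distinct leaves, your map $L_x\mapsto \pm d_X(L_0,L_x)$ is not injective, and an isometry permuting leaves within such a class lies in $\ker\sigma$ without fixing any single leaf, so Lemma~\ref{lemma:two_fixed_planes} no longer applies. The function $A$ avoids this because $A(L_{x_0},L_{x_1})>0$ whenever there is a leaf with $a\neq 0$ between them, which local irreducibility guarantees.

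Even granting a good coordinate on the leaf space, your discreteness sketch has a real gap. You correctly identify the crux---that an isometry can translate the leaf space by a tiny amount while moving every individual point far---but ``uniqueness of the common perpendicular plus Lipschitz continuity of $\mathcal{F}$'' does not bound the feet $p_n$: in the same $h\equiv 1$ example above, the leaves are asymptotic flats, the infimum distance is not attained, and any approximate perpendiculars escape to infinity. The paper handles this step by an entirely different, computational route: assuming non-discreteness, the orbit of a leaf is dense, which forces $a\neq 0$ everywhere (else $a\equiv 0$); then one has global smooth coordinates as in Proposition~\ref{prop:g_curv_homog}, and the explicit formulas $a=f(x)(\cosh u-h(x)\sinh u)^{-1}$ and $T(e_2(a))=a\,e_1(a)$ show that if $g_k(p_0)$ stays on leaves near $L_0$ but escapes in the leaf, then either $a$ or the isometry-invariant $e_2(a)$ must diverge at $g_k^{-1}(q_k)\in L_0$, contradicting continuity at $p_0$. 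You should either replace $d_X$ by $A$ and supply an analogous invariant-blow-up argument, or abandon the common-perpendicular idea for discreteness.
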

\begin{proof}
    We again use Lemma~\ref{lemma:def_A}.
    Specifically, pick a leaf $L_0$ of $\mathcal{F}$ of $\widetilde M$.
    Then define $A(L_p)$ by $\pm A(L_0, \mathcal{F}_p)$, choosing the positive sign on one side of $L_0$ and negative on the other.
    That $M$ is locally irreducible implies that $A$ is injective, i.e. no two distinct leaves map to the same value.

    Next, since $A$ does not depend upon the path used to compute it, $A(p,q)$ has the following property:
    for any $p_0, p_0', q_1, q_2 \in \widetilde M$ such that $\mathcal{F}_{p_0}$ has all of $p_0', q_1$, and $q_2$ on one side of it,
    \begin{align*}
        A(p_0, q_1) - A(p_0, q_2)
            &= (A(p_0, p_0') + A(p_0', q_1)) - (A(p_0, p_0') + A(p_0', q_1))\\
            &= A(p_0', q_1) - A(p_0', q_2).
    \end{align*}
    This equality then extends to all $p_0, p_0'$ by applying it twice $p_0, p_0''$ and then $p_0'', p_0'$ for some $p_0''$.

    This implies that $A(L_p)$ is equivariant under isometries:
    \begin{align*}
    A(g(L_p)) - A(g(L_q))
        &= A(L_0, g(L_p)) - A(L_0, g(L_q)) \\
        &= A(g(L_0), g(L_p)) - A(g(L_0), g(L_q)) \\
        &= A(L_0, L_p) - A(L_0, L_q) \\
        &= A(L_p) - A(L_q).
    \end{align*}
    Then $\pi_1(M)$ acts fixed-point freely on the image of $A$ since otherwise a non-trivial $g \in \pi_1(M)$ would fix $L_0$ which contradicts Lemma~\ref{lemma:no_fixed_nonsplit_regions}.
    Therefore if $\pi_1(M)$ is non-trivial, $A$ must be surjective on $\R$.
 
    Next, we want to show that $\pi_1(M)$ acts properly discontinously on $\R$ and hence is either trivial or $\Z$.
    Suppose not.
    Then the orbit of any $x \in \R$ under $\pi_1(M)$ is dense in $\R$.
    Hence, if some leaf $P$ has $a = 0$ on it, then $a = 0$ on all leaves, by continuity of $a$, and so $a = 0$ on all of $M$.
    This contradicts the assumption that $M$ is locally irreducible.
    So $a \not= 0$ on $M$.
    We may now assume without loss of generality that $a > 0$ on $M$.

    Hence we have a smooth foliation with coordinates $(x,u,v)$ as in Proposition~\ref{prop:g_curv_homog}.
    Fix some $p_0 \in \widetilde M$ with $p_0 \in L_0$ and call $p_0 = (0,0,0)$.
    By assumption, there are $g_k \in \pi_1(M)$ such that $g_k(p_0)$ are in leaves $L_k$ with $A(L_k) \rightarrow 0$ as $k \rightarrow \infty$.
    Since $\pi_1(M)$ acts properly discontinuously on $\widetilde M$, we must have that only finitely many of $p_k := g_k(p_0)$ are in any compact neighborhood of $p_0$.
    Let $q_k$ be the point on each leaf $L_k$ so that $q_k$ lies on the curve $(x,0,0)$, so then $q_k \rightarrow p_0$ as $k \rightarrow \infty$.
    Letting $(x_k, u_k, v_k) = p_k$, if $u_k \rightarrow \pm \infty$ as $k \rightarrow \infty$ (on any subsequence), then $g_k^{-1}(q_k)$ must have $u$-coordinate $-u_k$ and lies in $L_0$.
    But since we know the form of $a(x,u,v) = f(x)/(\cosh u - h(x) \sinh u)$, either $a \rightarrow 0$ or $a \rightarrow \infty$ for $u \rightarrow \infty$ and $x$ fixed.
    This is a contradiction since $a(0,-u_k,-v_k)$ must be the same as at $a(q_k)$ by the isometry $g_k$ and $a(q_k) \rightarrow a(p_0)$ which is non-zero and finite.

    Hence $u_k$ must be bounded.
    Then $v_k$ must diverge instead.
    As in \cite{graph_manifolds}, we note that
    \begin{equation*}
        T(e_2(a)) = e_2(T(a)) + [T,e_2](a) = (\nabla_T e_2 - \nabla_{e_2} T)(a) = a e_1(a)
    \end{equation*}
    and that $T(e_1(a)) = e_1(T(a)) + [T,e_1](a) = 0$.
    Hence $e_2(a) = a e_1(a) v + d$ for some $d$ with $d, a,$ and $e_1(a)$ all independent of $v$.
    Therefore $e_2(a)$ at $g_k^{-1}(q_k)$ must diverge as $k \rightarrow \infty$ since $u$ is bounded and $x=0$.
    But this means that $e_2(a)$ must diverge at $p_0$ since $q_k \rightarrow p_0$ and $e_2(a)$ is an isometry invariant up to sign.
    This gives a contradiction since $e_2(a)$ must be finite at any point.
    Hence $\pi_1(M)$ must actually act properly discontinuously on $\R$, and hence is trivial or $\Z$.
\end{proof}

Now we return to the Theorem~\ref{thm:pi1}.
Our strategy is to build an integer-valued Lyndon length function $N: \pi_1(M) \rightarrow \mathbb{N}$ \cite{lyndon} which can be thought of as an integer-valued version of $g \mapsto A(p, g(p))$.
Lemma~\ref{lemma:pi1_irreducible} covers the case where $M$ is locally irreducible everywhere, so we assume for the remainder of this section that $M$ is locally reducible at some points.
Moreover, we assume that $\widetilde M$ is irreducible, so $a \not= 0$ at some point.

\begin{definition}
    Fix a point $p_0 \in \widetilde M$ such that $a \not= 0$.
    Let $\mathcal{V}$ be a non-empty finite collection of connected components of $M_C$.
    We require that $\mathcal{V}$ must include the connected component that contains $p_0$.
    Let $\mathcal{L}_0$ be the set of boundary leaves of all $V \in \mathcal{V}$ and let $\mathcal{L} := \curly{g(L) |g \in \pi_1(M), \quad L \in \mathcal{L}_0}$ be the images of those leaves under the action of $\pi_1(M)$.
\end{definition}
\begin{definition}
    For a choice of $\mathcal{V}$, define $N: \pi_1(M) \rightarrow \N$ so that $N(g)$ is the number of leaves of $\mathcal{L}$ crossed by the geodesic from $p_0$ to $g(p_0)$.
\end{definition}
We will vary the choice of $\mathcal{V}$ and write $N_\mathcal{V}$ to clarify when necessary.
\begin{lemma}
    \label{lemma:def_N}
    $N(g)$ is finite.
\end{lemma}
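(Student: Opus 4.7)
The plan is to bound $N(g)$ using the pseudo-distance $A$ of Lemma~\ref{lemma:def_A} as a transversal measure for the foliation $\mathcal{F}$. Write $d := d(p_0, g(p_0))$ for the length of the geodesic $\gamma$ from $p_0$ to $g(p_0)$ in $\widetilde M$.

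First I observe that $\gamma$ crosses each leaf $L \in \mathcal{L}$ at most once, since each such leaf is a complete, totally geodesic plane in the Hadamard manifold $\widetilde{M}$ (this is the $\sec \le 0$ argument already used in the proof of Proposition~\ref{prop:plane_bundle}). By Proposition~\ref{prop:g_curv_homog}, each $V \in \mathcal{V}$ has at most two boundary leaves --- one for each finite endpoint of the coordinate interval $(a_1, a_2)$ of $V$ --- so each $\pi_1(M)$-translate $hV$ contributes at most two leaves to $\mathcal{L}$. Thus it suffices to show that for each $V \in \mathcal{V}$ only finitely many translates $hV$ are visited by $\gamma$.

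The key estimate is the bound on the total variation of $A$ along $\gamma$:
\[ \int_0^d \lvert a(\gamma(t))\langle \gamma'(t), e_2\rangle\rvert \, dt \;\le\; \bigl(\sup\nolimits_{\gamma}\lvert a\rvert\bigr) \cdot d \;<\; \infty, \]
where the integrand is interpreted as $0$ outside $M_C$, using $\lvert \langle \gamma', e_2\rangle\rvert \le 1$, smoothness of $a$ on $\widetilde M$ (from the end of Section~\ref{sec:preliminaries}), and compactness of $\gamma$. Let $w_V \in (0, \infty]$ denote the length of the coordinate interval of $V$. By the coordinate computation behind Remark~\ref{remark:A}, within each translate $hV$ the integrand equals $\lvert x'(t) \rvert$, so any subarc of $\gamma$ fully traversing $hV$ (entering through one boundary leaf and exiting through the other) contributes at least $w_V$ to the integral, regardless of any back-and-forth in the $x$-direction. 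Hence, for each $V$ with $w_V < \infty$, the number of $\pi_1(M)$-translates of $V$ fully traversed by $\gamma$ is at most $(\sup_\gamma \lvert a \rvert) \cdot d / w_V$, with at most two further partial visits (those containing $p_0$ or $g(p_0)$). For $V$ with $w_V = \infty$, each translate has at most one boundary leaf, so $\gamma$ cannot both enter and exit, leaving only the at most two translates containing $p_0$ or $g(p_0)$. Summing over the finite set $\mathcal{V}$ and recalling that each visited translate contributes at most two leaves to the crossing count yields $N(g) < \infty$.

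The step I expect to require the most care is making the Remark~\ref{remark:A} lower bound precise --- specifically, verifying that the integrand restricted to $\gamma \cap hV$ is indeed $\lvert x'(t)\rvert$ in the $(x,u,v)$-coordinates on $hV$ and that the total variation of $x$ bounds $w_V$ from below on a full traversal --- and carefully accounting for the partial-visit contributions at the endpoints of $\gamma$ so they do not disturb the $w_V$-quantization used for the main counting bound.
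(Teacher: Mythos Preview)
Your argument is correct and is essentially the paper's own proof: bound the number of $\pi_1$-translates of each $V \in \mathcal{V}$ fully traversed by $\gamma$ using that each full traversal contributes a fixed positive amount to the finite quantity $A(p_0, g(p_0))$, and then handle the at most two endpoint translates separately (you are in fact slightly more careful than the paper in treating the case where a $V$ has fewer than two boundary leaves). One small caveat on the step you already flagged: your $w_V$ must be taken as the $A$-width of $V$, i.e.\ the length of the $x$-interval in the coordinates~\eqref{eqn:coordinates_a_nonzero} used in Lemma~\ref{lemma:def_A} and Remark~\ref{remark:A}, not the $(a_1,a_2)$ interval of Proposition~\ref{prop:g_curv_homog}; the paper sidesteps this by defining its $\epsilon_V$ directly as the $A$-integral from one boundary leaf of $V$ to the other.
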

\begin{proof}
    Suppose $g$ is non-trivial.
    For each $V \in \mathcal{V}$, let $\epsilon_V =  \int \abs{a \chevron{\gamma', e_2}} dt > 0$ along any geodesic $\gamma$ from one boundary component of $V$ to the other.
    Then define $\epsilon = \min_{V \in \mathcal{V}} \epsilon_V$ so $\epsilon > 0$.
    Then we claim that 
    \[ \epsilon (N(g) - 2) \leq A(p_0, g(p_0)), \]
    and hence $N$ is finite.
    Let $\gamma$ be the geodesic from $p_0$ to $g(p_0)$.
    Consider the open regions where $a \not= 0$ that contain $g(p_0)$ for some $g \in \pi_1(M)$.
    Then $\gamma$ intersects some number of these.
    If it intersects $g(V)$ for some $V \in \mathcal{V}$, it must cross from one boundary to the other, unless $g(V)$ contains either of $p_0$ or $g(p_0)$.
    So $\gamma$ crosses $(N(g) - 2)/2$ such regions, with each crossing contributing at least $\epsilon$ to $A(p, g(p_0))$.
    Hence the inequality holds and $N$ is finite.
\end{proof}

\begin{lemma}
    \label{lemma:Lyndon_length}
    Define the overlap function $s(g,h) = \frac 1 2 \brak{N(g) + N(h) - N(gh^{-1})}$.
    The function $N: \pi_1(M) \rightarrow \Z$ is a Lyndon length function (see \cite{lyndon}) in the sense that it satisfies the following:
    \begin{enumerate}[(I)]
    \item $N(g) = 0$ iff $g$ is trivial,
    \item $N(g^{-1}) = N(g)$,
    \item $s(g,h) \geq 0$,
    \item $s(g,h) < s(g,\ell)$ implies that $s(h, \ell) = s(g,h)$, and 
    \item $s(g,h) + s(g^{-1}, h^{-1}) > N(g) = N(h)$ implies that $g = h$.
    \end{enumerate}
\end{lemma}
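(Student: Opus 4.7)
The plan is to view $\mathcal{L}$ as a pairwise disjoint family of separating walls in $\widetilde M$ and reduce each of (I)--(V) to a statement about the associated tree of chambers. The elements of $\mathcal{L}$ are distinct leaves of the foliation of $\widetilde M_{irred}$ given by Lemma~\ref{lemma:cont_foliation}, and so are pairwise disjoint; each is a complete totally geodesic plane in the simply connected non-positively curved $\widetilde M$, hence separates $\widetilde M$ into two half-spaces and is crossed by any geodesic at most once. Thus $N(g) = \#\{L \in \mathcal{L} : L \text{ separates } p_0 \text{ from } g p_0\}$, and a three-point XOR computation yields
\[
s(g,h) = \#\{L \in \mathcal{L} : L \text{ separates } p_0 \text{ from both } g^{-1}p_0 \text{ and } h^{-1}p_0\}.
\]
Defining the integer wall-pseudometric $d(p,q) := \#\{L \in \mathcal{L} : L \text{ separates } p, q\}$ on $\widetilde M$, the chambers of $\widetilde M \setminus \bigcup \mathcal{L}$ assemble into a tree $T$ on which $\pi_1(M)$ acts by isometries, and then $N(g) = d_T(v_0, g v_0)$ for $v_0$ the chamber of $p_0$, while $s(g,h) = (g^{-1}v_0 \mid h^{-1}v_0)_{v_0}$ is a Gromov product in $T$.

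The small axioms follow quickly. For (I), $N(g) = 0$ forces the geodesic $p_0 \to g p_0$ to cross no leaf of $\mathcal{L}$, so $g p_0$ remains in the non-split region $V_0$ containing $p_0$; hence $g(V_0) = V_0$, and Lemma~\ref{lemma:no_fixed_nonsplit_regions} (applicable since $V_0 \subsetneq \widetilde M$ by the standing assumption that $M$ has locally reducible points) forces $g = e$. For (II), reverse the geodesic $p_0 \to g p_0$ and apply the isometry $g^{-1}$; the $\pi_1(M)$-invariance of $\mathcal{L}$ preserves the wall count. For (III), $N(gh^{-1}) = d(g^{-1}p_0, h^{-1}p_0)$ via the isometry $g^{-1}$, so the triangle inequality for $d$ yields $s(g,h) \geq 0$. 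Axiom (IV), with $x = g^{-1}p_0$, $y = h^{-1}p_0$, $z = \ell^{-1}p_0$, is the standard ``ultrametric'' property of Gromov products in a tree: among $(x|y)_{p_0}$, $(x|z)_{p_0}$, $(y|z)_{p_0}$ the minimum is attained at least twice.

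Axiom (V) is the main point. I will list the walls crossed by the geodesics from $p_0$ to $g p_0$ and from $p_0$ to $h p_0$ in order as $L_1^g, \ldots, L_n^g$ and $L_1^h, \ldots, L_n^h$, and set $b = s(g^{-1}, h^{-1})$, $c = s(g, h)$. The first equality gives $L_i^g = L_i^h$ for $i \leq b$, while the identity $g \cdot L_i^{g^{-1}} = L_{n+1-i}^g$---a consequence of $g$ sending the geodesic $p_0 \to g^{-1}p_0$ to the reverse of $p_0 \to g p_0$---combined with $s(g,h) = c$ gives $L_i^g = g h^{-1} L_i^h$ for $i > n - c$. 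The hypothesis $b + c > n$ forces an index in the overlap, producing a wall $L := L_i^h$ with $g h^{-1}(L) = L$. Since $L$ lies in a non-split region $U$, and $gh^{-1}(U)$ is a connected component of $\widetilde M_{irred}$ meeting $U$ in $L$, we obtain $gh^{-1}(U) = U$, and Lemma~\ref{lemma:no_fixed_nonsplit_regions} then forces $g h^{-1} = e$.

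The principal obstacle is establishing the tree structure on chambers---the disjointness of walls (from the foliation property) makes this plausible, but one must verify that the wall-dual is genuinely a tree rather than a higher-dimensional median complex---and, for axiom (V), aligning the two orderings of walls along the two geodesics so that the overlap furnishes a wall fixed setwise by $g h^{-1}$, after which Lemma~\ref{lemma:no_fixed_nonsplit_regions} closes the argument cleanly.
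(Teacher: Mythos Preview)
Your proposal is correct and follows essentially the same route as the paper: both interpret $N$ and $s$ via leaves of $\mathcal{L}$ separating the relevant orbit points, and both close (I) and (V) by producing a leaf (hence a non-split region) invariant under the element in question and invoking Lemma~\ref{lemma:no_fixed_nonsplit_regions}. Your formula $s(g,h)=(g^{-1}p_0\mid h^{-1}p_0)_{p_0}$ is the same count the paper obtains (it uses the triangle $p_0,gp_0,hp_0$ instead, which simply swaps the roles of $s(g,h)$ and $s(g^{-1},h^{-1})$), and your argument for (V) via the identity $g\cdot L_i^{g^{-1}}=L_{n+1-i}^g$ is a clean repackaging of the paper's pigeonhole on leaves along $\gamma_h$.

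The one place you leave a flagged gap is (IV), where you defer to the ``tree'' structure of the chamber complex. The paper does not invoke a tree at all: it observes directly that if a leaf $L$ along $\gamma_g$ has $gp_0$ and $hp_0$ on the same side, then every \emph{earlier} leaf along $\gamma_g$ does too (since that earlier leaf separates $p_0$ from $L$, hence from $hp_0$). This ``initial segment'' property is exactly the non-crossing of disjoint totally geodesic planes in a Hadamard manifold, so your worry about a higher-dimensional median complex is unfounded---disjoint separating walls always give a tree, and the paper's one-line argument is all that is needed to fill your gap.
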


\begin{proof}
    Let $G := \pi_1(M)$.
    For (I), note that if $N(g) = 0$, then the geodesic from $p_0$ and $g(p_0)$ must never reach a point with $a = 0$.
    Then the subgroup generated by $\chevron{g} \subset G$ must leave $V$ invariant, where $V$ is the non-split region containing $p_0$.
    So Lemma~\ref{lemma:no_fixed_nonsplit_regions} implies that $g$ is trivial.

    For (II), if $\gamma$ is the geodesic from $p_0$ to $g(p_0)$, then $g^{-1}(\gamma)$ is the geodesic from $g^{-1}(p_0)$ to $p_0$ and $\mathcal{L}$ is isometry invariant.

    For (III) and (IV), we give $s(g,h)$ a geometric interpretation.
    Observe that $s(g,h)$ counts the number of leaves of $\mathcal{L}$ that lie on both the geodesic from $p_0$ to $g(p_0)$ and the one from $p_0$ to $h(p_0)$.
    To see this, first observe that $N(gh^{-1})$ is equal to the number of leaves of $\mathcal{L}$ crossed by the geodesic from $h(p_0)$ to $g(p_0)$, since $h$ acts by an isometry.
    Assume that $g$ and $h$ are non-trivial and distinct.
    Consider all the leaves crossed by any of the three geodesics between $p_0, g(p_0)$ and $h(p_0)$.
    Each of these leaves divides $\widetilde M$ into two connected components, with two of $\{p_0, g(p_0), h(p_0)\}$ on one side and one of the other.
    None of the leaves can have all three points on one side, since it crosses the geodesic between two of the points.

    Any leaf with $p_0$ and $g(p_0)$ on the same side contributes $+1$ to the $N(g)$ term and $-1$ to the $N(gh^{-1})$, and hence contributes 0 to $s(g,h)$.
    Similarly, for leaves with $p_0$ and $h(p_0)$ on the same side.
    For leaves with $g(p_0)$ and $h(p_0)$ on the same side, these contribute $+1$ to $N(g)$ and $+1$ to $N(h)$, so account for $+1$ to $s(g,h)$.
    Then $s(g,h)$ is the number of leaves with $g(p_0)$ and $h(p_0)$ on one side and $p_0$ on the other, which equals the number of leaves that intersect both geodesics from $p_0$ to $g(p_0)$ and $h(p_0)$.
    Lastly note that if $g$ or $h$ are trivial, then $s(g,h) = 0$, and if $g = h$ then $s(g,h) = N(g) = N(h)$.
    Hence $s$ is non-negative and (III) holds.

    For (IV), let $\gamma_g$, $\gamma_h$, and $\gamma_\ell$ be the geodesics from $p_0$ to $g(p_0), h(p_0)$ and $\ell(p_0)$, respectively.
    See Figure~\ref{fig:Lyndon_length}.
    We consider the leaves intersecting $\gamma_g$ as ordered from closest to $p_0$ to furthest.
    Note that if one leaf $L \in \mathcal{L}$ intersects both $\gamma_g$ and $\gamma_h$, then all earlier leaves intersecting $\gamma_g$ must also intersect $\gamma_h$.
    This follows since $L$ has $g(p_0)$ and $h(p_0)$ on one side and the earlier leaves on the other, so all earlier leaves also have $g(p_0)$ and $h(p_0)$ on the same side.
    Hence the first $s(g,h)$ leaves on $\gamma_g$ must also intersect $\gamma_h$ and the first $s(g,\ell)$ intersect $\gamma_\ell$.
    Since $s(g,h) < s(g,\ell)$, all leaves intersecting both $\gamma_g$ and $\gamma_h$ also intersect $\gamma_\ell$ and so $s(g,h) \leq s(h, \ell)$.
    Since $s(g,h) \not= s(g,l)$, the $s(g,h)+\nth{1}$ leaf intersecting $\gamma_g$ must intersect $\gamma_\ell$ but not $\gamma_h$.
    Hence the $s(g,h)+\nth{1}$ leaf on $\gamma_\ell$ intersects $\gamma_g$ but not $\gamma_h$ and so $s(h,\ell) < s(g,h) + 1$ which gives (IV).

    \begin{figure}
    \centering
    \includegraphics[scale=0.32]{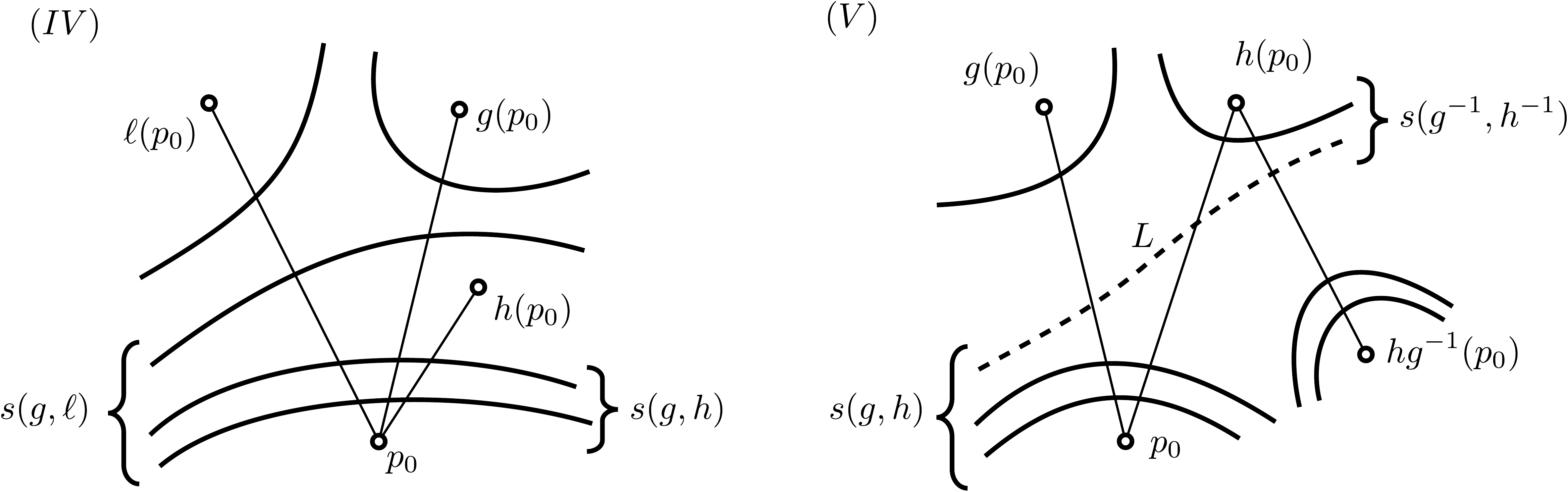}
    \caption{Left, diagram of property (IV) showing possible leaves of $\mathcal{L}$ (thick lines) and the geodesics from the point $p_0$. Right, diagram of property (V) showing the leaf $L$ (dashed line) intersecting all three geodesics.}
    \label{fig:Lyndon_length}
    \end{figure}

    For property (V), assume that $g,h \in \pi_1(M)$ satisfy $s(g, h) + s(g^{-1}, h^{-1}) > N(g) = N(h)$.
    Note that $s(g,h)$ counts the number of leaves of $\mathcal{L}$ that intersect both the geodesics $\gamma_g$ and $\gamma_h$ from $p_0$ to $g(p_0)$ and $h(p_0)$.
    By applying the isometry $h$ first, we see that $s(g^{-1}, h^{-1})$ counts the number of leaves intersecting both the geodesic from $h(p_0)$ to $p_0$ and the geodesic from $h(p_0)$ to $h g^{-1}(p_0)$.
    So these both count leaves intersecting the geodesic $\gamma_h$.
    The total number of leaves of $\mathcal{L}$ interescting $\gamma_h$ is $N(h) < s(g,h) + s(g^{-1}, h^{-1})$.
    Let $\gamma_{hg^{-1},h}$ be the geodesic from $hg^{-1}(p_0)$ to $h(p_0)$.
    By the pigeonhole principle, at least one leaf $L$ intersects all three of $\gamma_g, \gamma_h$ and $\gamma_{h,hg^{-1}}$.
    In particular, $L$ can be taken to be the $s(g,h)$th leaf along the $\gamma_g$ and $\gamma_h$.
    Moreover, it is the $N(h) - s(g,h)+1$st leaf going backwards along $\gamma_h$ and therefore also the $N(h) - s(g,h)+1$st leaf going backward along $\gamma_{hg^{-1}, h}$.
    Since $N(g) = N(h)$, it is also the $N(h) - s(g,h)+1$st leaf going backwards along $\gamma_g$.
    Since the isometry $h g^{-1}$ takes $\gamma_g$ to $\gamma_{hg^{-1}, h}$, it takes $L$ to $L$.
    Then Lemma~\ref{lemma:no_fixed_nonsplit_regions} implies that $g^{-1}h$ is trivial.
\end{proof}

If $g \in \pi_1(M)$ is such that $N(g^2) \leq N(g)$, then we call $g$ \emph{non-Archimedean}.
If $g$ is non-Archimedean, then we define 
\[ \mathcal{N}_x := \curly{y \in \pi_1(M): N(x y^{-1}) \leq N(x) = N(y)} \cup \{e\} \]
where $e$ is the identity element.
By~\cite{lyndon}, $\mathcal{N}_x$ satisfies the following properties:
\begin{enumerate}
    \item $\mathcal{N}_x$ is a group all of whose elements are non-Archimedean,
    \item either $\mathcal{N}_x = \mathcal{N}_y$ or $\mathcal{N}_x \cup \mathcal{N}_y = \{e\}$, and
    \item $y \in \mathcal{N}_x$ and $x \not= y$ implies that $s(x,y) = \frac 1 2 N(x) = \frac 1 2 N(y)$
\end{enumerate}

The main result, Theorem 7.1, of ~\cite{lyndon}, states that for any group $G$ with a Lyndon length function, there exists a decomposition of $G$ as the free product of subgroups that are  either
\begin{enumerate}
    \item $\mathcal{N}_x$ for a non-Archimedian $x \in G$, or
    \item an infinite cyclic subgroup generated by an Archimedean $x \in G$.
\end{enumerate}

Considering $N$ as a discrete approximation of $g \mapsto A(p_0, gp_0)$, the next lemma classifies analogs of the subgroups $\mathcal{N}_x$ using $A$ instead of $N$.
We will then refine $N$ repeatedly to approximate $A$ sufficiently.
\begin{lemma}
    \label{lemma:nonarchimedean_free}
    Suppose that $G \subset \pi_1(M)$ is such that for every non-trivial $g,h  \in G$, 
    \[ A(p_0, gp_0) = A(p_0, hp_0). \]
    Then $G$ is a free group.
\end{lemma}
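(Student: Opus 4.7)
I plan to find a $G$-invariant leaf of $\mathcal{F}$ or connected component of $\widetilde{M}_{split}$ at $A$-pseudodistance $L/2$ from the leaf $L_0 := \mathcal{F}_{p_0}$, where $L := A(p_0, g p_0)$ for any non-trivial $g \in G$, then finish using Lemma~\ref{lemma:split_region_free} or Lemma~\ref{lemma:no_fixed_nonsplit_regions}. Note $L_0 \subset M_C$ by Lemma~\ref{lemma:c_stays_nilpotent}(b). First, I dispose of $L = 0$: any geodesic from $p_0$ along which $|a\langle \gamma', e_2\rangle|$ integrates to zero must remain tangent to $L_0$ in a neighborhood of $p_0$ (since $a \neq 0$ there) and hence stay in $L_0$ everywhere, so $G$ preserves the strict subset $L_0 \subset \widetilde{M}_{irred}$; Lemma~\ref{lemma:no_fixed_nonsplit_regions} then forces $G = \{e\}$, which is free. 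So assume $L > 0$.

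The main construction is a midpoint object common to all geodesics $\gamma_g$ from $p_0$ to $g p_0$ for non-trivial $g \in G$. For distinct non-trivial $g, h \in G$, isometry invariance of $A$ gives $A(L_0, gL_0) = A(L_0, hL_0) = A(gL_0, hL_0) = L$. Running the bookkeeping of properties (III)--(IV) of Lemma~\ref{lemma:Lyndon_length} with the continuous $A$-pseudometric in place of $N$ (using Remark~\ref{remark:A} to split $A$ additively along the geodesic triangle $p_0, g p_0, h p_0$), the leaves of $\mathcal{F}$ crossed by $\gamma_g$ at $A$-value in $[0, L/2]$ are precisely those crossed by $\gamma_h$, and the terminal structure at $A$-value exactly $L/2$ is either a single common leaf $L^*$ or a single common connected component $U^*$ of $\widetilde{M}_{split}$ through which both $\gamma_g$ and $\gamma_h$ pass. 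This object depends only on $G$ and $p_0$.

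Next I establish $G$-invariance. Fix non-trivial $g_0 \in G$ and any $g_1 \in G$ with $g_1 \neq e$ and $g_0 g_1 \neq e$. The geodesics $\gamma_{g_0 g_1}$ and $g_0 \gamma_{g_1}$ share the endpoint $g_0 g_1 p_0$ and each has $A$-length $L$. Reversing both and rerunning the midpoint construction from the $g_0 g_1 p_0$ end shows that the last $L/2$ of the $A$-structure of these two geodesics is shared, terminating at $L^*$ on $\gamma_{g_0 g_1}$ and at $g_0 L^*$ on $g_0 \gamma_{g_1}$ (respectively $U^*$ and $g_0 U^*$ in the split case). Uniqueness of the terminal shared object gives $g_0 L^* = L^*$ (resp.\ $g_0 U^* = U^*$).

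Finally, in the split-region case $G$ preserves $U^*$, which is flanked by orbit leaves and hence has boundary, so Lemma~\ref{lemma:split_region_free} concludes that $G$ is free. In the leaf case, $G$ preserves $L^*$; since $L^*$ is the midpoint of $\gamma_g$ it separates $L_0$ from $gL_0$, so every nontrivial $g$ swaps the two sides of $L^*$. Then $g^2$ preserves the sides; but if $g^2 \neq e$ the universal midpoint property applied to $g^2$ gives that $L^*$ also separates $L_0$ from $g^2 L_0$, contradicting side-preservation. Hence $g^2 = e$, and absence of torsion in the fixed-point-free $\sec \leq 0$ action forces $g = e$; so the leaf case only arises with $G = \{e\}$, which is vacuously free. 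The main obstacle is the existence and uniqueness of the terminal shared structure at $A = L/2$: one must carefully handle both the case that the value $L/2$ is attained along a whole interval of geodesic inside a split region and the case that it is attained at a single leaf, and show that no further branching into distinct leaves or distinct split regions can occur there.
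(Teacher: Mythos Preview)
Your proposal is correct and follows essentially the same approach as the paper: locate a $G$-invariant object at $A$-distance $L/2$ from $p_0$ and invoke Lemma~\ref{lemma:split_region_free}. The paper makes the construction more concrete by naming the three boundary leaves $L_{p_0}, L_{gp_0}, L_{hp_0}$ of a split region $U$ and computing directly that $x=y=z=L/2$; your version recovers the same $U$ as the ``terminal shared structure'' at $A$-value $L/2$, and your $G$-invariance step (basepoint shift to $g_0 g_1 p_0$) is equivalent to the paper's ``replacing $p_0$ with $gp_0$'' sentence.

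Two remarks. First, your leaf case $L^*$ is actually vacuous once $L>0$: a single leaf separates $\widetilde M$ into only two pieces, so three orbit points $p_0, gp_0, hp_0$ lying in pairwise distinct $\mathcal{F}$-directions force the branching to occur across a split region, never at an interior leaf of a non-split region (this is what the paper's three-leaf construction encodes). Even if the case did arise, you could skip the side-swapping argument entirely and apply Lemma~\ref{lemma:no_fixed_nonsplit_regions} directly to the invariant leaf $L^*$. Second, the ``main obstacle'' you flag at the end --- existence and uniqueness of the shared midpoint structure --- is precisely the $x=y=z=L/2$ computation the paper carries out, and your write-up would be stronger if you spelled it out rather than deferring it to ``running the bookkeeping''.
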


\begin{proof}
    Note that for any $g,h \in G$, $A(p_0, gp_0) = A(p_0, hp_0)$ implies that the leaf $\mathcal{F}_{gp_0}$ must have $p_0$ and $hp_0$ on the same side.
    Thefore there must be a split region $U \subset \widetilde M_{split}$ such that $\widetilde M \setminus U$ has $p_0, gp_0,$ and $h p_0$ on three separate connected components.
    To see this, take $U$ to be the split region with the following three leaves of $\mathcal{F}$ in its boundary:
    \begin{itemize}
        \item $L_{p_0}$, the last leaf along the geodesic from $p_0$ to $gp_0$ which has both $gp_0$ and $hp_0$ on the same side,
                (which is therefore also the last leaf from $p_0$ to $hp_0$ with this property),
        \item $L_{gp_0}$, the last leaf from $gp_0$ to $p_0$ which has both $p_0$ and $hp_0$ on one side (which is therefore also the last leaf from $gp_0$ to $h p_0$ with this property), and
        \item $L_{hp_0}$, the last leaf from $hp_0$ to $p_0$ which has both $p_0$ and $gp_0$ on one side (which is therefore also the last leaf from $hp_0$ to $gp_0$ with this property).
    \end{itemize}
    Then each of $p_0$, $gp_0$ or $hp_0$ is separated from $U$ by the corresponding boundary leaf.
    Moreover, this is the unique $U$ to have this property, since any other split region is contained entirely in one component of $\widetilde M \setminus U$ and therefore has two of $p_0, gp_0,$ and $hp_0$ on one side.

    Let $x = A(p_0, L_{p_0})$, $y = A(gp_0, L_{gp_0})$, and $z = A(hp_0, L_{hp_0})$.
    By construction of the leaves, note that $A(p_0, gp_0) = x+y$, $A(p_0, hp_0) = x + z$ and $A(gp_0, hp_0) = y+z$.
    By the assumption on $G$, and the fact that $A$ is invariant under $g$, we have $x+y = x+z = y+z$.
    Therefore $x=y=z = \frac 1 2 A(p_0,gp_0)$.
    So $L_{p_0}$ is independent of $g$ and $h$, and therefore $U$ is independent of $g$ and $h$, depending only upon $p_0$.
    Replacing $p_0$ with $gp_0$, we see that $U$ is determined only by the orbit $Gp_0$.

    Therefore the action of $G$ on $\widetilde M$ must stabilize $U$.
    By Lemma~\ref{lemma:split_region_free}, $G$ must then be a free group.
\end{proof}

\begin{figure}
    \includegraphics[scale=0.3]{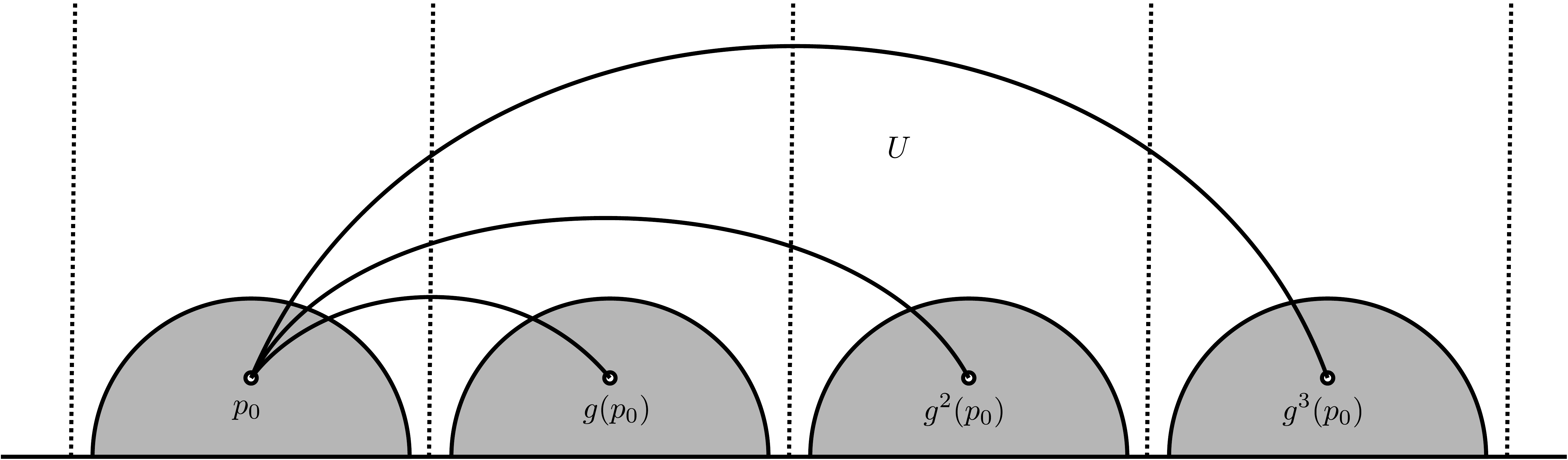}
    \caption{Examples of non-Archimedean isometries.  Consider the upper half-plane model of $\mathbb{H}^2 \times \R$, modified to have non-split regions in gray.  The isometry $g$ acts by translation.  Using $\mathcal{V}$ containing just the shaded gray regions, we can see that $N_{\mathcal{V}}(g) = N_{\mathcal{V}}(g^2) = 2$ by tracing the geodesics from $p_0$ (solid lines).  In this case the split region $U$ is fixed by $g$.  If instead there were also non-split regions added at the dashed lines, then there would be no such fixed region, but $A(p_0, gp_0)$ no longer equals $A(p_0, g^2p_0)$.  In this case, we then include these non-split regions in $\mathcal{V}$ so that $g$ is no longer non-Archimedean.}
    \label{fig:nonArchimedean}
\end{figure}

\begin{proof}[Proof of Theorem~\ref{thm:pi1}]
The strategy is to apply the result of \cite{lyndon}, Theorem 7.1, described above, to $N_{\mathcal{V}}$, for an appropriate choice of $\mathcal{V}$.
We start with the smallest possible $\mathcal{V}$ and then add to it to further refine the free decomposition of $\pi_1(M)$, until all subgroups in the free decomposition are themselves free.
As motivation of the following strategy, see Figure~\ref{fig:nonArchimedean}, which gives examples of non-Archimedean isometries.

Let  $\mathcal{V}$ be $\curly{gV: g \in \pi_1(M)}$ where V is the component of $M_C$ that contains $p_0$.
Applying the theorem of Lyndon gives a decomposition of $G = \pi_1(M)$ as a free product of groups $G_1, \ldots, G_k$.
If $G_i$ is a subgroup which is the cyclic group generated by an Archimedean element, then $G_i$ is free since $\pi_1(M)$ is torsion-free.
If not, then $G_i$ is $\mathcal{N}_x$ for some non-Archimedean $x \in G$.
If that $G_i$ has $A(p_0, gp_0) = A(p_0, hp_0)$ for all $g,h \in G_i$, then by Lemma~\ref{lemma:nonarchimedean_free}, it too is free.

If not, then take $g, h \in G_i$ with $A(p_0, gp_0) \not= A(p_0, hp_0)$.
For each connected component $W$ of $M_C$, let $m$ be the number of its images under $\pi_1(M)$ which intersect the geodesic from $p_0$ to $gp_0$ and $n$ be the number that intersect the geodesic from $p_0$ to $hp_0$.
Since $A(p_0, gp_0) \not= A(p_0, hp_0)$, there must be at least one $W$ with $n \not= m$.
We then take $\mathcal{V}' = \mathcal{V} \cup \curly{gW: g \in \pi_1(M)}$.
Then $N_{\mathcal{V}'}(g) \not= N_{\mathcal{V}'}(h)$.
Therefore $G_i$ is not a non-Archimedean subgroup for $N_{\mathcal{V}'}$.

We can then apply the result of \cite{lyndon} again to $G_i$ using $N_{\mathcal{V}'}$ to get that either $G_i$ is free or a free product of at least two non-trivial subgroups.
We can proceed recursively on these subgroups if necessary.
This recursion must stop after finitely many steps, since $\pi_1(M)$ is finitely generated and therefore is the free product of at most finitely may subgroups, by the Grushko theorem.
Therefore $\pi_1(M)$ is a free product of free groups and so itself is free.

Conversely, any countable free group can be achieved as the fundamental group of some $M$ with Ricci eigenvalues $(-1,-1,0)$.
First, recall that every countably generated free group is a subgroup of $F_2$, the free group with two generators.
Then it suffices to show that $\pi_1(M) = F_2$ is possible.
We can construct such a manifold by taking a subset $U$ of $\mathbb{H}^2 \times \R$ with four boundary components $P_1, \ldots, P_4$ that are totally geodesic planes.
Then take any two non-split regions $V_1, V_2$ with two boundary planes each and $a \rightarrow 0$ to infinite order on these boundary planes (and $h = 0$) constructed by Theorem~\ref{thm:irreducible_examples}.
Glue the two boundaries of $V_1$ to $P_1$ and $P_2$ and the two boundaries of $V_2$ to $P_3$ and $P_4$.
Then $M$ deformation retracts onto a wedge of two circles and $\pi_1(M) = F_2$.
\end{proof}

\begin{example}
There is a $\Z$ action on any metric of the form \eqref{eqn:g_curv_homog} if $f,h$ are periodic of the same period.
Then the $\Z$ action is just by translation in $x$ by the period of $f$ and $h$, and $M$ is locally irreducible everywhere if $f$ is never zero in a neighborhood.
\end{example}

\begin{example}
Note that the assumption in Theorem~\ref{thm:pi1} that $\widetilde M$ is irreducible is necessary.
For example, $\Z \times \Z$ acts on the product metric $\mathbb{H}^2 \times \R$ with one $\Z$ acting on each factor, or a surface group can act on the $\mathbb{H}^2$ factor.
\end{example}

\printbibliography

\end{document}